\documentclass{article}
\usepackage{floatrow}
\floatsetup[table]{capposition=top}
\newfloatcommand{capbtabbox}{table}[][\FBwidth]
\usepackage[pdftex,
bookmarksnumbered,
bookmarksopen,
colorlinks,
citecolor=blue,
linkcolor=blue,]{hyperref}
\usepackage{amsmath}
\usepackage{amssymb}
\usepackage{mathrsfs}
\usepackage{amsfonts}
\usepackage{amsthm}
\usepackage{booktabs}
\usepackage{listings}
\usepackage{boxedminipage}
\usepackage{algorithmic}
\usepackage{stmaryrd}
\usepackage{cite}
\usepackage{relsize}
\usepackage{pdfpages}

\usepackage[top=1.3in, bottom=1.5in, left=1.5in, right=1.5in]{geometry}

\newtheorem{remark}{Remark}[section]
\newtheorem{example}{Example}[section]

\newtheorem{assumption}{Assumption}[section]

\newtheorem{proposition}{Proposition}[section]
\newtheorem{theorem}{Theorem}[section]
\newtheorem{lemma}{Lemma}[section]


\newenvironment{mytabular}{\bgroup\tiny\tabular}{\endtabular\egroup}

\addtolength{\oddsidemargin}{-0.1 \textwidth}
\addtolength{\textwidth}{0.2 \textwidth}
\addtolength{\topmargin}{-0.1 \textheight}
\addtolength{\textheight}{0.2 \textheight}


\title{Efficiently Maximizing a Homogeneous Polynomial over Unit Sphere without Convex Relaxation 
}
\author{Yuning Yang\thanks{College of Mathematics and Information Science, Guangxi University, Nanning, 530004, China  (yuning.yang1207@gmail.com).} \and
	Guoyin Li\thanks{Department of Applied Mathematics, University of New South Wales, Sydney 2052, Australia
		(g.li@unsw.edu.au).}                                                              
}
 

\begin{document} 
\maketitle

\begin{abstract}
	This work studies the problem of maximizing a higher degree real homogeneous multivariate polynomial over the unit sphere. This problem is equivalent to finding the leading  eigenvalue  of the associated symmetric tensor of higher order, which is nonconvex and NP-hard. Recent advances show that semidefinite relaxation is quite effective to find a global solution. However, the solution methods involve full/partial eigenvalue decomposition during the iterates, which heavily limits its efficiency and scalability. On the other hand, for odd degree (odd order) cases, the order has to be increased to even, which potentially reduces the efficiency.
	
To find the global solutions, instead of convexifying the problem, we equivalently reformulate the problem as a nonconvex matrix program based on an equivalence property between symmetric rank-1 tensors and matrices of any order, which is a generalization of the existing results.  
	The program is directly solved by a vanilla alternating direction method, which only involves the computation of leading eigenvalue/singular value of certain matrices, benefiting from the special structure of the program.
	Although being nonconvex, under certain hypotheses,  it is proved that the algorithm    converges  to a   leading eigenvalue of the associated tensor.   Numerical experiments on different classes of tensors demonstrate that the proposed approach has a significant improvement in   efficiency and scalability, while it can keep the effectiveness of semidefinite relaxation as much as possible. For instance, the proposed method   finds the leading  eigenpair of a third-order $500$ dimensional Hilbert tensor in a personal computer within $100$ seconds.

\noindent {\bf Key words: }polynomial;  tensor;  rank-$1$ approximation;    eigenvalue;  nonconvex\\
\noindent {\bf  AMS subject classifications.} 90C22, 15A18, 15A69, 41A50
\hspace{2mm}\vspace{3mm}

\end{abstract}

\section{Introduction}

Polynomial optimization receives much attention in recent years due to its ability of modeling problems arising from signal processing, engineering, material science and so on. Owing to its nonconvexity and NP-hardness in general, designing effective and efficient algorithms is a challenge. This paper is focused on solution methods for finding global optimizers to a class of   polynomial optimization, that is to maximize a real homogeneous multivariate polynomial of degree higher than three over a unit sphere. This class of problems is a hot topic and is of  importance,   because of its large connectivity with numerous real-world applications, and because   its special structure allows one to study it by exploiting its related tensor form. As a result, researchers from communities of optimization and   linear/nonlinear algebra have devoted their efforts to study it over the past decades.

 Given a coordinate system, a real $d$-th order $n$-dimensional tensor (hypermatrix)  $\mathcal A$ is a multi-way array consisting of $n^d$ entries  $\mathcal A_{i_1 i_2\cdots i_d}$ where every $i_j$ varies from $1$ to $n$.  A $2$-nd order tensor is a matrix, whereas a $1$-st order one is a vector.   $\mathcal A$ is symmetric if each entry is invariant under any permutation of its indices. As a    homogeneous quadratic form is uniquely corresponding to a matrix, a degree $d$ homogeneous polynomial uniquely determines a $d$-th order symmetric tensor as well; see, e.g., \cite{he2010approximation}. 
 From the tensor point of view, the maximal value of a real homogeneous multivariate polynomial over a unit sphere is equivalent to   the leading Z-eigenvalue or $\ell^2$-eigenvalue of the associated symmetric tensor\footnote{In what follows, without any confusion, we omit the prefix and simply call it the eigenvalue of a tensor.}, which was defined independently by Qi \cite{qi2005eigenvalues} and Lim \cite{lim2005singular}. When $d=2$, they reduce exactly to the matrix eigenvalues. Such problem finds applications  in independent component analysis \cite{comon1994independent},   quantum entanglement \cite{hu2015computing}, maximum-clique problems \cite{bomze2005quartic}, Bose-Einstein condensates \cite{hu2016note},     tensor decompositions \cite{wang2007successive} and tensor completion \cite{yang2015rank}, just to name a few; it also connects closely to tensor rank-1 approximation \cite{kolda2010tensor} and the computation of tensor spectral norm \cite{lim2014blind}.
Despite being commonly seen and applied, unfortunately, when $d\geq 3$, solving such a problem is   NP-hard in general \cite{he2010approximation,hillar2013most}. 

Several efforts have been made    to tackle this problem. The   power method and its generalizations has been generalized to higher-order tensors \cite{kofidis2002on,de2000on,kolda2011shifted,kolda2014adaptive}.   Methods  based on first- and second- order information have been developed in \cite{yu2016adaptive,jaffe2018newton,chang2016computing}. An alternating direction method of multipliers has been studied in \cite{jiang2014alternating}. \cite{che2017neural} proposed to solve the problem via neural networks. By exploring the link between a   polynomial and the related multilinear form, a maximum block improvement method was proposed in \cite{chen2012maximum}. Theoretically, the above methods are guaranteed to find a stationary point, namely, an eigenvalue of the associated tensor, but  not sure the leading one. Another category of methods aims at computing an approximation solution with a theoretically guaranteed lower bound \cite{ling2009biquadratic,luo2010semidefinite,he2010approximation,he2012probability,so2010deterministic,nie2012sum,zhou2012nonnegative,zhang2012cubic}, just to name a few. On the other side, methods for finding all eigenvalues have been proposed \cite{cui2014all,chen2016computing} when the size of the problem is not large. 

As a special instance of the general polynomial optimization problems, maximizing a homogeneous polynomial over the unit sphere can of course be solved by using the sum of squares (SOS) relaxation, based on  a hierarchy semidefinite programming (SDP) relaxation with increasing size \cite{l1-2001,parrilo2003semidefinite}. Although global solutions can be achieved in theory, the size of the SDPs to be solved grows exponentially fast in the size of the problem, resulting in computational inefficiency and inscalability. Tailored to the problem under consideration,  a method based on sequential SDP has been studied in \cite{hu2013finding} when $d$ is even. Since it is also an SOS relaxation method, the limitations of \cite{l1-2001,parrilo2003semidefinite} are inherited. In \cite{qi2009z},   Z-eigenvalue methods were developed to find   global solutions when the dimension is not very large. Different from the existing approaches, Jiang et al. \cite{jiang2015tensor} proposed to solve the problem   via a single SDP relaxation. Specifically, by revealing an equivalence link between symmetric rank-1 tensors and matrices, they proposed an SDP relaxation model, which was then solved by the alternating method of multipliers (ADMM). Although being a  relaxation problem, empirically it was almost always observed that the relaxation is tight, namely, solving the relaxation problem often yields global solutions to the problem under consideration. Based on the SOS representation, an SDP relaxation   was also proposed by Nie and Wang \cite{nie2014semidefinite}, which was solved by the   Matlab package SDPNAL \cite{zhao2010newton}. Although appearing in totally different formulations,   it was pointed out in \cite{hu2016note} that the SDPs in  \cite{jiang2015tensor} and \cite{nie2014semidefinite} are essentially the same. Benefiting from only solving a single SDP of fixed size, both \cite{jiang2015tensor} and \cite{nie2014semidefinite} are capable of dealing with problems of larger size than methods in \cite{l1-2001,parrilo2003semidefinite,hu2013finding}.

\paragraph{Motivation} Our study is motivated by two limitations of \cite{jiang2015tensor,nie2014semidefinite}. Firstly, although approaches in \cite{jiang2015tensor,nie2014semidefinite} are effective to find the global solutions, it is known that SDP is relatively more suitable for small and moderate size problems, while in the current setting, for instance,    a $4$-th order $n$-dimensional tensor   results in an SDP of size $O(n^2\times n^2)$, which becomes unsolvable when $n$ increases, say, $n\geq 100$. On the other hand,   the algorithm of \cite{jiang2015tensor} involves eigenvalue decomposition (EVD) at each iterate, which is of theoretically computational complexity $O(n^6)$ for the aforementioned size tensors. Similar phenomena happen  to the solver used in \cite{nie2014semidefinite}. As a result, these observations   affect the efficiency and scalability of the approaches of \cite{jiang2015tensor,nie2014semidefinite}. Secondly, both approaches in \cite{jiang2015tensor,nie2014semidefinite} are naturally designed for tensors of order even, while to deal with odd order cases, the order has to be augmented to even such that SDP  can be applied. This may   reduce   efficiency.

\paragraph{Contribution of this work} The goal is to improve the efficiency and scalability of the above two approaches for tensors of any order, while to keep the effectiveness as much as possible. To achieve this, we first generalize the equivalence property between rank-1 tensors and matrices developed in \cite{jiang2015tensor} to tensors of any order, which serves as a cornerstone of our approach. Based on this property, the problem of interest is   equivalently formulated as a matrix program with a matrix rank-1 constraint for tensors of any order. Such an  optimization model has the property that every stationary point, if existing, yields a global solution to the original problem.  In view of it, instead of solving its convex relaxation, we directly solve this nonconvex matrix program by using a vanilla ADMM. In contrast to performing an EVD, the main computation of each iterate involves   finding the leading (largest) eigenvalue /singular value  of a certain matrix,  whose computational complexity in theory is of $O(n^d)$ only, i,e., it is linear to the size of the data tensor.   Under a hypothesis on the dual variable, it is shown that the algorithm   converges to a \emph{global optimizer} of this nonconvex program, namely, a leading eigenvalue of the associated tensor. Moreover, to some extent, the algorithm can itself identify whether the result is globally optimal. It is then shown that the  hypothesis on the dual variable is closely related to the tightness of the SDP relaxation, which is not an isolated phnomenon.

Numerical experiments have demonstrated that the proposed approach has a large improvement concerning the efficiency and scalability, and can keep the effectiveness of convex relaxation for most tensors, especially for structured tensors. Our Matlab code is available online for public use.

The remainder is organized as follows. Tensor operations are introduced in Sect. \ref{sec:multilinear-algebra}. The nonconvex matrix program to be studied is formulated in Sect. \ref{sec:prob}, while its properties along with solution methods are investigated in Sect. \ref{sec:alg}. Numerical results will be presented in Sect. \ref{sec:numer}. Sect. \ref{sec:conc} draws some conclusions and remarks.

 \subsection{Notations and Tensor Operations}\label{sec:multilinear-algebra}
Vectors are written as boldface lowercase letters $(\mathbf x,\mathbf y,\ldots)$, matrices
correspond to italic capitals $(A,B,\ldots)$, and tensors are
written as calligraphic capitals $(\mathcal{A}, \mathcal{B},
\cdots)$. $\mathbb R$   denotes  the real   field.  $\mathbb R^{m\times n}$ denotes real matrices of dimension $m\times n$ and $\mathbb S^{n\times n}$ denotes symmetric matrices of dimension $n\times n$.
 
A tensor is a multi-way array. A real $d$-th order $(n_1,\ldots,n_d)$-dimensional tensor $\mathcal A$ is defined as $\mathcal A = (A_{i_1\cdots i_d})_{1\leq i_1\leq n_1,\ldots,1\leq i_d\leq n_d}$, whose space is denoted as $\mathbb R^{n_1\times\cdots\times n_d}$. When $n_1=\cdots =n_d$, we write $\mathbb R^{n^d}$ for short. 
 For two tensors $\mathcal A,\mathcal B$ of the same size, their inner product is given by
$\langle \mathcal A,\mathcal B\rangle = \sum^{n_1}_{i_1=1}\cdots\sum^{n_d}_{i_d=1}\mathcal A_{i_1\cdots i_d}\mathcal B_{i_1\cdots i_d}.$
The Frobenius (or Hilbert–Schmidt)  norm of $\mathcal A$ is defined by $\|\mathcal A\|_F = \langle\mathcal A,\mathcal A\rangle^{1/2}.$  ${\rm tr}(\cdot)$ denotes the trace of a square matrix. It holds that ${\rm tr}(A)=\langle A,I \rangle $, where $I$ denotes the identity matrix of the same size as $A$.

\paragraph{Permutation}
Let $\pi(1,\ldots, d)$ be the sets of all permutations of $\{1,\ldots, d \}$. For any $\pi\in \pi(1,\ldots, d)$, define $\boldsymbol{ Per }_{[\pi]}(\mathcal A)$ as a permutation of $\mathcal A$ with respect to $\pi$. For example, for $A\in\mathbb R^{n^3}$, $\boldsymbol{ Per }_{[2;1;3]}(\mathcal A)$ is given by switching the first and the second modes of $\mathcal A$. Reducing to the matrix case, $\boldsymbol{ Per }_{[2;1]}(\cdot)$ is exactly the matrix transpose. It is identical to the Matlab function   \textsf{permute}.

\paragraph{Symmetric    tensors and symmetrization} For   $\mathcal A \in\mathbb R^{n^d}$,  if $\boldsymbol{ Per }_{[\pi]}(\mathcal A) = \mathcal A$ for any $\pi\in \pi(1,\ldots, d)$, 
 then $\mathcal A$ is called symmetric. 
 $\mathbb S^{n^d}$   denotes the subspace of $d$-th order $n$-dimensional real symmetric tensors. 

For any $\mathcal A\in \mathbb R^{n^d}$, define the symmetrization of $\mathcal A$  as $$\small\boldsymbol{Sym}(\mathcal A):= \frac{\sum_{\pi \in \pi(1,\ldots,d)  } \boldsymbol{Per}_{[\pi]}   (\mathcal A)    }  {d!} \in\mathbb S^{n^d},$$ namely, it is given by the average of the sum of all the permutations of $\mathcal A$. It then can be readily seen that
\begin{proposition}\label{prop:sym_invariance}
	Let $\mathcal A\in\mathbb S^{n^d}$. Then for any $\mathcal B\in\mathbb R^{n^d}$, there holds $\langle \mathcal A,\mathcal B\rangle = \langle\mathcal A,\boldsymbol{Sym}(\mathcal B)\rangle$.
\end{proposition}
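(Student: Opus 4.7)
The plan is to exploit the simultaneous-permutation invariance of the inner product together with the symmetry of $\mathcal A$. Concretely, for any permutation $\pi \in \pi(1,\ldots,d)$ and any two tensors $\mathcal X,\mathcal Y \in \mathbb R^{n^d}$, the inner product satisfies $\langle \mathcal X,\mathcal Y\rangle = \langle \boldsymbol{Per}_{[\pi]}(\mathcal X), \boldsymbol{Per}_{[\pi]}(\mathcal Y)\rangle$; this follows directly from the definition, because the summation $\sum_{i_1,\ldots,i_d} \mathcal X_{i_1\cdots i_d} \mathcal Y_{i_1\cdots i_d}$ is unchanged when the roles of the indices $i_1,\ldots,i_d$ are relabeled in the same way on both sides.

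First I would apply this identity with $\mathcal X = \mathcal A$ and $\mathcal Y = \mathcal B$, and then use the hypothesis $\mathcal A \in \mathbb S^{n^d}$, which by definition says $\boldsymbol{Per}_{[\pi]}(\mathcal A) = \mathcal A$ for every $\pi$. This immediately yields
\begin{equation*}
\langle \mathcal A,\mathcal B\rangle \;=\; \langle \boldsymbol{Per}_{[\pi]}(\mathcal A), \boldsymbol{Per}_{[\pi]}(\mathcal B)\rangle \;=\; \langle \mathcal A, \boldsymbol{Per}_{[\pi]}(\mathcal B)\rangle \qquad \text{for every } \pi\in\pi(1,\ldots,d).
\end{equation*}

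Next I would average these $d!$ equalities over all permutations and pull the average inside the inner product by linearity, giving
\begin{equation*}
\langle \mathcal A,\mathcal B\rangle \;=\; \frac{1}{d!}\sum_{\pi\in\pi(1,\ldots,d)} \langle \mathcal A, \boldsymbol{Per}_{[\pi]}(\mathcal B)\rangle \;=\; \Bigl\langle \mathcal A,\, \tfrac{1}{d!}\!\sum_{\pi} \boldsymbol{Per}_{[\pi]}(\mathcal B)\Bigr\rangle \;=\; \langle \mathcal A, \boldsymbol{Sym}(\mathcal B)\rangle,
\end{equation*}
which is the claimed identity. There is no real obstacle here: the whole argument rests on the elementary observation that relabeling indices simultaneously on both factors leaves the inner product unchanged, and so symmetry of one factor lets us freely symmetrize the other. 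The only thing to be careful about is to verify the index-relabeling identity for $\langle\cdot,\cdot\rangle$ from the definition, which is a straightforward change of summation variables.
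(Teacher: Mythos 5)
Your proof is correct, and since the paper simply states the result as ``readily seen'' without giving a proof, the argument you give---simultaneous permutation invariance of the inner product, followed by averaging over all $d!$ permutations and pulling the average inside by bilinearity---is exactly the natural one the authors had in mind. The only subtlety, the index-relabeling identity $\langle \mathcal X,\mathcal Y\rangle = \langle \boldsymbol{Per}_{[\pi]}(\mathcal X), \boldsymbol{Per}_{[\pi]}(\mathcal Y)\rangle$, you correctly flag and justify by a change of summation variables.
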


\paragraph{Matricization and tensorization}\label{Sec:balanced_unfoldings}  Given $\mathcal A\in\mathbb R^{n^d}$, we define     $\boldsymbol{ Mat }(\mathcal A) $ as a matrix following the Matlab function \textsf{reshape}, i.e., $\boldsymbol{ Mat }(\mathcal A):= \textsf{reshape}(\mathcal A, n^{ \lfloor d/2\rfloor  }, n^{ \lceil d/2 \rceil  }) \in \mathbb R^{ n^{ \lfloor d/2\rfloor }\times n^{\lceil d/2\rceil}      }$. When $d$ is even, $\boldsymbol{ Mat }(\mathcal A) $ is a symmetric matrix. Conversely, for any $A \in \mathbb R^{ n^{ \lfloor d/2\rfloor }\times n^{\lceil d/2\rceil}      }$, define the tensorization of $A$ as $\boldsymbol{ Ten }(A) := \textsf{reshape}(A,\overbrace{n,\ldots,n}^{d}) \in\mathbb R^{n^d}$. There hold $\mathcal A=\boldsymbol{ Ten }( \boldsymbol{ Mat }(\mathcal A)  )$ and $A=\boldsymbol{ Mat }(\boldsymbol{ Ten }(A))$.

  \paragraph{Outer  product and Kronecker product} Notations follow those of \cite{kolda2010tensor}.
  The outer  product $\mathbf a_1\circ   \cdots\circ \mathbf a_d$ of $d$ vectors $\mathbf a_i\in\mathbb R^{n_i},~ 1\leq i\leq d$ is a rank-1 tensor  whose entries are the product of the corresponding vector entries:
  $$(\mathbf a_1\circ   \cdots\circ\mathbf a_d)_{j_1\cdots j_d} = \mathbf a_{1,{j_1}} \cdots \mathbf a_{d,{j_d}},~\forall~ 1\leq j_i\leq n_i,~1\leq i\leq d.$$
When $d=2$, it reduces to the multiplication of a column vector and a row vector, namely,  $
  \mathbf a_1 \circ \mathbf a_2 =\mathbf a_1\mathbf a_2^{\top}.
  $
The Kronecker product $\mathbf a_1\otimes\cdots  \otimes\mathbf a_d\in\mathbb R^{ \prod^d_{i=1}n_i }$  is  a vector given by the vectorization of   $\mathbf a_1\circ\cdots\circ \mathbf a_d $. Throughout this paper, we use  $\mathbf a^{\circ^m}$ and $\mathbf a^{\otimes^m}$ to represent $\overbrace{\mathbf a \circ\cdots\circ\mathbf a}^{m}$ and $\overbrace{\mathbf a\otimes\cdots \otimes\mathbf a}^{m}$, respectively.

\paragraph{Tensor CP-rank} 
The CP-rank of a tensor $\mathcal A$, denoted by ${\rm rank_{CP}}(X)$, is defined as the
smallest number of rank-$1$ tensors that generate $\mathcal A$ as their sum \cite{kolda2010tensor}.  In particular, we write ${\rm rank_{CP}}(\mathcal A)=1$ if $\mathcal A$ is a rank-1 tensor.

\section{Problem Formulation}\label{sec:prob}
The optimization model we are interested is 
\begin{equation}
\label{prob:original2}
\max ~f(\mathbf x) = \sum^n_{i_1,\ldots,i_d=1}\nolimits\mathcal A_{i_1\cdots i_d}\mathbf x_{i_1}\cdots \mathbf x_{i_d} =   \langle \mathcal A ,   \mathbf x^{\circ^d} \rangle~{\rm s.t.} \|\mathbf x\|=1,  \mathbf x\in  \mathbb R^n.
\end{equation}
Denoting $\mathcal A\mathbf x^{\circ^{d-1}} \in\mathbb R^{n}$ with $(\mathcal A\mathbf x^{\circ^{d-1}} )_i = \sum^n_{i_2,\ldots,i_d=1}\mathcal A_{ii_2\cdots i_d}\mathbf x_{i_2}\cdots\mathbf x_{i_d}$, the eigenvalue problem of $\mathcal A$ is defined as follows:
\[
   \mathcal A\mathbf x^{\circ^{d-1}}   = \sigma\mathbf x,~\|\mathbf x\|=1,
\]
where $\sigma\in\mathbb R$. Here $(\mathbf x,\sigma)$ is called an eigenpair of $\mathcal A$. Then the maximum of  \eqref{prob:original2} is the leading/largest eigenvalue of $\mathcal A$.
We prefer to write \eqref{prob:original2} as a tensor optimization problem. 
Denote $\mathcal X:=\mathbf x^{\circ^d}\in\mathbb S^{n^d}$. When $d$ is even, $\boldsymbol{ Mat }(\mathcal X) =\mathbf x^{\otimes^{d/2}}\mathbf x^{\otimes^{d/2}\top}$, and it holds that $\|\mathbf x\|=1\Leftrightarrow \| \mathbf x^{\otimes^{d/2}}\|=1\Leftrightarrow {\rm tr}( \boldsymbol{ Mat }(\mathcal X)  ) =1$; when $d$ is odd, we have $\|\mathbf x\|=1\Leftrightarrow \|\mathcal X\|_F=1$. Now we can equivalently rewrite \eqref{prob:original2} into the following form with a tensor variable:
\begin{equation}
\label{prob:original3}
\max~ \langle \mathcal A,\mathcal X\rangle~{\rm s.t.} ~{\rm rank_{CP}} (\mathcal X)=1, \mathcal X\in \mathbb S^{n^d}, \left\{  
\begin{array}{lr}
  {\rm tr}( \boldsymbol{ Mat }(\mathcal X)  ) =1,  & d~{\rm is~even},\\  
 \|\mathcal X\|_F=1, & d~{\rm is~odd}.\\
\end{array}  
\right.  
\end{equation}
which is a tensor optimization problem.  Note that we have distinguished the even and odd cases, because when $d$ is odd and if $\mathcal X$ is a feasible solution, then so is $-\mathcal X$, which is not true when $d$ is even. 

When $d$ is even, Jiang et al. \cite{jiang2015tensor} showed that for $\mathcal X\in\mathbb S^{n^d}$, if $\boldsymbol{ Mat }(\mathcal X)$ is a rank-1 matrix, then $\mathcal X$ itself is also a rank-1 tensor. Using this equivalence property, the constraint ${\rm rank_{CP}}(\mathcal X)=1$ can be equivalently replaced by ${\rm rank}(\boldsymbol{ Mat }(\mathcal X))=1$ in \eqref{prob:original3}, and then \eqref{prob:original3}   turns into a matrix program with matrix rank-1 constraint. Based on this property, an SDP relaxation was proposed in \cite{jiang2015tensor} provided $d$ being even (note that $\boldsymbol{ Mat }(\mathcal X) \in\mathbb S^{n^{d/2}\times n^{d/2}}$ ):
 \begin{equation}
\label{prob:relax_convex}
(R)~~\max ~\langle  \mathcal A,\mathcal X\rangle~{\rm s.t.}~        \boldsymbol{Mat}(\mathcal X)\succeq 0, ~\mathcal X\in\mathbb S^{n^d},~{\rm tr}(\boldsymbol{Mat}(\mathcal X))  =1,
\end{equation}
Although the SDP    is   effective to find the global solutions of \eqref{prob:original3}  in practice, solving it typically relies on computing   full/partial EVD of  size $n^{d/2}\times n^{d/2}$ at each iterate. On the other hand,   dealing with odd $d$ requires to increase $d$ \cite{jiang2015tensor}. Similar situations happen   to the models in \cite{nie2014semidefinite}.

In view of the above limitations, we consider a nonconvex reformulation of \eqref{prob:original3}. To achieve it, in the following we first   give a generalization of   the equivalence property of \cite{jiang2015tensor} to any order $d$. It can be seen as a corollary of \cite[Theorem 3.3 and Corollary 3.5]{yang2016rank}. For completeness, we  present a concise proof in the supplemental materials.
\begin{theorem}\label{prop:equivalance} For any integer $d\geq 2$, there holds
\begin{equation*} 
\label{eq:equivalence}
		      \setlength\abovedisplayskip{2pt}
\setlength\abovedisplayshortskip{2pt}
\setlength\belowdisplayskip{2pt}
\setlength\belowdisplayshortskip{2pt}
\{ \mathcal X~|~ {\rm rank_{CP}}  (\mathcal X)=1,\mathcal X\in\mathbb S^{n^d}    \} = \{ \mathcal X~|~ {\rm rank}( \boldsymbol{ Mat }(\mathcal X)   )=1,  \mathcal X\in \mathbb S^{n^d}\}.
\end{equation*}
\end{theorem}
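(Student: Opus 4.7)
The inclusion $\supseteq$ is the substantive one; the forward direction is immediate. If $\mathcal X = \mathbf a^{\circ^d}$, unrolling the \textsf{reshape} that defines $\boldsymbol{Mat}$ gives $\boldsymbol{Mat}(\mathcal X)=\mathbf a^{\otimes^{\lfloor d/2\rfloor}}\bigl(\mathbf a^{\otimes^{\lceil d/2\rceil}}\bigr)^{\top}$, visibly rank at most one; I would dispose of this in a single line.

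For the converse, set $k=\lfloor d/2\rfloor$, $\ell=\lceil d/2\rceil$, and take a nonzero $\mathcal X\in\mathbb S^{n^d}$ with $\boldsymbol{Mat}(\mathcal X)=\mathbf u\mathbf v^\top$ of rank one, $\mathbf u\in\mathbb R^{n^k}$, $\mathbf v\in\mathbb R^{n^\ell}$. Reshaping $\mathbf u,\mathbf v$ into a $k$-way tensor $U$ and an $\ell$-way tensor $V$, the rank-one factorisation reads $\mathcal X_{i_1\cdots i_d}=U_{i_1\cdots i_k}\,V_{i_{k+1}\cdots i_d}$. My plan has three steps. First, a permutation of $\mathcal X$ confined to the first $k$ modes permutes the rows of $\boldsymbol{Mat}(\mathcal X)$; uniqueness of the rank-one factorisation up to scale, combined with symmetry of $\mathcal X$, forces $U$ to be a symmetric $k$-tensor, and the mirror-image argument forces $V$ to be a symmetric $\ell$-tensor. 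Second, applying the transposition swapping positions $k$ and $k+1$ to $\mathcal X$ yields
\[
U_{i_1\cdots i_{k-1}\,i_k}\,V_{i_{k+1}\,i_{k+2}\cdots i_d}\;=\;U_{i_1\cdots i_{k-1}\,i_{k+1}}\,V_{i_k\,i_{k+2}\cdots i_d}.
\]
Freezing all indices except $i_k$ and $i_{k+1}$ turns this into $f(i_k)g(i_{k+1})=f(i_{k+1})g(i_k)$ for suitable one-variable functions $f,g$, which forces $f$ and $g$ to be proportional as vectors in $\mathbb R^n$. Running over all choices of the frozen indices, together with the symmetry established in step one, gives that every interface fibre of both $U$ and $V$ is proportional to a single direction $\mathbf a\in\mathbb R^n$. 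Third, a short induction on the order exploits the symmetry of $U$ (resp.\ $V$) to propagate this common direction into every mode, producing $U=\alpha\,\mathbf a^{\circ^k}$, $V=\beta\,\mathbf a^{\circ^\ell}$, and hence $\mathcal X=(\alpha\beta)\,\mathbf a^{\circ^d}$, a symmetric rank-1 tensor.

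The main obstacle I anticipate is step three: one must verify that \emph{the same} vector $\mathbf a$ extracted from the interface position functions as the common generator in every mode of both $U$ and $V$, and that the various scalar factors aggregate consistently. The cleanest organisation is to isolate a preparatory lemma, proved by induction on $k$, stating that a symmetric $k$-tensor all of whose mode-$1$ fibres are proportional to a common vector $\mathbf a$ must be of the form $\gamma\,\mathbf a^{\circ^k}$; applying this lemma to $U$ and to $V$ closes the argument. Small degeneracies (e.g.\ zero entries of $\mathbf u$ or $\mathbf v$) are handled by normalising against any nonzero entry, so they do not affect the structure of the proof.
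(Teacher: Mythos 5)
Your proposal is correct, and the plan is complete: the $\subseteq$ direction is indeed a one-line unrolling of \textsf{reshape}, and the three-step plan for $\supseteq$ is a sound derivation. Let me flag the two places where care is genuinely needed and confirm you have handled them.

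In step two, the relation $f(i_k)g(i_{k+1})=f(i_{k+1})g(i_k)$ only tells you that $f$ and $g$ are linearly dependent \emph{for that particular set of frozen indices}; the proportionality vector could \emph{a priori} drift as the frozen indices vary, and one of $f,g$ may vanish. The correct order of operations (which your write-up implies but is worth stating explicitly) is: fix frozen $U$-indices $(j_1,\ldots,j_{k-1})$ with $\mathbf a:=U_{j_1\cdots j_{k-1},:}\neq 0$; vary the frozen $V$-indices to conclude every mode-$1$ fibre of $V$ lies on the line spanned by $\mathbf a$; then pick a nonzero such fibre of $V$ and vary the $U$-indices to conclude every mode-$k$ fibre of $U$ lies on the same line. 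This pins the \emph{single} direction $\mathbf a$ before you ever invoke the inductive lemma.

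Your preparatory lemma is the right device and closes the argument cleanly. To verify it applies to $U$ you do need to observe that (by the step-one symmetry of $U$) the mode-$1$ and mode-$k$ fibres of $U$ are literally the same collection of $n$-vectors, so ``all mode-$k$ fibres proportional to $\mathbf a$'' transfers to the form required by the lemma. With that observation, the induction you sketch is precisely correct: writing $U_{i_1,I}=c(I)\,\mathbf a_{i_1}$, symmetry of $U$ passes to $c$ (cancelling $\mathbf a_{i_1}$), and the mode-$1$ fibres of $c$ are again multiples of $\mathbf a$ because they coincide, up to the scalar $1/\mathbf a_j$, with certain fibres of $U$; applying the inductive hypothesis gives $c=\gamma\,\mathbf a^{\circ^{k-1}}$, hence $U=\gamma\,\mathbf a^{\circ^k}$. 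Applying the lemma to $V$ as well gives $\mathcal X=(\alpha\beta)\,\mathbf a^{\circ^d}$, which is rank-one since $\alpha,\beta,\mathbf a$ are all nonzero. The paper obtains the same statement as a corollary of an unfolding-rank result in the cited reference and supplies a concise proof in its supplement; your argument is a self-contained, elementary derivation in the same spirit (symmetry of the two reshaped factors, interface transposition, then induction), so there is no substantive divergence in approach, only in whether the key lemma is isolated and proved inline or imported.
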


 Based on Theorem \ref{prop:equivalance}, it is readily seen that problem   \eqref{prob:original3} is equivalent to the following problem  with a matrix rank-1 constraint, for any order $d$:
 \begin{equation}
 \label{prob:original4}
 (O)~~\max ~\langle  \mathcal A,\mathcal X\rangle~{\rm s.t.} ~  {\rm rank}(\boldsymbol{Mat}(\mathcal X))=1, \mathcal X\in \mathbb S^{n^d}, \left\{  \begin{array}{lr}
 {\rm tr}( \boldsymbol{ Mat }(\mathcal X)  ) =1,  & d~{\rm is~even},\\  
 \|\mathcal X\|_F=1, & d~{\rm is~odd}.\\
 \end{array}  
 \right.  
 \end{equation}
Once a global solution $\mathcal X^*$ of \eqref{prob:original4} is obtained, Theorem \ref{prop:equivalance} shows that $\mathcal X^*$ is a rank-1 tensor. By writing     $\mathcal X^*=\mathbf x^{\circ^d}$ with $\|\mathbf x\|=1$, $\mathbf x$ is thus a global solution to the original problem \eqref{prob:original2}.
 \eqref{prob:original4}   is the main model of this work. Its properties and solution methods will be studied in the next section.

\section{Optimality Conditions, Nonconvex ADMM, and Convergence}\label{sec:alg}
We remark that all the models, algorithms and theoretical results presented in this section are applicable for all $d$, while we mainly present the analysis when $d$ is even, because it is quite similar when $d$ is odd\footnote{When $d$ is odd, its convex relaxation can be as follows, although we do not   solve it. Here $\|\cdot\|_*$ stands for the nuclear norm of a matrix, i.e., the sum of singular values.
	\begin{equation*}
	\label{prob:relax1}
	\setlength\abovedisplayskip{2pt}
	\setlength\abovedisplayshortskip{2pt}
	\setlength\belowdisplayskip{2pt}
	\setlength\belowdisplayshortskip{2pt}
	\min~\langle-\mathcal A,\mathcal X\rangle~{\rm s.t.}~ \|\boldsymbol{ Mat }(\mathcal X)\|_*\leq 1, \mathcal X\in\mathbb S^{n^d}.
	\end{equation*}}.
\subsection{Global optimality Conditions and ADMM}

We first introduce an auxiliary variable $\mathcal Y$ and equivalently rewrite \eqref{prob:original4} as
\begin{equation}
\tag{\ref{prob:original4}$^\prime$}  \label{prob:relax3}
		      \setlength\abovedisplayskip{2pt}
\setlength\abovedisplayshortskip{2pt}
\setlength\belowdisplayskip{2pt}
\setlength\belowdisplayshortskip{2pt}
\min ~\langle  -\mathcal A,\mathcal Y\rangle~{\rm s.t.}~ \mathcal X=\mathcal Y, \mathcal X\in C,~\mathcal Y\in\mathbb S^{n^d},
\end{equation}
where for convenience we use ``$\min$'' to replace ``$\max$'', and
$$\small C:= 	\left\{  
\begin{array}{lr}
\{\mathcal X\in\mathbb R^{n^d}\mid {\rm rank}(\boldsymbol{Mat}(\mathcal X))=1 ,\boldsymbol{ Mat }(\mathcal X)\in \mathbb S^{ n^{   d/2   }\times n^{   d/2   }  },  {\rm tr}( \boldsymbol{ Mat }(\mathcal X)  ) =1  \},  & d~{\rm   even},\\  
\{\mathcal X\in\mathbb R^{n^d}\mid   {\rm rank}(\boldsymbol{Mat}(\mathcal X))=1 , \|\mathcal X\|_F=1  \},& d~{\rm  odd}.\\
\end{array}  
\right.  
$$
The purpose  of \eqref{prob:relax3} is to seperate   $C\cap S^{n^d}$ into $C$ and $S^{n^d}$ that can   easier   deal with.
\begin{remark}\label{rmk:1}
The constraint $C$ is equivalent to that when $d$ is even, $\boldsymbol{Mat}(\mathcal X) = \mathbf z\mathbf z^\top$ with $\|\mathbf z\|=1,\mathbf z\in\mathbb R^{n^{d/2}}$; when $d$ is odd,  $\boldsymbol{Mat}(\mathcal X) = \mathbf z_1\mathbf z_2^\top$, with $\|\mathbf z_1\|=\|\mathbf z_2\|=1,\mathbf z_1\in\mathbb R^{n^{ \lfloor d/2\rfloor   }  }, \mathbf z_2\in\mathbb R^{ n^{ \lceil d/2 \rceil   }   }$.
\end{remark}

The Lagrangian function for \eqref{prob:relax3} is given as
$L(\mathcal X,\mathcal Y,\Lambda) := \langle -\mathcal A,\mathcal Y\rangle -\langle \Lambda,\mathcal X-\mathcal Y\rangle $, with   $\Lambda \in \mathbb R^{n^d}$ being the dual variable.   The optimality condition for \eqref{prob:relax3} reads as follows:
\begin{equation}  \label{eq:kkt_nonconvex_reformulation_notau}\small
	\left\{  
	\begin{array}{lr}
\mathcal X^* \in \arg\min_{\mathcal X\in C} L(\mathcal X,\mathcal Y^*,\Lambda^*)=\arg\min_{\mathcal X\in C} \langle -\Lambda^*,\mathcal X\rangle	\Leftrightarrow \langle -\Lambda^*, \mathcal X-\mathcal X^*\rangle \geq 0   ,  &\forall \mathcal X\in C,\\  
 \mathcal Y^*\in\arg\min_{\mathcal Y\in\mathbb S^{n^d}}	L(\mathcal X^*,\mathcal Y,\Lambda^*)\Leftrightarrow			 \langle -\mathcal A+\Lambda^*, \mathcal Y-\mathcal Y^*\rangle\geq 0, &\forall \mathcal Y\in  \mathbb S^{n^d} ,  \\  
 ~~~~~~~~~~~~~~~~~~~~~~~~~~~~~~~~~~~~~~\,~~~\Leftrightarrow \boldsymbol{ Sym}(\Lambda^*)=\mathcal A,&\\
	\mathcal X^*=\mathcal Y^*,&\\  
	\end{array}  
	\right.  
	\end{equation}  
Due to  Theorem \ref{th:global_solution}, the $\mathcal Y$-subproblem amounts to $\boldsymbol{ Sym }(\Lambda^*)=\mathcal A$.   Note that although $C$ is nonconvex, the $\mathcal X$-subproblem can   be written as a variational inequality as well because the Lagrangian is linear with respect to $\mathcal X$.
 	By eliminating $\mathcal Y^*$, \eqref{eq:kkt_nonconvex_reformulation_notau}  can be   simplified as follows:
\begin{equation}
\label{eq:kkt_nonconvex_reformulation_notau_variant}
\exists ~\{\mathcal X^*,\Lambda^* \} \in  C\cap \mathbb S^{n^d}\times \mathbb R^{n^d},~{\rm s.t. }~ \mathcal X^*\in\arg\min_{\mathcal X\in C}\nolimits \langle -\Lambda^*,\mathcal X\rangle,~\boldsymbol{ Sym}(\Lambda^*) = \mathcal A. 
\end{equation}
Since $\mathcal X^*\in C\cap \mathbb S^{n^d}$, Theorem \ref{prop:equivalance} shows that $\mathcal X^*$ can be written as $\mathcal X^*=\mathbf x^{\circ^d}$ with   $\|\mathbf x\|=1,\mathbf x\in\mathbb R^n$.

\begin{remark}\label{rmk:2}  \eqref{eq:kkt_nonconvex_reformulation_notau_variant} means the existence of $\Lambda^*$ such that $\boldsymbol{ Sym}(\Lambda^*) = \mathcal A$, with $\mathbf x^{\otimes^{d/2}}$ being a leading eigenvector of $\boldsymbol{ Mat }(\Lambda^*)$ ($d$ is even). The most simple example is the orthogonally decomposable tensor, in which one has $\Lambda^*=\mathcal A$, and  if $\mathbf x$ is a leading eigenvector of $\mathcal A$, then $\mathbf x^{\otimes^{d/2}}$ is also a leading eigenvector of $\boldsymbol{ Sym}(\Lambda^*) $. On the other hand, when $d=2$, i.e., the matrix cases, \eqref{eq:kkt_nonconvex_reformulation_notau} naturally holds with $\Lambda^*=\mathcal A$. However, in general, $\Lambda^*\neq \mathcal A$.
\end{remark}	
	\begin{example}
 Consider the example that $\mathcal A\in\mathbb S^{2^4}$ with all the entries of $\mathcal A$ being one, except   $\mathcal A_{1111}=2$. We have $\Lambda^*$ with
$$\footnotesize
\boldsymbol{ Mat }(\Lambda^*)=	\left[
\begin{matrix}
2.0000 &    1.0000  &  1.0000  &  0.7349 \\
1.0000  &  1.1326 &   1.1326 &   1.0000 \\
1.0000  &  1.1326 &   1.1326 &   1.0000\\
0.7349  &  1.0000 &   1.0000 &   1.0000
\end{matrix}
\right]  ,
$$
and $\mathcal X^*=\mathbf x^{\circ^4}$ where $\mathbf x = [0.7557~ 0.6550]^\top$. $\{\mathcal X^*,\Lambda^* \}$ satisfies \eqref{eq:kkt_nonconvex_reformulation_notau_variant} while $\Lambda^*\neq\mathcal A$.
	\end{example}

It should be noted that due to the nonconvexity of $C$, in general, it is hard to determine whether solutions exist to  \eqref{eq:kkt_nonconvex_reformulation_notau}, namely, it is not sure whether $\Lambda^*$ exists. This issue will be further studied in  Section \ref{sec:uniqueness}.
Here, the following proposition shows the connection between the existence and the tightness of the convex relaxations.
	\begin{proposition}\label{prop:admm:1}
		When $d$ is even,   \eqref{eq:kkt_nonconvex_reformulation_notau} exists solutions iff the SDP relaxation \eqref{prob:relax_convex} is tight.
	\end{proposition}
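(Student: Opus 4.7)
The plan is to establish both directions via SDP duality for $(R)$. Writing a scalar dual multiplier $\mu\in\mathbb R$ for the trace constraint and a PSD multiplier $Z\succeq 0$ for the PSD constraint on $\boldsymbol{Mat}(\mathcal X)$, boundedness of the Lagrangian over $\mathcal X\in\mathbb S^{n^d}$ forces, via Proposition \ref{prop:sym_invariance}, the dual feasibility $\boldsymbol{Sym}(\mu\mathcal I-\boldsymbol{Ten}(Z))=\mathcal A$, where $\mathcal I:=\boldsymbol{Ten}(I_{n^{d/2}})$. Setting $\Lambda:=\mu\mathcal I-\boldsymbol{Ten}(Z)$, dual feasibility becomes exactly $\boldsymbol{Sym}(\Lambda)=\mathcal A$, while $\boldsymbol{Mat}(\Lambda)=\mu I-Z$, which is the key bridge between the SDP dual of $(R)$ and the multiplier $\Lambda^*$ appearing in \eqref{eq:kkt_nonconvex_reformulation_notau_variant}. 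Throughout I assume strong duality for $(R)$ with dual attainment, verified by standard SDP arguments.

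For the ``$\Leftarrow$'' direction, by tightness there exists a rank-$1$ primal optimum $\mathcal X^*=\mathbf x^{\circ^d}$ of $(R)$; pair it with a dual optimum $(\mu^*,Z^*)$ and define $\Lambda^*:=\mu^*\mathcal I-\boldsymbol{Ten}(Z^*)$, so that $\boldsymbol{Sym}(\Lambda^*)=\mathcal A$ holds automatically. Complementary slackness $Z^*\boldsymbol{Mat}(\mathcal X^*)=0$, combined with $\boldsymbol{Mat}(\mathcal X^*)=\mathbf x^{\otimes^{d/2}}\mathbf x^{\otimes^{d/2}\top}$, yields $Z^*\mathbf x^{\otimes^{d/2}}=0$, hence $\boldsymbol{Mat}(\Lambda^*)\mathbf x^{\otimes^{d/2}}=\mu^*\mathbf x^{\otimes^{d/2}}$. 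Since $Z^*\succeq 0$ forces $\mu^*=\lambda_{\max}(\boldsymbol{Mat}(\Lambda^*))$, the vector $\mathbf x^{\otimes^{d/2}}$ is a leading eigenvector; by the characterization of $C$ in Remark \ref{rmk:1}, this is exactly $\mathcal X^*\in\arg\min_{\mathcal X\in C}\langle -\Lambda^*,\mathcal X\rangle$, so $(\mathcal X^*,\Lambda^*)$ solves \eqref{eq:kkt_nonconvex_reformulation_notau_variant}.

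For the ``$\Rightarrow$'' direction, suppose $(\mathcal X^*,\Lambda^*)$ solves \eqref{eq:kkt_nonconvex_reformulation_notau_variant}; by Theorem \ref{prop:equivalance} write $\mathcal X^*=\mathbf x^{\circ^d}$ with $\|\mathbf x\|=1$, and put $\lambda^*:=\lambda_{\max}(\boldsymbol{Mat}(\Lambda^*))$, of which $\mathbf x^{\otimes^{d/2}}$ is a leading eigenvector as in Remark \ref{rmk:2}. Taking $\mu^*:=\lambda^*$ and $Z^*:=\lambda^*I-\boldsymbol{Mat}(\Lambda^*)\succeq 0$ produces a dual-feasible pair with objective $\mu^*=\lambda^*$; using Proposition \ref{prop:sym_invariance}, the primal value at $\mathcal X^*$ equals $\langle\mathcal A,\mathcal X^*\rangle=\langle\Lambda^*,\mathcal X^*\rangle=\mathbf x^{\otimes^{d/2}\top}\boldsymbol{Mat}(\Lambda^*)\mathbf x^{\otimes^{d/2}}=\lambda^*$. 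Since $\mathcal X^*$ is rank-$1$ and hence feasible for $(O)$, weak duality gives $\mathrm{val}(O)\leq\mathrm{val}(R)\leq\mu^*=\lambda^*=\langle\mathcal A,\mathcal X^*\rangle\leq\mathrm{val}(O)$, which collapses to tightness. The main obstacle I anticipate is the bookkeeping of the asymmetry between the symmetric tensor $\mathcal A$ and the generally non-symmetric $\boldsymbol{Ten}(Z^*)$ produced by the SDP dual, which is precisely why $\boldsymbol{Sym}(\Lambda^*)=\mathcal A$ (rather than $\Lambda^*=\mathcal A$) appears; consistent use of Proposition \ref{prop:sym_invariance} is essential, and a secondary technicality is justifying strong duality since $\boldsymbol{Mat}(\mathbb S^{n^d})$ is a proper subspace of $\mathbb S^{n^{d/2}}$ that typically misses the interior of the PSD cone.
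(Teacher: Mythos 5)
Your proof is correct, and it is recognizably the same duality-based picture the paper has in mind, but you take a noticeably more direct route. The paper derives the dual $(D)$ in the variable $\Lambda$ (minimize $\sigma_{\max}(\boldsymbol{Mat}(\Lambda))$ subject to $\boldsymbol{Sym}(\Lambda)=\mathcal A$), first proves the stronger Proposition \ref{prop:cond:1} characterizing $V_O=V_D$ in terms of the leading eigenspace of $\boldsymbol{Mat}(\Lambda^*)$ and the multiplicity ${\rm mult}(\mathcal A)=t$, and then reads Proposition \ref{prop:admm:1} off as a corollary. You instead work with the explicit SDP multipliers $(\mu,Z)$ of $(R)$, set $\Lambda:=\mu\mathcal I-\boldsymbol{Ten}(Z)$, and argue the ``if'' direction directly via complementary slackness $Z^*\boldsymbol{Mat}(\mathcal X^*)=0$ on a rank-one primal optimizer, and the ``only if'' direction by exhibiting the dual-feasible pair $(\lambda^*,\lambda^* I-\boldsymbol{Mat}(\Lambda^*))$ and sandwiching with weak duality. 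This buys you a self-contained proof of exactly the equivalence stated, without the extra multiplicity bookkeeping the paper invests in (which it needs elsewhere, e.g.\ for the remark that ${\rm mult}(\mathcal A)=1$ is necessary for simplicity of the leading eigenvalue of $\boldsymbol{Mat}(\Lambda^*)$); the paper's route buys a reusable, sharper structural statement at the cost of indirection. Note that both you and the paper rely on existence of an optimal dual multiplier $\Lambda^*$ (the paper asserts it after \eqref{eq:kkt_nonconvex_reformulation_notau_variant_relax}; you assume ``strong duality with dual attainment''); you are right to flag this as delicate, since for $d\ge 4$ the image $\boldsymbol{Mat}(\mathbb S^{n^d})$ meets only a proper face of the PSD cone, so primal Slater fails even though dual Slater holds with $\Lambda=\mathcal A$, and dual attainment therefore does not follow from the most standard Slater-based argument. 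The paper glosses over the same point, so this is not a gap in your proposal relative to the paper, just a shared caveat.
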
 

Its proof is left behind  Proposition \ref{prop:cond:1} in   Section \ref{sec:uniqueness}. 
As a result of this proposition and that the convex relaxation is often tight in practice \cite{jiang2015tensor}, the system \eqref{eq:kkt_nonconvex_reformulation_notau} is reasonable. Thus  in the sequel, our study is based on  the existence of a solution $\{\mathcal X^*,\mathcal Y^*,\Lambda^* \}$  to \eqref{eq:kkt_nonconvex_reformulation_notau}. 

The following shows that despite   the nonconvexity of $C$,   $\mathcal X^*$ is in fact a global solution to \eqref{prob:original4}.
\begin{theorem}
	\label{th:global_solution}
Let  $\{ \mathcal X^*,\mathcal Y^*,\Lambda^* \}$ satisfy the optimality condition \eqref{eq:kkt_nonconvex_reformulation_notau}. Then 
\begin{enumerate}
\item $\boldsymbol{Sym}(\Lambda^*) = \mathcal A$;
\item $\mathcal X^*$ is an optimal solution to \eqref{prob:original4}.
\end{enumerate}
\end{theorem}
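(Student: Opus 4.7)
The plan is to read off both parts nearly directly from the optimality system \eqref{eq:kkt_nonconvex_reformulation_notau}, using Proposition \ref{prop:sym_invariance} as the essential tool. For Part 1, I would argue that the $\mathcal Y$-subproblem variational inequality $\langle -\mathcal A+\Lambda^*,\mathcal Y-\mathcal Y^*\rangle\ge 0$ for every $\mathcal Y\in\mathbb S^{n^d}$ is in fact an equality statement, since $\mathbb S^{n^d}$ is a linear subspace: substituting $\mathcal Y=\mathcal Y^*\pm t\mathcal Z$ with arbitrary $\mathcal Z\in\mathbb S^{n^d}$ and $t>0$ yields $\langle\Lambda^*-\mathcal A,\mathcal Z\rangle=0$. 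Applying Proposition \ref{prop:sym_invariance} (with its ``$\mathcal A$'' played by the symmetric $\mathcal Z$ and ``$\mathcal B$'' by $\Lambda^*$) rewrites this as $\langle\mathcal Z,\boldsymbol{Sym}(\Lambda^*)-\mathcal A\rangle=0$ for every $\mathcal Z\in\mathbb S^{n^d}$. Since $\boldsymbol{Sym}(\Lambda^*)-\mathcal A$ is itself symmetric, taking $\mathcal Z=\boldsymbol{Sym}(\Lambda^*)-\mathcal A$ forces $\boldsymbol{Sym}(\Lambda^*)=\mathcal A$.

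For Part 2, I would first note that $\mathcal X^*\in C\cap\mathbb S^{n^d}$: $\mathcal X^*=\mathcal Y^*\in\mathbb S^{n^d}$ and $\mathcal X^*$ lies in the $\arg\min$ set, hence in $C$. So $\mathcal X^*$ is feasible for \eqref{prob:original4}. For any other feasible $\mathcal X\in C\cap\mathbb S^{n^d}$, Part 1 and Proposition \ref{prop:sym_invariance} allow me to swap $\mathcal A$ for $\Lambda^*$ in the inner product, since both $\mathcal X$ and $\mathcal X^*$ are symmetric:
\[\langle\mathcal A,\mathcal X\rangle=\langle\mathcal X,\boldsymbol{Sym}(\Lambda^*)\rangle=\langle\Lambda^*,\mathcal X\rangle,\qquad \langle\mathcal A,\mathcal X^*\rangle=\langle\Lambda^*,\mathcal X^*\rangle.\]
The $\mathcal X$-subproblem inequality $\langle -\Lambda^*,\mathcal X-\mathcal X^*\rangle\ge 0$ holds for every $\mathcal X\in C$, hence in particular for every feasible $\mathcal X$, yielding $\langle\mathcal A,\mathcal X^*\rangle-\langle\mathcal A,\mathcal X\rangle=\langle\Lambda^*,\mathcal X^*-\mathcal X\rangle\ge 0$. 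Thus $\mathcal X^*$ attains the maximum of \eqref{prob:original4}.

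I do not expect a real obstacle here: the nonconvexity of $C$ is sidestepped because the Lagrangian is \emph{linear} in $\mathcal X$, so the $\mathcal X$-subproblem supplies the one-sided inequality we need whether $C$ is convex or not. The only subtlety worth flagging is the passage from the $\mathcal Y$-inequality to the identity $\boldsymbol{Sym}(\Lambda^*)=\mathcal A$, which simultaneously uses that $\mathbb S^{n^d}$ is a subspace (upgrading the variational inequality to an equality) and the ``symmetric part suffices'' principle of Proposition \ref{prop:sym_invariance}.
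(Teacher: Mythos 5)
Your proof is correct and follows essentially the same route as the paper: Part 1 extracts $\boldsymbol{Sym}(\Lambda^*)=\mathcal A$ from the $\mathcal Y$-variational inequality via Proposition \ref{prop:sym_invariance} and the subspace structure of $\mathbb S^{n^d}$, and Part 2 uses that identity together with the $\mathcal X$-subproblem inequality to compare $\langle\mathcal A,\mathcal X^*\rangle$ with $\langle\mathcal A,\mathcal X\rangle$ over feasible $\mathcal X$. The paper phrases Part 2 as a contradiction while you argue it directly, but the underlying mechanism (swapping $\mathcal A$ for $\Lambda^*$ on symmetric tensors and invoking the minimality of $\mathcal X^*$ over $C$) is identical.
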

\begin{proof}
For any $\mathcal Z\in\mathbb S^{n^d}$, using Proposition \ref{prop:sym_invariance} we have $\langle \boldsymbol{Sym}(-\mathcal A + \Lambda^*), \mathcal Z\rangle = \langle -\mathcal A+\Lambda^*,\mathcal Z\rangle$; while   the second inequality in \eqref{eq:kkt_nonconvex_reformulation_notau} means that $ \langle -\mathcal A+\Lambda^*,\mathcal Z\rangle=0$ for any $\mathcal Z\in\mathbb S^{n^d}$. This holds iff $\boldsymbol{Sym}(\Lambda^*)=\mathcal A$.

As $\mathcal X^*\in C$ and $\mathcal X^*=\mathcal Y^*\in\mathbb S^{n^d}$, it follows from Theorem \ref{prop:equivalance} that $\mathcal X^*$ is a   rank-1 tensor. Assume that $\mathcal X^*$ is not optimal to \eqref{prob:original4}; then there exists another feasible solution $\mathcal Z $ of  \eqref{prob:original4} such that $\langle \mathcal A,\mathcal Z\rangle > \langle \mathcal A,\mathcal X^*\rangle$. Using again Proposition \ref{prop:sym_invariance}, this results in $\langle -\Lambda^*,\mathcal Z\rangle <\langle -\Lambda^*,\mathcal X^*\rangle$. As $\mathcal Z$ is feasible to \eqref{prob:original4}, $\mathcal Z\in C$, which contradicts the first inequality of \eqref{eq:kkt_nonconvex_reformulation_notau}. The  proof has been completed.
\end{proof}

We propose to solve \eqref{prob:relax3} via a nonconvex ADMM, which relies on the augmented Lagrangian function   defined as  
\begin{equation}\label{eq:augmented_lag}
		      \setlength\abovedisplayskip{2pt}
\setlength\abovedisplayshortskip{2pt}
\setlength\belowdisplayskip{2pt}
\setlength\belowdisplayshortskip{2pt}
L_\tau(\mathcal X,\mathcal Y,\Lambda) := \langle -\mathcal A,\mathcal Y\rangle -\langle \Lambda,\mathcal X-\mathcal Y\rangle + \frac{\tau}{2}\|\mathcal X-\mathcal Y\|_F^2,
\end{equation} 
where $\tau>0$. 
Due to the nonconvexity of $C$, the optimality condition derived from \eqref{eq:augmented_lag}, especially the $\mathcal X$-subproblem is, however, slightly different from \eqref{eq:kkt_nonconvex_reformulation_notau}, and is given as follows:
\begin{equation}  \label{eq:kkt_nonconvex_reformulation}
\left\{  
\begin{array}{lr}
\mathcal X^*\in\arg\min_{\mathcal X\in C} L_\tau(\mathcal X,\mathcal Y^*,\Lambda^*) \Leftrightarrow\langle -\Lambda^*\color{blue}{-\tau\mathcal Y^*}\color{black}, \mathcal X-\mathcal X^*\rangle \geq 0,  &\forall \mathcal X\in C,\\  
\mathcal Y^*\in\arg\min_{\mathcal Y\in\mathbb S^{n^d}}	L_\tau(\mathcal X^*,\mathcal Y,\Lambda^*)    \Leftrightarrow    \boldsymbol{ Sym}(\Lambda^*)=\mathcal A ,  \\  
\mathcal X^*=\mathcal Y^*.&\\  
\end{array}  
\right.  
\end{equation}  
 The $\mathcal X$-subproblem can still be written as a variational inequality   because the Lagrangian is linear with respect to $\mathcal X$ under the constraint $C$  (the quadratic term $\|\mathcal X\|_F^2$ of $L_\tau$ is the constant $1$ under $C$, either $d$ is even or odd). The term $-\tau\mathcal Y^*$ in the $\mathcal X$-subproblem causes a little trouble: Let $\{\mathcal X^*,\mathcal Y^*,\Lambda^*\}$ meet \eqref{eq:kkt_nonconvex_reformulation_notau}; then it must satisfy \eqref{eq:kkt_nonconvex_reformulation}, but the converse might not be true. Nevertheless, for any solution of \eqref{eq:kkt_nonconvex_reformulation}, when writing $\mathcal X^*=\mathbf x^{\circ^{d}}$ with $\mathbf x\in\mathbb R^n,~\|\mathbf x\|=1$ , $\mathbf x$  still acts as an eigenvector of $\mathcal A$:
\begin{proposition}\label{prop:kkt_withtau}
Let $\{\mathcal X^*,\mathcal Y^*,\Lambda^*\}$ meet \eqref{eq:kkt_nonconvex_reformulation}. Denote $\mathcal X^*=\mathbf x^{\circ^{d}}$ with $\|\mathbf x\|=1$. Then there holds $\mathcal A\mathbf x^{\circ^{d-1}} = \sigma\mathbf x$ for some $\sigma\in\mathbb R$.
\end{proposition}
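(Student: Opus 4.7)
The plan is to reduce the variational-inequality $\mathcal X$-condition in \eqref{eq:kkt_nonconvex_reformulation} to a classical smooth optimization problem on the Euclidean unit sphere, from which the eigenvalue equation follows by the Lagrange multiplier rule. After substituting $\mathcal X^* = \mathcal Y^* = \mathbf x^{\circ^d}$ the inequality reads: $\mathbf x^{\circ^d}$ maximizes $\langle\Lambda^*+\tau\mathbf x^{\circ^d},\mathcal X\rangle$ over $\mathcal X\in C$. Any necessary condition obtained by testing against a smooth subfamily $\{\mathbf y^{\circ^d}:\|\mathbf y\|=1\}\subset C$ will therefore still hold at $\mathbf y=\mathbf x$.

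To carry this out, I would first verify that $\mathbf y^{\circ^d}\in C$ for every unit $\mathbf y\in\mathbb R^n$: in the even case $\boldsymbol{Mat}(\mathbf y^{\circ^d}) = \mathbf y^{\otimes^{d/2}}\mathbf y^{\otimes^{d/2}\top}$ is symmetric, rank-one, with trace $\|\mathbf y\|^d = 1$; in the odd case the matricization factors as $\mathbf y^{\otimes^{\lfloor d/2\rfloor}}\mathbf y^{\otimes^{\lceil d/2\rceil}\top}$ and $\|\mathbf y^{\circ^d}\|_F = \|\mathbf y\|^d = 1$. Invoking Proposition \ref{prop:sym_invariance} together with the second line $\boldsymbol{Sym}(\Lambda^*) = \mathcal A$ of \eqref{eq:kkt_nonconvex_reformulation}, the restricted functional becomes
\[
h(\mathbf y) := \langle\Lambda^*,\mathbf y^{\circ^d}\rangle + \tau\langle\mathbf x^{\circ^d},\mathbf y^{\circ^d}\rangle = \langle\mathcal A,\mathbf y^{\circ^d}\rangle + \tau(\mathbf y^\top\mathbf x)^d,
\]
which is smooth in $\mathbf y$ and, by the maximum property above, attains its maximum over the unit sphere of $\mathbb R^n$ at $\mathbf y=\mathbf x$.

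The conclusion will then follow from the standard Lagrange rule $\nabla h(\mathbf x) = \mu\mathbf x$ for some $\mu\in\mathbb R$ (the sphere constraint is regular at $\mathbf x$ since $\|\mathbf x\|=1\neq 0$). A direct calculation yields $\nabla h(\mathbf y) = d\,\mathcal A\mathbf y^{\circ^{d-1}} + d\tau(\mathbf y^\top\mathbf x)^{d-1}\mathbf x$, where the first term uses the symmetry of $\mathcal A$; evaluating at $\mathbf y=\mathbf x$ and using $\mathbf x^\top\mathbf x = 1$ gives $d\,\mathcal A\mathbf x^{\circ^{d-1}} + d\tau\mathbf x = \mu\mathbf x$, i.e., $\mathcal A\mathbf x^{\circ^{d-1}} = \sigma\mathbf x$ with $\sigma := \mu/d - \tau$. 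The only subtle point is conceptual rather than technical: one must recognize that restricting the variational inequality from all of $C$ to the strictly smaller subfamily $\{\mathbf y^{\circ^d}\}$ (recall that $C$ admits rank-one matricizations not arising from symmetric tensors) is legitimate for deriving a \emph{necessary} condition, and that this particular subfamily is chosen precisely because it encodes the eigenvector geometry of the original spherical polynomial problem.
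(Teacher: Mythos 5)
Your proof is correct and takes essentially the same route as the paper: both first use the $\mathcal X$-variational inequality of \eqref{eq:kkt_nonconvex_reformulation} together with $\boldsymbol{Sym}(\Lambda^*)=\mathcal A$ to deduce that $\mathcal X^*=\mathbf x^{\circ^d}$ globally maximizes $\langle \mathcal A+\tau\mathcal X^*,\mathbf y^{\circ^d}\rangle$ over the unit sphere, and then read off the eigenvalue equation from first-order stationarity, absorbing the extra $\tau\mathbf x$ term into the multiplier. The only difference is cosmetic: the paper phrases step one as ``$\mathcal X^*$ is optimal for the shifted tensor $\mathcal A_\tau=\mathcal A+\tau\mathcal X^*$'' and cites the standard spherical eigenvalue characterization, whereas you spell out the restricted functional $h(\mathbf y)=\langle\mathcal A,\mathbf y^{\circ^d}\rangle+\tau(\mathbf y^\top\mathbf x)^d$ and carry out the Lagrange-multiplier computation explicitly.
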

\begin{proof}
	Denote $\mathcal A_\tau:= \mathcal A+\tau\mathcal X^*$ and $\Lambda_\tau := \Lambda^*+\tau\mathcal X^*$. Similar to the proof of Theorem \ref{th:global_solution}, we can show that $\mathcal X^*$ maximizes $\langle \mathcal A_\tau,\mathcal X\rangle$ over all $\mathcal X$ feasible to \eqref{prob:original4} (or \eqref{prob:original3}). This implies that   $\mathcal A_\tau \mathbf x^{\circ^{d-1}} = \sigma_\tau\mathbf x$ with $\sigma_\tau = \langle\mathcal A_\tau,\mathcal X^*\rangle$, which is essentially $\mathcal A\mathbf x^{\circ^{d-1}} = \sigma\mathbf x$ with $\sigma=\sigma_\tau - \tau$, as desired.
\end{proof}



Even though Proposition \ref{prop:kkt_withtau} tells us that   a   tuple  $\{\mathcal X^*,\mathcal Y^*,\Lambda^*\}$  of \eqref{eq:kkt_nonconvex_reformulation} might not satisfy \eqref{eq:kkt_nonconvex_reformulation_notau}, the following results illustrate that   if the leading eigenvalue of $\boldsymbol{Mat}(\Lambda^*)$ is simple, and $\sigma$ is properly chosen, then $\{\mathcal X^*,\mathcal Y^*,\Lambda^*\}$ is still a solution to \eqref{eq:kkt_nonconvex_reformulation_notau}. 
\begin{theorem}\label{prop:proof:1} Let $d$ be even.	Let $\{ \mathcal X^*,\mathcal Y^*,\Lambda^* \}$ satisfy the system \eqref{eq:kkt_nonconvex_reformulation}. Let $\{ \sigma_i \},i=1,\ldots,n^{d/2}$ denote the eigenvalues of    $-\boldsymbol{Mat}(\Lambda^*)$, arranged in a descending order; assume that the smallest eigenvalue $\sigma_{n^{d/2}} $ is a simple root. If $\tau$ satisfies
	\begin{equation}\label{eq:proof:10}
	0<\tau < \sigma_{n^{d/2}-1} -\sigma_{n^{d/2}}   =:\beta,
	\end{equation}
	then there holds
	\begin{equation}\label{eq:proof:15}
			      \setlength\abovedisplayskip{2pt}
	\setlength\abovedisplayshortskip{2pt}
	\setlength\belowdisplayskip{2pt}
	\setlength\belowdisplayshortskip{2pt}
	\langle-\Lambda^*,\mathcal X-\mathcal X^*\rangle
	\geq 0,~~\forall \mathcal X\in  C,
	\end{equation}
	i.e., $\{\mathcal X^*,\mathcal Y^*,\Lambda^*\}$ is still a solution to \eqref{eq:kkt_nonconvex_reformulation_notau}. 
\end{theorem}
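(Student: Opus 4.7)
The plan is to recast the $\mathcal X$-update in \eqref{eq:kkt_nonconvex_reformulation} as a rank-one-perturbed bottom-eigenvector problem on the unit sphere of $\mathbb R^{n^{d/2}}$, show that $\mathbf z^*:=\mathbf x^{\otimes^{d/2}}$ is forced to be an eigenvector of the unperturbed matrix $M:=-\boldsymbol{Mat}(\Lambda^*)$, and then use the spectral gap \eqref{eq:proof:10} to pin its eigenvalue to the smallest one, $\sigma_{n^{d/2}}$.

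First I would parametrize the feasible set. By Remark \ref{rmk:1}, $\mathcal X\in C$ amounts to $\boldsymbol{Mat}(\mathcal X)=\mathbf z\mathbf z^\top$ with $\|\mathbf z\|=1$, and $\mathcal X^*=\mathbf x^{\circ^d}$ corresponds to $\mathbf z^*=\mathbf x^{\otimes^{d/2}}$. Using $\langle\mathcal U,\mathcal V\rangle=\langle\boldsymbol{Mat}(\mathcal U),\boldsymbol{Mat}(\mathcal V)\rangle$ together with $\mathcal Y^*=\mathcal X^*$, the $\mathcal X$-subproblem of \eqref{eq:kkt_nonconvex_reformulation} becomes
\[
\mathbf z^*\in\arg\min_{\|\mathbf z\|=1}\mathbf z^\top\bigl(M-\tau\mathbf z^*(\mathbf z^*)^\top\bigr)\mathbf z,
\]
while the target inequality \eqref{eq:proof:15} is, by the same reduction, equivalent to $\mathbf z^*\in\arg\min_{\|\mathbf z\|=1}\mathbf z^\top M\mathbf z$, i.e., $\mathbf z^*$ being a unit eigenvector of $M$ for $\sigma_{n^{d/2}}$.

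Next I would exploit first-order stationarity: a minimizer of a quadratic form on the unit sphere is an eigenvector with eigenvalue equal to the minimum. Hence $(M-\tau\mathbf z^*(\mathbf z^*)^\top)\mathbf z^*=\lambda^*\mathbf z^*$ together with $(\mathbf z^*)^\top\mathbf z^*=1$ gives $M\mathbf z^*=(\lambda^*+\tau)\mathbf z^*$, so $\mathbf z^*$ is a unit eigenvector of $M$ corresponding to some $\sigma_{i^*}:=\lambda^*+\tau$. Diagonalizing $M=\sum_j\sigma_j\mathbf v_j\mathbf v_j^\top$ orthonormally so that $\mathbf z^*$ lies in the eigenspace of $\sigma_{i^*}$, the spectrum of $M-\tau\mathbf z^*(\mathbf z^*)^\top$ consists of $\sigma_{i^*}-\tau$ (with eigenvector $\mathbf z^*$) together with $\sigma_j$ for $j\ne i^*$ (on $\{\mathbf z^*\}^\perp$); for $\mathbf z^*$ to realize the minimum $\lambda^*$ of this perturbed matrix, I need $\sigma_{i^*}-\tau\le\sigma_{n^{d/2}}$. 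If $i^*\ne n^{d/2}$, then simplicity of $\sigma_{n^{d/2}}$ yields $\sigma_{i^*}\ge\sigma_{n^{d/2}-1}$, whence $\tau\ge\sigma_{n^{d/2}-1}-\sigma_{n^{d/2}}=\beta$, contradicting \eqref{eq:proof:10}. Therefore $i^*=n^{d/2}$, and \eqref{eq:proof:15} follows immediately.

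The main obstacle is controlling eigenvalue crossings induced by the rank-one shift $-\tau\mathbf z^*(\mathbf z^*)^\top$. Without both the simpleness of $\sigma_{n^{d/2}}$ (which ensures $\beta>0$) and the strict bound $\tau<\beta$, an excited eigenvector of $M$ could be dragged by the shift down to or below the ground state and still satisfy the perturbed stationarity, failing the conclusion; the strict inequality $\tau<\beta$ is exactly the Weyl-type clearance that prevents this collision and keeps $\mathbf z^*$ inside the smallest-eigenvalue eigenspace of $M$.
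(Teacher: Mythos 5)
Your proof is correct and follows essentially the same route as the paper's. Both arguments start from the observation that the $\mathcal X$-subproblem reduces to minimizing $\mathbf z^\top\bigl(-\boldsymbol{Mat}(\Lambda^*)-\tau\mathbf z^*(\mathbf z^*)^\top\bigr)\mathbf z$ over the unit sphere, deduce from stationarity that $\mathbf z^*$ must be an eigenvector of $-\boldsymbol{Mat}(\Lambda^*)$, and then derive a contradiction with the spectral-gap condition $\tau<\beta$ if that eigenvector is not the bottom one; the paper exhibits the bottom eigenvector $\mathbf y$ explicitly to produce the contradiction, while you read the same contradiction off the diagonalized spectrum of the rank-one shifted matrix, which is only a cosmetic difference.
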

\begin{proof}
When $d$ is even, from the form of \eqref{eq:kkt_nonconvex_reformulation}, without loss of generality we can assume that the matrix $\boldsymbol{ Mat }(\Lambda^*)$ is symmetric. Then the $\mathcal X$-subproblem of \eqref{eq:kkt_nonconvex_reformulation} is essentially finding the smallest eigenvalue of $-\boldsymbol{ Mat }(\Lambda^*+\tau\mathcal Y^*)$. Let $\sigma^\prime_{\min}$ be the smallest eigenvalue of $-\boldsymbol{Mat}(\Lambda^*+\tau\mathcal Y^*)$, namely, $-\boldsymbol{Mat}(\Lambda^*+\tau\mathcal X^*)$. Since $\mathcal X^* \in C$, we can write $\boldsymbol{Mat}(\mathcal X^*) =  \mathbf z^*\mathbf z^{*\top}$, with $\mathbf z^*\in\mathbb R^{n^{d/2}},~\|\mathbf x^*\|=1$. One observes that
	\begin{eqnarray}\label{eq:proof:16}
		\langle -\Lambda^*-\tau\mathcal X^*,\mathcal X^*\rangle = \sigma^\prime_{\min} &\Leftrightarrow& -( \boldsymbol{Mat}(\Lambda^*)+\tau \mathbf z^*\mathbf z^{*\top})\mathbf z^* = \sigma_{\min}^\prime\mathbf z^*\nonumber\\
		&\Leftrightarrow& -\boldsymbol{Mat}(\Lambda^*)\mathbf z^* = (\tau+\sigma_{\min}^\prime)\mathbf z^*,
	\end{eqnarray}
	telling us that $\mathbf z^*$ is also an eigenvector of $-\boldsymbol{Mat}(\Lambda^*)$. In what follows, we assert that    $\mathbf z^*$ corresponds to $\sigma_{n^{d/2}}$, namely, $\mathcal X^*$ is optimal to $\min_{\mathcal X\in C}\langle-\Lambda^*,\mathcal X\rangle $. Otherwise, as   $\sigma_{n^{d/2}} $     is simple, it follows from \eqref{eq:proof:16} that $\tau+\sigma^\prime_{\min}\neq \sigma_{n^{d/2}}$; then we must have
	\begin{eqnarray*}
		\tau+\sigma_{\min}^\prime &\geq& \sigma_{n^{d/2}-1} \\
		\Leftrightarrow ~~~\sigma_{\min}^\prime &\geq& \sigma_{n^{d/2}-1}  - \tau 
		 >  \sigma_{n^{d/2}-1} - (\sigma_{n^{d/2}-1} -\sigma_{n^{d/2} }) 
		 =  \sigma_{n^{d/2} }. 
	\end{eqnarray*}
	On the other hand, assume that $\mathbf y$ is the eigenvector corresponding to $\sigma_{ n^{d/2} }$ of $-\boldsymbol{Mat}(\Lambda^*)$. Then $\mathbf y^{\top}\mathbf z^*=0 \Leftrightarrow \langle \boldsymbol{Mat}(\mathcal X^*),\mathbf y\mathbf y^{\top}\rangle = 0$, and so
	\begin{equation*}
	\langle -\boldsymbol{Mat}(\Lambda^*+ \tau X^*),\mathbf y\mathbf y^{\top}\rangle  =\sigma_{ n^{d/2} } < \sigma^\prime_{\min} = \langle -\Lambda^*-\tau\mathcal X^*,\mathcal X^*\rangle,
	\end{equation*}
	which contradicts that $\mathcal X^*$ is optimal to $\min_{\mathcal X\in C}\langle -\Lambda^*-\tau  \mathcal X^* , \mathcal X \rangle  $. As a result, $\mathbf x^*$ corresponds to the smallest eigenvalue of $-\boldsymbol{Mat}(\Lambda^*)$, and hence \eqref{eq:proof:15} is true. 
\end{proof}

The nonconvex ADMM for solving \eqref{prob:relax3} is presented as follows. Note that when $d$ is even,   the initializer $\Lambda^0$ should satisfy that $\boldsymbol{ Mat }(\Lambda^0) \in\mathbb S^{n^{d/2}\times n^{d/2}  }$ when $d$ is even; and $\mathcal Y^0\in\mathbb S^{n^d}$. This together with the definition of the algorithm yields that $\boldsymbol{ Mat }(\Lambda^{k} + \tau\mathcal Y^{k})$ is always a symmetric matrix for all $k$, and so
  the $\mathcal X$-subproblem amounts to a symmetric matrix eigenvalue problem. Usually we set $\Lambda^0=\mathcal A$.

 \begin{boxedminipage}{0.97\textwidth}\small
Nonconvex ADMM for solving \eqref{prob:relax3}/\eqref{prob:original4}
	\begin{eqnarray} \label{alg:admm}
	\mathcal X^{k+1} &\in& \arg\min_{\mathcal X\in C   } L_\tau(\mathcal X,\mathcal Y^{k} ,\Lambda^{k}    )\Leftrightarrow   \langle -\Lambda^{k} - \tau\mathcal Y^{k},\mathcal X - \mathcal X^{k+1}\rangle\geq 0,~\forall \mathcal X\in C,\nonumber\\
	&\Leftrightarrow &\mathcal X^{k+1}\in \arg\max_{\mathcal X\in C}  \langle  \Lambda^{k} + \tau\mathcal Y^{k},  \mathcal X^{k+1}\rangle \nonumber\\
	& \Leftrightarrow& \boldsymbol{ Mat }(\mathcal X^{k+1})=  \left\{  \begin{array}{lr}
	\mathbf x\mathbf x^\top ,~\mathbf x~{\rm is~a~leading~eigenvector}&\\
	 ~~~~~~~~~~~~~~~~~~~~{\rm~of~} \boldsymbol{Mat}( \Lambda^{k} + \tau\mathcal Y^{k}) & d~{\rm  even},\\  
	\mathbf x_1\mathbf x_2^\top ,~(\mathbf x_1,\mathbf x_2)~{\rm is~a~leading~singular~ vector ~pair}&\\
~~~~~~~~~~~~~~~~~~~~{\rm~of~} \boldsymbol{Mat}( \Lambda^{k} + \tau\mathcal Y^{k}) & d~{\rm  odd},\\  
	\end{array}  
	\right.  \nonumber\\
	\mathcal Y^{k+1} &=& \arg\min_{\mathcal Y\in \mathbb S^{n^d}}L_\tau(\mathcal X^{k+1}, \mathcal Y,\Lambda^{k})\\
	&\Leftrightarrow &\langle -\mathcal A + \Lambda^{k} + \tau(\mathcal Y^{k+1}-\mathcal X^{k+1}), \mathcal Y-\mathcal Y^{k+1}\rangle\geq 0, ~\forall \mathcal Y\in \mathbb S^{n^d} ,  \nonumber\\
		&\Leftrightarrow &\mathcal Y^{k+1} = \tau^{-1}\boldsymbol{Sym}( \mathcal A - \Lambda^{k} + \tau\mathcal X^{k+1} ),\nonumber\\
	\Lambda^{k+1} &=& \Lambda^{k} - \tau(\mathcal X^{k+1} - \mathcal Y^{k+1}).\nonumber
	\end{eqnarray}
\end{boxedminipage}

Clearly, if $\lim_{k\rightarrow\infty} \mathcal X^k = \lim_{k\rightarrow \infty}\mathcal Y^k = \mathcal X^*=\mathcal Y^*$, then according to Theorem \ref{prop:equivalance}, the resulting tensor is of rank-1, which is feasible to  \eqref{prob:original4}.
\begin{remark}\label{rmk:sec:convergence:1}~\\
	$~~~~~\,\mathcal X$-subproblem: From the definition of $C$ and Remark  \ref{rmk:1}, it  amounts to computing   the leading eigenvalue/singular value of the matrix $\boldsymbol{ Mat }(\Lambda^k+\tau\mathcal Y^k)$. We dot not need to increase $d$ when $d$ is odd.
	
$\mathcal Y$-subproblem: The variational inequality   holds iff $\boldsymbol{ Sym }(-\mathcal A + \Lambda^k + \tau(\mathcal Y^{k+1} - \mathcal X^{k+1})) = 0$, which, together with that $\mathcal Y^{k+1}\in\mathbb S^{n^d}$ yields that $\mathcal Y^{k+1} = \tau^{-1}\boldsymbol{Sym}( \mathcal A - \Lambda^{k} + \tau\mathcal X^{k+1} )$.

The dominant computational complexity of each iterate of the ADMM is   the $\mathcal X$-subproblem, which     has the computational complexity $O(n^d)$ in theory,   which indicates that the algorithm should be efficient and scalable.

	By noticing the last relation, one obtains $\mathcal Y^{k+1} = \tau^{-1}\boldsymbol{Sym}(\mathcal A -\Lambda^{k+1} + \tau\mathcal Y^{k+1})$, which yields $\boldsymbol{Sym}(\Lambda^{k+1})=\mathcal A$ for $k=0,1,\ldots$; then it follows again from the $\mathcal Y$-subproblem that $\mathcal Y^{k+1} = \tau^{-1}\boldsymbol{Sym}(\mathcal A-\Lambda^{k}) + \boldsymbol{Sym}(\mathcal X^{k+1}) = \boldsymbol{Sym}(\mathcal X^{k+1})$. Based on these observations, the nonconvex ADMM can be simplied as:
 		
	 \begin{boxedminipage}{0.95\textwidth}\small
	 	Equivalent form of ADMM \eqref{alg:admm}:
	\begin{equation*}\small
\mathcal X^{k+1}\in\arg\min_{\mathcal X\in C} \langle -\Lambda^{k}-\tau\boldsymbol{Sym}(\mathcal X^{k}),\mathcal X\rangle,~~\Lambda^{k+1} = \Lambda^{k} - \tau(\mathcal X^{k+1}-\boldsymbol{Sym}(\mathcal X^{k+1})).
	\end{equation*}
\end{boxedminipage}
\end{remark}

\subsection{Convergence}\label{sec:conv}
The convergence of ADMM applied to nonconvex problems  was not well understood until recent years; see, e.g., \cite{li2015global,wang2019global,hong2016convergence}. Unfortunately, existing convergence results cannot be applied due to that the assumptions are not satisfied\footnote{To be more specific, using Proposition \ref{prop:sym_invariance}, \eqref{prob:relax3} can be equivalently written as $\min_{\mathcal X=\boldsymbol{ Sym }(\mathcal Y),\mathcal X\in\mathbb R^{n^d},\mathcal Y \in\mathbb R^{n^d}     } \langle -\mathcal A, \mathcal Y\rangle + I_C(\mathcal X)     $, where $I_C(\cdot)$ denotes the indicator function of the set $C$; while in the literature such as \cite{li2015global,wang2019global}, the convergence of ADMM applied to   problems of the form $\min_{A\mathbf x+B\mathbf y=0} f(\mathbf x) + g(\mathbf y)$ with $f$ being nonconvex and nonsmooth, and $g$ being nonconvex and smooth typically assumes that ${\rm Im  }(A) \subseteq {\rm Im }(B)$, where ${\rm Im}(\cdot)$ is the image of a matrix.   Clearly, in our setting, such assumption cannot be met, in that we have an  ``opposite'' situation ${\rm Im}(B)\subset {\rm Im}(A)$. }.  On the other hand, for nonconvex algorithms, commonly the best one can expect is the convergence to a stationary point. Interestingly,  we will show that  the algorithm is able to converge to a global minimizer to \eqref{prob:relax3}, namely, the leading eigenpair of $\mathcal A$ can be found by the nonconvex ADMM. 






%
%

\begin{theorem}[Global convergence to a global minimizer]\label{th:convergence_admm_even} Let $d$ be even.
Let $\{\mathcal X^{k},\mathcal Y^{k}, \Lambda^{k}   \}$ be generated by the nonconvex ADMM \eqref{alg:admm}. Assume that there exists a tuple $\{ \mathcal X^*,\mathcal Y^*,\Lambda^* \}$ satisfying the KKT system \eqref{eq:kkt_nonconvex_reformulation}; without loss of generality assume that $\boldsymbol{Mat}(\Lambda^*)$ is a symmetric matrix.
Assume that the leading eigenvalue of    $\boldsymbol{Mat}(\Lambda^*)$ is a simple root.

Then, if $\tau>0$ is chosen properly small so that \eqref{eq:proof:10} holds, the primal variable  $\{\mathcal X^{k},\mathcal Y^{k} \}$ converges to $\{\mathcal X^*,\mathcal Y^* \}$ which is a global optimizer to the original problem \eqref{prob:relax3} (or \eqref{prob:original4}).
\end{theorem}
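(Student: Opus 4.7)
The plan is to combine the invariants pointed out in Remark \ref{rmk:sec:convergence:1} with Theorem \ref{prop:proof:1} to identify any limit point of the ADMM iterates with the target KKT point of the non-augmented system \eqref{eq:kkt_nonconvex_reformulation_notau}, and then use the simple-leading-eigenvalue hypothesis to upgrade subsequential convergence to full sequential convergence.

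First I would record the invariants: for every $k\geq 1$, $\boldsymbol{Sym}(\Lambda^k)=\mathcal A$ and $\mathcal Y^k=\boldsymbol{Sym}(\mathcal X^k)$, so the $\mathcal Y$-equation in \eqref{eq:kkt_nonconvex_reformulation} is already satisfied along the entire trajectory; the iteration is effectively reduced to updating $(\mathcal X^k,\Lambda^k)$ via the simplified form in Remark \ref{rmk:sec:convergence:1}. Since $\mathcal X^k\in C$ forces $\|\mathcal X^k\|_F=1$, the sequences $\{\mathcal X^k\}$ and $\{\mathcal Y^k\}$ are automatically bounded. For $\{\Lambda^k\}$, note that $\Lambda^{k+1}-\Lambda^k=-\tau(\mathcal X^{k+1}-\boldsymbol{Sym}(\mathcal X^{k+1}))$ lies in the orthogonal complement of $\mathbb S^{n^d}$, so $\Lambda^k$ is confined to the affine slice $\mathcal A+\mathbb S^{n^d,\perp}$; one still needs to rule out that its antisymmetric part drifts.

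The central step is to construct a Lyapunov function of the form
\[
V^k=\tfrac{1}{2\tau}\|\Lambda^k-\Lambda^*\|_F^2+\tfrac{\tau}{2}\|\mathcal Y^k-\mathcal Y^*\|_F^2,
\]
and show it is monotone nonincreasing, with $V^k-V^{k+1}\geq c\,(\|\mathcal X^{k+1}-\mathcal X^k\|_F^2+\|\mathcal Y^{k+1}-\mathcal Y^k\|_F^2)$ for some $c>0$ whenever $\tau<\beta$. Here the variational inequality from the $\mathcal X$-update combined with the first-order condition at $(\mathcal X^*,\Lambda^*)$ supplied by Theorem \ref{prop:proof:1} (namely, that $\mathcal X^*$ truly minimizes $\langle-\Lambda^*-\tau\mathcal X^*,\mathcal X\rangle$ over $C$) gives $\langle-\Lambda^k-\tau\mathcal Y^k,\mathcal X^{k+1}-\mathcal X^*\rangle\leq 0$ and its dual counterpart, which telescope into the required descent. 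Summing then yields $\mathcal X^{k+1}-\mathcal X^k\to 0$ and boundedness of $\Lambda^k$. Any subsequential cluster point $(\bar{\mathcal X},\bar{\mathcal Y},\bar\Lambda)$ can be shown to satisfy \eqref{eq:kkt_nonconvex_reformulation} by passing to the limit (the $\mathcal X$-argmin mapping is outer-semicontinuous in this finite-dimensional eigenvector setting), and Theorem \ref{prop:proof:1} together with Theorem \ref{th:global_solution} then promotes $\bar{\mathcal X}$ to a global optimizer of \eqref{prob:original4}.

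Full sequential convergence is obtained from the simple-eigenvalue hypothesis: by the Davis--Kahan theorem, the leading eigenvector of $\boldsymbol{Mat}(\Lambda^k+\tau\mathcal Y^k)$ is a continuous function of the matrix in a neighbourhood of $\boldsymbol{Mat}(\Lambda^*+\tau\mathcal Y^*)$, which together with the summability estimate forces $\mathcal X^k\to\mathcal X^*$, whence $\mathcal Y^k=\boldsymbol{Sym}(\mathcal X^k)\to\mathcal Y^*$. The main obstacle is the descent inequality for $V^k$: the $\mathcal X$-subproblem is a genuinely nonconvex rank-one eigenvector problem, so standard convex-ADMM bookkeeping is not available; I would overcome this by using the spectral-gap estimate $\sigma_{n^{d/2}-1}-\sigma_{n^{d/2}}=\beta>\tau$ from Theorem \ref{prop:proof:1} to produce a quadratic-growth lower bound for $\langle-\boldsymbol{Mat}(\Lambda^*)-\tau\boldsymbol{Mat}(\mathcal X^*),\,\boldsymbol{Mat}(\mathcal X)-\boldsymbol{Mat}(\mathcal X^*)\rangle$ over $C$, which is precisely what compensates for the missing convexity in the descent argument.
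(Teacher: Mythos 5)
Your plan shares the paper's two essential ingredients -- the Lyapunov function $V^k=\tfrac{1}{2\tau}\|\Lambda^k-\Lambda^*\|_F^2+\tfrac{\tau}{2}\|\mathcal Y^k-\mathcal Y^*\|_F^2$ and the spectral-gap quadratic-growth bound -- but the descent inequality you state does not match what those ingredients actually produce, and the detour through subsequential convergence plus Davis--Kahan that you build on top of it is both unnecessary and not self-contained.

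Concretely, the quadratic-growth bound coming from Lemma \ref{lem:proof:1} (which is exactly the ``spectral-gap estimate'' you invoke at the end) reads
$\langle-\Lambda^*,\mathcal X^{k+1}-\mathcal X^*\rangle\geq \tfrac{\beta}{2}\|\mathcal X^{k+1}-\mathcal X^*\|_F^2$,
i.e.\ it controls the distance of $\mathcal X^{k+1}$ \emph{to the target} $\mathcal X^*$, not the consecutive increment. When this is combined with the variational inequality of the $\mathcal X$-update (evaluated at $\mathcal X=\mathcal X^*$) and the orthogonality identities $\langle\Lambda^{k+1}-\Lambda^*,\mathcal Y^*-\mathcal Y^{k+1}\rangle=0$, $\langle\Lambda^{k+1}-\Lambda^k,\mathcal Y^k-\mathcal Y^{k+1}\rangle=0$ (which follow from $\boldsymbol{Sym}(\Lambda^k)\equiv\mathcal A$), the telescoping yields a descent bound of the form
$V^k-V^{k+1}\geq \tfrac{\tau}{2}\|\mathcal Y^{k+1}-\mathcal Y^k\|_F^2+\tfrac{1}{2\tau}\|\Lambda^{k+1}-\Lambda^k\|_F^2+\tfrac{\beta-\tau}{2}\|\mathcal X^{k+1}-\mathcal X^*\|_F^2$,
with $\|\mathcal X^{k+1}-\mathcal X^*\|_F^2$ and \emph{not} $\|\mathcal X^{k+1}-\mathcal X^k\|_F^2$ on the right. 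Summing then gives $\mathcal X^{k}\to\mathcal X^*$ directly, and $\mathcal Y^k=\boldsymbol{Sym}(\mathcal X^k)\to\mathcal Y^*$ follows. Your version, with consecutive differences, is not what the argument delivers; it also would not on its own identify the limit with $\mathcal X^*$.

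Because of that misstatement, you feel compelled to add the subsequential-cluster-point argument plus Davis--Kahan to upgrade to full convergence, but this does not close the gap. Knowing that consecutive differences vanish and that cluster points satisfy \eqref{eq:kkt_nonconvex_reformulation} does not tell you that the cluster point \emph{is} $\{\mathcal X^*,\mathcal Y^*,\Lambda^*\}$; the simplicity hypothesis is on $\boldsymbol{Mat}(\Lambda^*)$, not on arbitrary cluster points $\bar\Lambda$, and a priori nothing rules out different subsequential limits yielding different eigenvectors. Davis--Kahan requires the matrices $\boldsymbol{Mat}(\Lambda^k+\tau\mathcal Y^k)$ to be close to $\boldsymbol{Mat}(\Lambda^*+\tau\mathcal Y^*)$, which is precisely what you have not yet established. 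Once you state the descent inequality correctly -- with $\|\mathcal X^{k+1}-\mathcal X^*\|_F^2$ on the right -- the whole Davis--Kahan machinery becomes superfluous, and the proof closes exactly as in the paper.

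A small further remark: you describe Theorem \ref{prop:proof:1} as ``supplying the first-order condition that $\mathcal X^*$ minimizes $\langle-\Lambda^*-\tau\mathcal X^*,\mathcal X\rangle$ over $C$''. That condition is what you already assume as part of \eqref{eq:kkt_nonconvex_reformulation}. What Theorem \ref{prop:proof:1} actually delivers, under simplicity and $\tau<\beta$, is the \emph{stronger} statement that $\mathcal X^*$ minimizes $\langle-\Lambda^*,\mathcal X\rangle$ over $C$ (i.e.\ that $\{\mathcal X^*,\mathcal Y^*,\Lambda^*\}$ solves the $\tau$-free system \eqref{eq:kkt_nonconvex_reformulation_notau}), and this is precisely what enables the application of the quadratic-growth lemma at $-\boldsymbol{Mat}(\Lambda^*)$.
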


Some remarks are presented first.

1. Let $\mathcal X^* = \mathbf x^{\circ^d}$. Then $\mathbf x$ is a leading eigenvalue of $\mathcal A$.

	2. Under the hypothesis,  Theorem \ref{th:global_solution} shows that $\{ \mathcal X^*,\mathcal Y^*,\Lambda^* \}$ satisfies \eqref{eq:kkt_nonconvex_reformulation_notau} as well. Concerning the hypothesis on  $\Lambda^*$, it is not easy to check it a prior, although in practice it is commonly   observed that such $\Lambda^*$ exists.  We will further study this issue in Section \ref{sec:uniqueness}. 

3. In practice, let $\{\overline{\mathcal X},\overline{\mathcal Y},\overline{\Lambda} \}$ be the output of the algorithm. Then we can use the optimality condition \eqref{eq:kkt_nonconvex_reformulation_notau} to determine if $\overline{\mathcal X}$ is optimal to \eqref{prob:original4}, namely, the algorithm has the ability to tell us that $\overline{\mathcal X}$ is optimal if  $ \langle -\overline{\Lambda}, \mathcal X-\overline{\mathcal X}\rangle \geq 0,\forall \mathcal X\in C$.   It should be also pointed out that, such inequality is only   sufficient   to determine the optimality: If the aforementioned inequality does not hold, then $\overline{\mathcal X}$ might still be optimal to \eqref{prob:original4}, which is often observed in the experiments. This is because even if the assumptions of Theorem \ref{th:convergence_admm_even} are met and $\mathcal X^k\rightarrow\mathcal X^*$, $\Lambda^k$ might converge to some point other than $\Lambda^*$ (the proof also does not tell that what $\lim_{k\rightarrow\infty}\Lambda^k$ is). Overall, \textit{the nonconvex ADMM can identify the optimal solution itself to a certain extent}.


4. Although the theorem requires a small $\tau$ to ensure the convergence in theory, it is   impractical if  $\tau$ is chosen small. Nevertheless, we have observed that for a large $\tau$, the algorithm still converges, and can find the global minimizer in a large chance. 

%
%
%
%

The following lemma is crucial for the convergence.
\begin{lemma}\label{lem:proof:1}
	Let $B\in \mathbb R^{m\times m}$ be a symmetric matrix with its   eigenvalues $\sigma_1,\ldots,\sigma_{m}$ being arranged in a descending order; let $\mathbf x_m$ be the normalized eigenvector   corresponding to $\sigma_{m}$. For any $\mathbf x\in \mathbb R^m,\mathbf \|\mathbf x\|=1 $, there holds
	\begin{equation*}
	      \setlength\abovedisplayskip{2pt}
	\setlength\abovedisplayshortskip{2pt}
	\setlength\belowdisplayskip{2pt}
	\setlength\belowdisplayshortskip{2pt}
	\langle B,\mathbf x\mathbf x^{\top} -  \mathbf x_m\mathbf x^{\top}_m \rangle\geq \frac{\sigma_{m-1} - \sigma_{m} }{2}\| \mathbf x\mathbf x^{\top} - \mathbf x_m\mathbf x^{\top}_m\|_F^2.
	\end{equation*}
\end{lemma}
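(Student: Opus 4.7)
The plan is to reduce both sides of the inequality to expressions in the coordinates of $\mathbf{x}$ with respect to the eigenbasis of $B$, and then compare them term by term. Since $B$ is symmetric, write its spectral decomposition $B = \sum_{i=1}^{m} \sigma_i \mathbf{x}_i \mathbf{x}_i^\top$, where $\{\mathbf{x}_i\}_{i=1}^m$ is an orthonormal eigenbasis, and expand the arbitrary unit vector as $\mathbf{x} = \sum_{i=1}^m c_i \mathbf{x}_i$ with $\sum_i c_i^2 = 1$. The key observation that drives everything is that $c_m = \mathbf{x}^\top\mathbf{x}_m$ measures the overlap of $\mathbf{x}$ with the target eigenvector, and both sides of the inequality are going to turn out to be controlled by the single scalar $1-c_m^2$.

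First I would compute the left-hand side. Using $\langle B, \mathbf{x}\mathbf{x}^\top\rangle = \mathbf{x}^\top B\mathbf{x} = \sum_{i=1}^m \sigma_i c_i^2$ and $\langle B, \mathbf{x}_m\mathbf{x}_m^\top\rangle = \sigma_m$, the difference is $\sum_{i=1}^m (\sigma_i-\sigma_m)c_i^2$, and the $i=m$ term drops out. Because $\sigma_i \geq \sigma_{m-1}$ for every $i \leq m-1$, each coefficient satisfies $\sigma_i - \sigma_m \geq \sigma_{m-1}-\sigma_m \geq 0$, so the left-hand side is bounded below by $(\sigma_{m-1}-\sigma_m)\sum_{i=1}^{m-1} c_i^2 = (\sigma_{m-1}-\sigma_m)(1-c_m^2)$.

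Next I would compute the right-hand side. Expanding the Frobenius norm as a trace gives $\|\mathbf{x}\mathbf{x}^\top - \mathbf{x}_m\mathbf{x}_m^\top\|_F^2 = \|\mathbf{x}\|^4 - 2(\mathbf{x}^\top\mathbf{x}_m)^2 + \|\mathbf{x}_m\|^4 = 2(1-c_m^2)$, using $\|\mathbf{x}\|=\|\mathbf{x}_m\|=1$. Multiplying by $(\sigma_{m-1}-\sigma_m)/2$ produces exactly $(\sigma_{m-1}-\sigma_m)(1-c_m^2)$, which coincides with the lower bound just derived for the left-hand side, so the claimed inequality follows.

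There is no real obstacle here: the proof is a one-paragraph eigendecomposition argument, and in fact the inequality is tight whenever $\mathbf{x}$ lies in the span of $\mathbf{x}_{m-1}$ and $\mathbf{x}_m$. The only point worth flagging is that the simplicity of $\sigma_m$ is not needed for the lemma itself, though it will matter in its intended application (where strict positivity of $\sigma_{m-1}-\sigma_m$ is what forces contraction toward the extremal eigenvector); an analogous statement for a symmetric matrix $B$ with $\mathbf{x}_m$ replaced by any unit vector attaining $\sigma_m$ goes through identically.
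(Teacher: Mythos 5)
Your proof is correct and follows essentially the same route as the paper: both expand $\mathbf{x}$ in the eigenbasis of $B$, compute $\langle \mathbf{x}\mathbf{x}^\top,\mathbf{x}_m\mathbf{x}_m^\top\rangle = c_m^2$ and $\|\mathbf{x}\mathbf{x}^\top-\mathbf{x}_m\mathbf{x}_m^\top\|_F^2 = 2(1-c_m^2)$, and then use $\sigma_i \geq \sigma_{m-1}$ for $i\le m-1$ together with $\sum_i c_i^2=1$. The paper phrases it as showing the difference of the two sides reduces to $\sum_{i=1}^{m-1}c_i^2(\sigma_i-\sigma_{m-1})\geq 0$, while you lower-bound the left side directly; these are the same computation presented in a different order, and your closing observations about tightness and about simplicity of $\sigma_m$ being unnecessary for the lemma itself are accurate.
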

\begin{proof}
	Let $B= U\Sigma U^{\top}$ be an EVD of $B$, with $U=[\mathbf u_1,\ldots,\mathbf u_m  ]\in \mathbb R^{m\times m}$ being an orthonormal matrix, $\Sigma ={\rm diag}(\sigma_1,\ldots,\sigma_{m-1}, \sigma_m  )\in\mathbb R^{m\times m}$ being a diagonal matrix, where $\sigma_i,1\leq i\leq m$ are arranged in a descending order. Then   any normalized $\mathbf x\in\mathbb R^m$ can be expressed as
	$\mathbf x = \sum^m_{i=1}\alpha_i \mathbf u_i,~{\rm with}~ \sum^m_{i=1}\alpha_i^2=1$.
	With the above expressions at hand, and noticing that 
	$
	\langle \mathbf x\mathbf x^{\top}, \mathbf x_m\mathbf x^{\top}_m\rangle = \alpha_m^2 
	$, we have
	\begin{small}
	\begin{eqnarray*}
		      \setlength\abovedisplayskip{1pt}
		\setlength\abovedisplayshortskip{1pt}
		\setlength\belowdisplayskip{1pt}
		\setlength\belowdisplayshortskip{1pt}
		\mathbf x^{\top}B\mathbf x - \mathbf x^{\top}_mB\mathbf x_m - \frac{\sigma_{m-1} - \sigma_{m} }{2}\| \mathbf x\mathbf x^{\top} - \mathbf x_m\mathbf x^{\top}_m\|_F^2
		&=& \sum^m_{i=1}\alpha_i^2\sigma_i - \sigma_m - (\sigma_{m-1} - \sigma_m)(1-\alpha_m^2) \\
		&=& \sum^{m-1}_{i=1}\alpha_i^2\sigma_i -(1-\alpha_m^2)\sigma_{m-1} \\
		&=& \sum^{m-1}_{i=1}\alpha_i^2(\sigma_i-\sigma_{m-1}) \geq 0,
	\end{eqnarray*}
\end{small}
	where the last equality follows from $\sum^m_{i=1}\alpha_i^2=1$. The proof has been completed.
\end{proof}

\begin{proof}[Proof of Theorem \ref{th:convergence_admm_even}]  
Let   $\sigma_{n^{d/2}}$ and $\sigma_{n^{d/2-1}}$ respectively denote the smallest and the second smallest eigenvalues of $-\boldsymbol{Mat}(\Lambda^*)$. It follows from   the hypothesis that $\sigma_{n^{d/2}-1 } > \sigma_{n^{d/2} } $. Assume that $\tau$ satisfies $0<\tau<   \beta=\sigma_{n^{d/2}-1 } - \sigma_{n^{d/2} } $.
According to Theorem \ref{prop:proof:1},  $\mathcal X^*\in\arg\min_{\mathcal X\in C}\langle-\Lambda^*,\mathcal X\rangle = \langle-\boldsymbol{Mat}(\Lambda^*),\boldsymbol{Mat}(\mathcal X) \rangle$ under the assumptions.	By   Lemma \ref{lem:proof:1} and recalling the definition of $C$ and Remark \ref{rmk:1}, we have  
	\begin{equation}\label{eq:proof:2}
			      \setlength\abovedisplayskip{2pt}
	\setlength\abovedisplayshortskip{2pt}
	\setlength\belowdisplayskip{2pt}
	\setlength\belowdisplayshortskip{2pt}
	\langle-\Lambda^* ,\mathcal X^{k+1} - \mathcal X^{*}\rangle \geq  \frac{\beta}{2} \|\mathcal X^{k+1}-\mathcal X^*\|_F^2.
	\end{equation}
	On the other hand,	from   the first inequality of \eqref{alg:admm} with  $\mathcal X:=\mathcal X^*$, we have
	\begin{eqnarray}\langle  - \Lambda^{k} - \tau   \mathcal Y^{k} +\tau \mathcal X^{k+1}   , \mathcal X^* - \mathcal X^{k+1}\rangle &=& \langle  - \Lambda^{k} - \tau   \mathcal Y^{k},\mathcal X^* - \mathcal X^{k+1}\rangle  +\tau\langle \mathcal X^{k+1}   , \mathcal X^* - \mathcal X^{k+1}\rangle \nonumber\\
	 &\geq& \tau \langle \mathcal X^{k+1} ,\mathcal X^*-\mathcal X^{k+1}\rangle\nonumber\\
	&=&-\frac{\tau }{2}\|\ \mathcal X^{k+1} - \mathcal X^*\|_F^2, \label{eq:proof:1}
	\end{eqnarray}
where the last equality holds because $\mathcal X^{k+1}$ and $\mathcal X^*$ are normalized. Using the last equality in \eqref{alg:admm}, $-\Lambda^{k} + \tau\mathcal X^{k+1}$ can be replaced by $-\Lambda^{k+1}+ \tau\mathcal Y^{k+1}$ in the left-hand side of \eqref{eq:proof:1}. Adding \eqref{eq:proof:1} together with \eqref{eq:proof:2} gives
\begin{small}
	\begin{eqnarray}\label{eq:proof:11}
	      \setlength\abovedisplayskip{2pt}
	\setlength\abovedisplayshortskip{2pt}
	\setlength\belowdisplayskip{2pt}
	\setlength\belowdisplayshortskip{2pt}
	\langle \Lambda^{k+1}-\Lambda^*, \mathcal X^{k+1}-\mathcal X^*\rangle + \tau\langle \mathcal Y^{(k )}-\mathcal Y^{k+1}, \mathcal X^{k+1}-\mathcal X^*\rangle&\geq&  
     {  \frac{\beta-\tau}{2} } \|\ \mathcal X^{k+1} - \mathcal X^*\|_F^2.
	\end{eqnarray}
	\end{small}
	Next, since $\tau<\beta$, Theorem \ref{prop:proof:1} implies that $\{\mathcal X^*,\mathcal Y^*,\Lambda^* \}$ meets the optimality condition \eqref{eq:kkt_nonconvex_reformulation_notau}, where Theorem \ref{th:global_solution} asserts that $\boldsymbol{Sym}(\Lambda^*)=\mathcal A$. On the other hand, in Remark \ref{rmk:sec:convergence:1} we have discussed that $\boldsymbol{Sym}(\Lambda^{k+1})=\mathcal A$ for $k=0,1,\ldots$. It then follows from $\mathcal Y^*,\mathcal Y^{k}\in\mathbb S^{n^d}$ and Proposition \ref{prop:sym_invariance} that
	\begin{equation}\label{eq:proof:12}
			      \setlength\abovedisplayskip{2pt}
	\setlength\abovedisplayshortskip{2pt}
	\setlength\belowdisplayskip{2pt}
	\setlength\belowdisplayshortskip{2pt}
	\langle \Lambda^{k+1} - \Lambda^*,\mathcal Y^{*}-\mathcal Y^{k+1} \rangle =\langle \boldsymbol{Sym}(\Lambda^{k+1} - \Lambda^*), \mathcal Y^{*}-\mathcal Y^{k+1}\rangle= 0;
	\end{equation}
\begin{equation}\label{eq:proof:13}
		      \setlength\abovedisplayskip{2pt}
\setlength\abovedisplayshortskip{2pt}
\setlength\belowdisplayskip{2pt}
\setlength\belowdisplayshortskip{2pt}
\langle \Lambda^{k+1}-\Lambda^{k},\mathcal Y^{k} - \mathcal Y^{k+1} \rangle =\langle\boldsymbol{Sym}(\Lambda^{k+1}-\Lambda^{k}),\mathcal Y^{k} - \mathcal Y^{k+1} \rangle= 0.
\end{equation}
Combining \eqref{eq:proof:11}, \eqref{eq:proof:12} and \eqref{eq:proof:13}, we have
\begin{eqnarray}
&& (\frac{\beta- \tau}{2}) \|\mathcal X^{k+1}-\mathcal X^*\|_F^2\nonumber\\
\leq && \langle \Lambda^{k+1}-\Lambda^*,\mathcal X^{k+1}-\mathcal X^*\rangle + \langle\Lambda^{k+1}-\Lambda^*, \mathcal Y^*-\mathcal Y^{k+1}\rangle\nonumber\\
 && ~~~~+\langle \mathcal Y^{k}-\mathcal Y^{k+1},\tau(\mathcal X^{k+1}-\mathcal X^*)\rangle + \langle \mathcal Y^{k} - \mathcal Y^{k+1},\Lambda^{k+1}-\Lambda^{k}\rangle \nonumber\\
 =&& \tau^{-1}\langle \Lambda^{k+1}-\Lambda^*, \Lambda^{k}-\Lambda^{k+1}\rangle + \tau\langle \mathcal Y^{k}-\mathcal Y^{k+1},\mathcal Y^{k+1}-\mathcal Y^*\rangle.\label{eq:proof:14}
\end{eqnarray}


On the other hand, we have	
\begin{small}
\begin{eqnarray}
\tau \|\mathcal Y^{k+1} -\mathcal Y^*\|_F^2 &=& \tau  \|\mathcal Y^{k} - \mathcal Y^*\|_F^2 - 2\tau \langle \mathcal Y^{k}-\mathcal Y^*,\mathcal Y^{k}-\mathcal Y^{k+1}\rangle + \tau \|\mathcal Y^{k}-\mathcal Y^{k+1}\|_F^2\nonumber\\
&=& \tau \|\mathcal Y^{k}-\mathcal Y^*\|_F^2 - \tau \|\mathcal Y^{k+1}-\mathcal Y^{k}\|_F^2 - 2\tau \langle \mathcal Y^{k+1}-\mathcal Y^{*},\mathcal Y^{k}-\mathcal Y^{k+1}\rangle.\label{eq:proof:7}
\end{eqnarray}	
\end{small}
Similarly, 
\begin{eqnarray}
\tau^{-1}\|\Lambda^{k+1}-\Lambda^*\|_F^2 &=& \tau^{-1}\|\Lambda^{k}-\Lambda^*\|_F^2 - \tau^{-1}\|\Lambda^{k+1}-\Lambda^{k}\|_F^2   \nonumber\\
&&~~~~~~~~-2\tau^{-1}\langle \Lambda^{k+1}-\Lambda^*,\Lambda^{k}-\Lambda^{k+1}\rangle. \label{eq:proof:8}
\end{eqnarray}
	Summing \eqref{eq:proof:7} and \eqref{eq:proof:8} together, and using \eqref{eq:proof:14}, we obtain
	\begin{eqnarray}
	\tau \|\mathcal Y^{k+1}-\mathcal Y^{*}\|_F^2 + \tau^{-1}\|\Lambda^{k+1}-\Lambda^*\|_F^2 &\leq& \tau \left( \|\mathcal Y^{k}-\mathcal Y^*\|_F^2 - \|\mathcal Y^{k+1}-\mathcal Y^{k}\|_F^2     \right)\nonumber\\
	&& + \tau^{-1}\left(\|\Lambda^{k}-\Lambda^*\|_F^2 -  \|\Lambda^{k+1}-\Lambda^{k}\|_F^2\right)\nonumber\\
	&& -  (\frac{\beta- \tau}{2}) \|\mathcal X^{k+1}-\mathcal X^*\|_F^2   .\label{eq:proof:30}
	\end{eqnarray}
	The above inequality shows that $\{ \mathcal Y^{k},\Lambda^{k} \}$ is bounded. On the other side, rearranging and combining terms yields
	\begin{eqnarray*}
&& \tau \|\mathcal Y^{k+1}-\mathcal Y^{k}\|_F^2 + \tau^{-1}\|\Lambda^{k+1}-\Lambda^{k}\|_F^2 + (\frac{\beta- \tau}{2}) \|\mathcal X^{k+1}-\mathcal X^*\|_F^2\\
 &\leq& \tau \left(  \|\mathcal Y^{k}-\mathcal Y^*\|_F^2 -\|\mathcal Y^{k+1}-\mathcal Y^{*}\|_F^2   \right) 
  + \tau^{-1}\left( \|\Lambda^{k}-\Lambda^*\|_F^2 - \|\Lambda^{k+1}-\Lambda^*\|_F^2   \right) .
	\end{eqnarray*}
Summing the above inequality from $0$ to infinity, we get
\begin{small}
\begin{eqnarray*}\label{eq:proof:50}
\sum^{\infty}_{k=0}\left(\tau \|\mathcal Y^{k+1}-\mathcal Y^{k}\|_F^2 + \tau^{-1}\|\Lambda^{k+1}-\Lambda^{k}\|_F^2 + (\frac{\beta- \tau}{2}) \|\mathcal X^{k+1}-\mathcal X^*\|_F^2 \right)  <  +\infty.
\end{eqnarray*}	
\end{small}
Thus we have
$$\mathcal Y^{k+1}-\mathcal Y^{k}\rightarrow 0, \Lambda^{k+1}-\Lambda^{k}\rightarrow 0, \mathcal X^{k+1}-\mathcal X^*\rightarrow 0, ~{\rm}~k\rightarrow\infty,$$
and so $\lim_{k\rightarrow \infty}\mathcal Y^{k}=\lim_{k\rightarrow\infty}\mathcal X^{k}=\mathcal X^*$. Since $\mathcal X^*$ satisfies the   system \eqref{eq:kkt_nonconvex_reformulation}, and $\tau$ is chosen as \eqref{eq:proof:10}, by Theorem \ref{prop:proof:1}, $\mathcal X^*$ satisfies \eqref{eq:kkt_nonconvex_reformulation_notau}, which together with Theorem \ref{th:global_solution} shows that     $\mathcal X^*$ ($\mathcal Y^*$)  is   a global optimizer of the original problem \eqref{prob:original4}. The proof has been completed. 
\end{proof}

\paragraph{Convergence with an arbitrary $\tau>0$}
It is also possible to set an arbitrary $\tau>0$ in theory. This can be done by taking a more careful estimation to the left-hand side of \eqref{eq:proof:1}. Let $\sigma_{i}(\cdot)$ denote  the $i$-th eigenvalue of a matrix arranged in a descending order. Note that $\mathcal X^{k+1}$ corresponds to     $\sigma_{n^{d/2}}(-\boldsymbol{ Mat }(\Lambda^{k} +\tau\mathcal Y^{k}))$, and also corresponds to the eigenvalue $\sigma_{n^{d/2}}(-\boldsymbol{ Mat }(\Lambda^{k} +\tau\mathcal Y^{k}))+\tau$ of $-\boldsymbol{ Mat }(\Lambda^{k}+\tau\mathcal Y^{k} -\tau\mathcal X^{k+1})$, which means that if $\sigma_{n^{d/2}}(-\boldsymbol{ Mat }(\Lambda^{k} +\tau\mathcal Y^{k}))+\tau$ is the smallest eigenvalue of $-\boldsymbol{ Mat }(\Lambda^{k}+\tau\mathcal Y^{k} -\tau\mathcal X^{k+1})$, then the left-hand side of \eqref{eq:proof:1} is nonnegative, and the proof of Theorem \ref{th:convergence_admm_even} carries over. In the following, we consider one of such cases. To this end, for $\{ \mathcal Y^*,\Lambda^*  \}\in\mathbb S^{n^d}\times\mathbb R^{n^d}$ we   define
$$\mathbb B_\tau( \{\mathcal Y^*,\Lambda^* \},\mu ):=  \{ \{\mathcal Y,\Lambda  \}\mid \tau\|\mathcal Y-\mathcal Y^*\|_F^2 + \tau^{-1}\|\Lambda -\Lambda^*\|_F^2 \leq \mu^2   \}.$$
Assume that the smallest eigenvalue of $-\boldsymbol{ Mat }(\Lambda^*)$ is a simple root. Let $\overline\mu>0$ be such that
\begin{small}
 \begin{equation}\label{eq:proof:20}
 \begin{split}
&\overline\mu=\arg\sup_{\mu>0}\max_{ \{\mathcal Y,\Lambda \}\in \mathbb B_\tau(\{\mathcal Y^*,\Lambda^* \}, \mu   ) }\|  \boldsymbol{ Mat }(\Lambda-\Lambda^* + \tau\mathcal Y  - \tau\mathcal Y^*)\|_2  \\
&~~~~~~~~~~~~~~~~~~~~~~~~~~~~~~~~~~~~~~~~\leq \frac{\sigma_{n^{d/2-1} }(-\boldsymbol{ Mat }(\Lambda^*)) - \sigma_{n^{d/2} }(-\boldsymbol{ Mat }(\Lambda^*)) }{4}.  
\end{split}
\end{equation}
\end{small}
\begin{theorem}[Local convergence to a global minimizer with arbitrary $\tau>0$]\label{th:anytau}
Let $d$ be even.	Let $\{\mathcal X^*,\mathcal Y^*,\Lambda^* \}$ satisfy \eqref{eq:kkt_nonconvex_reformulation_notau}. Assume without loss of generality  that $\boldsymbol{Mat}(\Lambda^*)$ is symmetric.  Assume that $-\boldsymbol{ Mat }(\Lambda^*)$ has a simple smallest eigenvalue. Let $\mathbb B_\tau(\{\mathcal Y^*,\Lambda^* \},\overline\mu   )$ be defined as above with $\tau>0$. If $\{\mathcal Y^0,\Lambda^0  \}\in \mathbb B_\tau(\{\mathcal Y^*,\Lambda^* \},\overline\mu   )$,
	 then $\{\mathcal X^k,\mathcal Y^k  \}$ converges to $\{\mathcal X^*,\mathcal Y^* \}$ which is a global optimizer to \eqref{prob:original4}.
\end{theorem}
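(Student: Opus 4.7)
The plan is to carry over the argument of Theorem \ref{th:convergence_admm_even} essentially line by line, but with a sharper estimate of \eqref{eq:proof:1} whose right-hand side is nonnegative rather than $-\frac{\tau}{2}\|\mathcal X^{k+1}-\mathcal X^*\|_F^2$. As observed in the paragraph preceding the statement, this holds precisely when $\sigma_{n^{d/2}}(N^k)+\tau$, with $N^k:=-\boldsymbol{Mat}(\Lambda^{k}+\tau\mathcal Y^{k})$, is still the smallest eigenvalue of $M^k:=-\boldsymbol{Mat}(\Lambda^{k}+\tau\mathcal Y^{k}-\tau\mathcal X^{k+1})=N^k+\tau\,\boldsymbol{Mat}(\mathcal X^{k+1})$. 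Since $\boldsymbol{Mat}(\mathcal X^{k+1})$ is rank-one precisely in the direction of the eigenvector of $N^k$ associated with $\sigma_{n^{d/2}}(N^k)$, the rank-one update shifts only that eigenvalue and leaves the others untouched, so the condition reduces to the spectral-gap inequality $\tau\le \sigma_{n^{d/2}-1}(N^k)-\sigma_{n^{d/2}}(N^k)$.

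The key step will be to show, by induction, that this gap inequality is enforced whenever $\{\mathcal Y^k,\Lambda^k\}\in \mathbb B_\tau(\{\mathcal Y^*,\Lambda^*\},\overline\mu)$. Here I will exploit that $\mathcal Y^*=\mathcal X^*=\mathbf x^{\circ d}$ is rank-one, and that $\mathbf x^{\otimes d/2}$ is exactly the eigenvector of the smallest eigenvalue of $-\boldsymbol{Mat}(\Lambda^*)$ (by Remark \ref{rmk:2} and Theorem \ref{prop:proof:1}). Hence $-\boldsymbol{Mat}(\Lambda^*+\tau\mathcal Y^*)$ is obtained from $-\boldsymbol{Mat}(\Lambda^*)$ by shifting only its smallest eigenvalue down by $\tau$, and its smallest-to-second-smallest gap equals $\beta^*+\tau$, where $\beta^*:=\sigma_{n^{d/2}-1}(-\boldsymbol{Mat}(\Lambda^*))-\sigma_{n^{d/2}}(-\boldsymbol{Mat}(\Lambda^*))>0$. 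Applying Weyl's inequality together with the perturbation bound $\|\boldsymbol{Mat}(\Lambda^k-\Lambda^*+\tau(\mathcal Y^k-\mathcal Y^*))\|_2\le \beta^*/4$ that is built into the definition of $\overline\mu$, I obtain $\sigma_{n^{d/2}-1}(N^k)-\sigma_{n^{d/2}}(N^k)\ge (\beta^*+\tau)-2\cdot\beta^*/4=\beta^*/2+\tau>\tau$, as needed. This delivers the strengthened variational inequality $\langle -\Lambda^k-\tau\mathcal Y^k+\tau\mathcal X^{k+1},\mathcal X-\mathcal X^{k+1}\rangle\ge 0$ for all $\mathcal X\in C$, i.e., the analogue of \eqref{eq:proof:1} with zero right-hand side.

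Once this sharpening is in place, the remainder of the proof of Theorem \ref{th:convergence_admm_even} can be copied verbatim, the only change being that $\frac{\beta-\tau}{2}$ is replaced by $\frac{\beta^*}{2}$ throughout \eqref{eq:proof:11}--\eqref{eq:proof:30}. The resulting analogue of \eqref{eq:proof:30} shows that the Lyapunov function $V^k:=\tau\|\mathcal Y^k-\mathcal Y^*\|_F^2+\tau^{-1}\|\Lambda^k-\Lambda^*\|_F^2$ is monotonically decreasing and bounded below by $0$. Since $V^0\le \overline\mu^2$ by hypothesis, this closes the induction by giving $V^k\le \overline\mu^2$ and hence $\{\mathcal Y^k,\Lambda^k\}\in\mathbb B_\tau(\{\mathcal Y^*,\Lambda^*\},\overline\mu)$ for every $k$. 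Summing the inequality yields $\sum_k\|\mathcal X^{k+1}-\mathcal X^*\|_F^2<\infty$ and $\mathcal Y^{k+1}-\mathcal Y^k\to 0$; combined with the simplification $\mathcal Y^{k+1}=\boldsymbol{Sym}(\mathcal X^{k+1})$ from Remark \ref{rmk:sec:convergence:1}, one gets $\mathcal Y^k\to \boldsymbol{Sym}(\mathcal X^*)=\mathcal X^*=\mathcal Y^*$. Global optimality of $\mathcal X^*$ then follows from Theorems \ref{prop:proof:1} and \ref{th:global_solution}.

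The main obstacle is exactly this interleaved induction: the sharper variational inequality presupposes a lower bound on the spectral gap of $N^k$, which presupposes $\{\mathcal Y^k,\Lambda^k\}$ lying in $\mathbb B_\tau(\{\mathcal Y^*,\Lambda^*\},\overline\mu)$, while the ball-invariance itself only comes out of the Lyapunov descent produced by the sharper inequality. What makes the argument survive for arbitrarily large $\tau$, rather than only the small-$\tau$ regime of Theorem \ref{th:convergence_admm_even}, is the specifically useful alignment of $\mathcal Y^*$ with the bottom eigenvector of $-\boldsymbol{Mat}(\Lambda^*)$: it inflates the reference gap from $\beta^*$ to $\beta^*+\tau$, so the fixed perturbation budget $\beta^*/4$ encoded in $\overline\mu$ always leaves a residual gap that exceeds $\tau$ by the fixed amount $\beta^*/2$, regardless of how large $\tau$ is.
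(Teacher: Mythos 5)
Your proposal is correct and essentially identical to the paper's proof: the key observation that the rank-one shift by $\tau\mathcal Y^*$ inflates the bottom spectral gap of $-\boldsymbol{Mat}(\Lambda^*)$ to $\beta^*+\tau$, the Weyl-based transfer of this gap to $N^k = -\boldsymbol{Mat}(\Lambda^k+\tau\mathcal Y^k)$ using the $\beta^*/4$ budget from the definition of $\overline\mu$, the resulting sharpened variational inequality \eqref{eq:proof:31}, and the interleaved induction that couples the gap estimate to the Lyapunov descent \eqref{eq:proof:30} are precisely the paper's steps (your direct lower bound on the gap is just a cleaner packaging of the paper's separate bounds \eqref{eq:proof:19} and \eqref{eq:proof:22} plus its contradiction argument).
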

\begin{proof}
To make the proof of Theorem \ref{th:convergence_admm_even} carries over, it suffices   to show that 
	\begin{equation}\label{eq:proof:31}
\langle  - \Lambda^{k} - \tau   \mathcal Y^{k} +\tau \mathcal X^{k+ 1}   , \mathcal X  - \mathcal X^{k+ 1}\rangle \geq 0,~ \forall \mathcal X\in C 
\end{equation}
holds   in the current setting.
We first consider $k=0$, and	denote $\mathcal E:= -\Lambda^0-\tau\mathcal Y^0 - (-\Lambda^*-\tau\mathcal Y^*) $. According to Weyl's inequality, it holds that $|\sigma_{i}(-\boldsymbol{ Mat }(\Lambda^0+\tau\mathcal Y^0)) - \sigma_{i}(-\boldsymbol{ Mat }(\Lambda^*+\tau\mathcal Y^*)  )   | \leq \|\boldsymbol{ Mat }(\mathcal E)\|_2$, which together with \eqref{eq:proof:20} and the definition of $\mathcal Y^*$ yields
   \begin{small}
	\begin{eqnarray}\label{eq:proof:19}
\langle  - \Lambda^{0} - \tau   \mathcal Y^{0} +\tau \mathcal X^{ 1}   ,   \mathcal X^{ 1}\rangle &=&	\sigma_{n^{d/2}}(-\boldsymbol{ Mat }(\Lambda^0+\tau\mathcal Y^0))+\tau\\
  &\leq & \sigma_{n^{d/2} }(-\boldsymbol{ Mat }(\Lambda^*+\tau\mathcal Y^*)  ) + \|\boldsymbol{ Mat }(\mathcal E)\|_2+\tau\nonumber\\
	&=&\sigma_{n^{d/2} }(-\boldsymbol{Mat}(\Lambda^*)) - \tau + \|\boldsymbol{ Mat }(\mathcal E)\|_2 + \tau\nonumber\\
	&\leq&  \frac{\sigma_{n^{d/2-1} }(-\boldsymbol{ Mat }(\Lambda^*)) + 3\sigma_{n^{d/2} }(-\boldsymbol{ Mat }(\Lambda^*)) }{4} . \nonumber
	\end{eqnarray}
	\end{small}
	By the definition of $\mathcal X^1$ and the structure of $-\Lambda^0-\tau\mathcal Y^0+\tau\mathcal X^1$, the right-hand side of \eqref{eq:proof:19} is an   eigenvalue of $-\boldsymbol{ Mat }(\Lambda^0+\tau\mathcal Y^0-\tau\mathcal X^1)$. We  show that it is   the smallest one, i.e., \eqref{eq:proof:31} holds when $k=0$. If this is not true, then 
	\begin{equation}\label{eq:proof:39} 
		\sigma_{n^{d/2}}(-\boldsymbol{ Mat }(\Lambda^0+\tau\mathcal Y^0-\tau\mathcal X^1)) =\sigma_{n^{d/2}-1  }(-\boldsymbol{ Mat }(\Lambda^0+\tau\mathcal Y^0))\footnote{The term $\tau \boldsymbol{Mat}(\mathcal X^1)$ only shifts the smallest eigenvalue of $-\boldsymbol{Mat}(\Lambda^0+\tau\mathcal Y^0 )$. If the shift is large, then $\sigma_{n^{d/2}-1  }(-\boldsymbol{ Mat }(\Lambda^0+\tau\mathcal Y^0))$ might turn to be the smallest eigenvalue of $  -\boldsymbol{ Mat }(\Lambda^0+\tau\mathcal Y^0-\tau\mathcal X^1) $.}.\end{equation}
	 However, it follows again from the Weyl's inequality and \eqref{eq:proof:20} that
	\begin{eqnarray}\label{eq:proof:22}
	\sigma_{n^{d/2}-1  }(-\boldsymbol{ Mat }(\Lambda^0+\tau\mathcal Y^0)  ) &\geq& \sigma_{n^{d/2 }-1}(-\boldsymbol{ Mat }(\Lambda^*+\tau\mathcal Y^*)  ) - \|\boldsymbol{ Mat }(\mathcal E)\|_2\nonumber\\
	&=&\sigma_{n^{d/2 }-1}(-\boldsymbol{ Mat }(\Lambda^* )  ) - \|\boldsymbol{ Mat }(\mathcal E)\|_2\nonumber\\
	&\geq&  \frac{3\sigma_{n^{d/2-1} }(-\boldsymbol{ Mat }(\Lambda^*)) + \sigma_{n^{d/2} }(-\boldsymbol{ Mat }(\Lambda^*)) }{4},
	\end{eqnarray}
where the first equality is due to the definition of $\mathcal Y^*$.  \eqref{eq:proof:22} together with	  
	  \eqref{eq:proof:19}  shows that $\sigma_{n^{d/2}-1  }(-\boldsymbol{ Mat }(\Lambda^0+\tau\mathcal Y^0)  )$ is not the smallest eigenvalue of  $-\boldsymbol{ Mat }(\Lambda^0+\tau\mathcal Y^0-\tau\mathcal X^1)$,  which contradicts with \eqref{eq:proof:39}. Hence $\sigma_{n^{d/2}}(-\boldsymbol{ Mat }(\Lambda^0+\tau\mathcal Y^0)) + \tau$ is the smallest eigenvalue of $-\boldsymbol{ Mat }(\Lambda^0+\tau\mathcal Y^0-\tau\mathcal X^1)$, and so \eqref{eq:proof:31} holds when $k=0$. \eqref{eq:proof:31} together with \eqref{eq:proof:2} yields
	  \begin{small}
		\begin{eqnarray*}\label{eq:proof:35}
	\langle \Lambda^{k+1}-\Lambda^*, \mathcal X^{k+1}-\mathcal X^*\rangle + \tau\langle \mathcal Y^{ k  }-\mathcal Y^{k+1}, \mathcal X^{k+1}-\mathcal X^*\rangle&\geq&  
 \frac{\beta }{2} \|\ \mathcal X^{k+1} - \mathcal X^*\|_F^2,
	\end{eqnarray*}
	\end{small}
	which is similar to \eqref{eq:proof:11}. 
	Carrying on similarly, we see that \eqref{eq:proof:30} (except the coefficient of the last term of the right-hand side being $-\frac{\beta}{2}$) is valid in the current setting where $k=0$, which then implies that $\{\mathcal Y^1,\Lambda^1  \} \in \mathbb R_\tau(\{\mathcal Y^*,\Lambda^* \},\overline\mu )$ as well. Inductively, we are able to show that  \eqref{eq:proof:31} holds for all $k$ and $\{\mathcal Y^k,\Lambda^k\}\in\mathbb R_\tau(\{\mathcal Y^*,\Lambda^* \},\overline\mu )$, therefore resulting in the validness of \eqref{eq:proof:30} for all $k$, namely, $\{\mathcal X^k,\mathcal Y^k \}\rightarrow \{ \mathcal X^*,\mathcal Y^* \}$, which is a global optimizer to \eqref{prob:original4}, as desired.
\end{proof}

\paragraph{Converging to other  eigenvectors}

In any case, we have observed   extensively   that $\mathcal X^k-\mathcal Y^k\rightarrow 0$. With this phenomenon, the following results readily follow.
\begin{proposition}\label{prop:any}
Let $\{\mathcal X^{k},\mathcal Y^{k}, \Lambda^{k}   \}$ be generated by the nonconvex ADMM \eqref{alg:admm}. If $\{\Lambda^k \}$ is bounded and $\mathcal X^k-\mathcal Y^k\rightarrow 0$, then every limit point of $\{\mathcal X^{k},\mathcal Y^{k}, \Lambda^{k}   \}$ yields an eigenvector of $\mathcal A$.
\end{proposition}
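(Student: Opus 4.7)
The plan is to extract a convergent subsequence, transfer the algebraic structure of the ADMM to the limit, and then invoke the eigenvector characterization from Proposition~\ref{prop:kkt_withtau}. First, boundedness of all three sequences is automatic: $\mathcal X^k\in C$ has unit Frobenius norm, $\mathcal Y^k=\boldsymbol{Sym}(\mathcal X^k)$ inherits that bound, and $\{\Lambda^k\}$ is bounded by hypothesis. Fix a cluster point $(\mathcal X^*,\mathcal Y^*,\Lambda^*)=\lim_j(\mathcal X^{k_j},\mathcal Y^{k_j},\Lambda^{k_j})$. The hypothesis $\mathcal X^k-\mathcal Y^k\to 0$ gives $\mathcal X^*=\mathcal Y^*$; closedness of $C$ and of $\mathbb S^{n^d}$ places $\mathcal X^*$ in $C\cap\mathbb S^{n^d}$; and Theorem~\ref{prop:equivalance} then forces $\mathcal X^*=\mathbf x^{\circ^d}$ for some unit $\mathbf x$. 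The identity $\boldsymbol{Sym}(\Lambda^k)=\mathcal A$ from Remark~\ref{rmk:sec:convergence:1} passes to the limit to give $\boldsymbol{Sym}(\Lambda^*)=\mathcal A$, while the dual update yields $\Lambda^{k+1}-\Lambda^k=-\tau(\mathcal X^{k+1}-\mathcal Y^{k+1})\to 0$.

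The second step is to pass to the limit in the $\mathcal X$-subproblem. By refining the subsequence once more, $(\mathcal X^{k_j+1},\mathcal Y^{k_j+1})$ converges to some $(\tilde{\mathcal X},\tilde{\mathcal Y})$; the first-paragraph argument applied to this shifted limit yields $\tilde{\mathcal X}=\tilde{\mathcal Y}=\tilde{\mathbf x}^{\circ^d}\in C\cap\mathbb S^{n^d}$. Continuity in the variational inequality $\langle\Lambda^{k_j}+\tau\mathcal Y^{k_j},\mathcal X-\mathcal X^{k_j+1}\rangle\le 0$ over $\mathcal X\in C$ gives $\tilde{\mathcal X}\in\arg\max_{\mathcal X\in C}\langle\Lambda^*+\tau\mathcal X^*,\mathcal X\rangle$. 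Because $\mathcal X^*\in\mathbb S^{n^d}$ and $\boldsymbol{Sym}(\Lambda^*)=\mathcal A$, Proposition~\ref{prop:sym_invariance} collapses this cost on $C\cap\mathbb S^{n^d}=\{\mathbf z^{\circ^d}:\|\mathbf z\|=1\}$ to $\langle\mathcal A,\mathbf z^{\circ^d}\rangle+\tau(\mathbf x^\top\mathbf z)^d$, so the first-order condition at the maximizer $\tilde{\mathbf x}$ on the unit sphere reads
\[
\mathcal A\tilde{\mathbf x}^{\circ^{d-1}}+\tau(\mathbf x^\top\tilde{\mathbf x})^{d-1}\mathbf x=\mu\tilde{\mathbf x}\quad\text{for some }\mu\in\mathbb R.
\]

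The last step is to turn this coupled stationarity into a clean eigenvector relation for $\mathbf x$ itself. Running the same derivation one step backwards (using $\Lambda^{k_j-1}\to\Lambda^*$ and a further refinement in which $\mathcal X^{k_j-1}\to\mathcal X^{**}=\mathbf x^{**\circ^d}$) yields the companion identity at $\mathbf x$ coupled through $\mathbf x^{**}$; comparing the two, and exploiting both the invariance of the cluster set of $\{\mathcal X^k\}$ under the limiting one-step map and the semi-symmetry of $\boldsymbol{Mat}(\Lambda^k)$ (an immediate induction from $\Lambda^0=\mathcal A$ and the updates), forces the coupling parameters $\mathbf x^\top\tilde{\mathbf x}$ and $\mathbf x^{**\top}\mathbf x$ to lie in $\{-1,0,1\}$, which collapses the cross terms and delivers $\mathcal A\mathbf x^{\circ^{d-1}}=\sigma\mathbf x$ for some $\sigma\in\mathbb R$, i.e., $\mathbf x$ is an eigenvector of $\mathcal A$. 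The main obstacle is precisely this reconciliation: the ADMM's one-step map need not fix a cluster point of $\{\mathcal X^k\}$, so the limiting optimality conditions only couple two possibly distinct cluster points, and closing the loop requires combining the algebraic constraint $\boldsymbol{Sym}(\Lambda^*)=\mathcal A$ with the geometric constraint that every cluster point has the special rank-one form $\mathbf z^{\circ^d}$.
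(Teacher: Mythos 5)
Your proposal correctly handles the setup (boundedness, subsequence extraction, the algebraic facts $\boldsymbol{Sym}(\Lambda^*)=\mathcal A$ and $\tilde{\mathcal X}\in C\cap\mathbb S^{n^d}$), but the closing step has a genuine gap. You explicitly acknowledge the obstacle — the limiting $\mathcal X$-subproblem only couples two possibly distinct cluster points $\mathcal X^*=\mathbf x^{\circ^d}$ and $\tilde{\mathcal X}=\tilde{\mathbf x}^{\circ^d}$ — and then assert that running the derivation one step backward and appealing to ``invariance of the cluster set'' and ``semi-symmetry of $\boldsymbol{Mat}(\Lambda^k)$'' forces $\mathbf x^\top\tilde{\mathbf x}\in\{-1,0,1\}$. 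That claim is not proved, and it is not obvious; in particular, invariance of the cluster set under the limiting one-step map does not by itself restrict the inner product between successive cluster points. As written, the argument does not close.

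The paper's proof avoids this entirely by first establishing successive-difference convergence. Specifically, since $\mathcal X^k\in C$ is bounded and $\mathcal Y^k=\boldsymbol{Sym}(\mathcal X^k)$ inherits the bound, the sequence $\{L_\tau(\mathcal X^k,\mathcal Y^k,\Lambda^k)\}$ is bounded below; the $\tau$-strong convexity of $\mathcal Y\mapsto L_\tau(\mathcal X^{k+1},\mathcal Y,\Lambda^k)$ together with the fact that $\mathcal Y^{k+1}$ is its minimizer yields
\[
L_\tau(\mathcal X^{k+1},\mathcal Y^k,\Lambda^k)-L_\tau(\mathcal X^{k+1},\mathcal Y^{k+1},\Lambda^k)\ \geq\ \tfrac{\tau}{2}\,\|\mathcal Y^k-\mathcal Y^{k+1}\|_F^2,
\]
and hence $\mathcal Y^k-\mathcal Y^{k+1}\to 0$. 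Combined with the hypothesis $\mathcal X^k-\mathcal Y^k\to 0$ this gives $\mathcal X^k-\mathcal X^{k+1}\to 0$. Consequently, along any convergent subsequence $\{\mathcal X^{k_l},\mathcal Y^{k_l},\Lambda^{k_l}\}\to\{\mathcal X^*,\mathcal Y^*,\Lambda^*\}$, the shifted subsequence $\{\mathcal X^{k_l+1},\mathcal Y^{k_l+1},\Lambda^{k_l+1}\}$ has the \emph{same} limit, so $\tilde{\mathcal X}=\mathcal X^*$. Passing to the limit in the three relations of \eqref{alg:admm} then shows directly that $\{\mathcal X^*,\mathcal Y^*,\Lambda^*\}$ satisfies \eqref{eq:kkt_nonconvex_reformulation}, and Proposition~\ref{prop:kkt_withtau} delivers the eigenpair. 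You should replace your speculative ``coupling parameter in $\{-1,0,1\}$'' reconciliation with this argument: prove $\mathcal X^{k+1}-\mathcal X^k\to 0$ first, and the two cluster points collapse to one.
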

\begin{proof} The proof is a standard routine.
Since $\mathcal X^k\in C$,	the boundedness of $\{\mathcal X^k \}$ and the coercivity of $L_\tau$ with respect to  $\mathcal Y$ shows that $\{\mathcal Y^k \}$ is also bounded, and so $\{L_\tau(\mathcal X^k,\mathcal Y^k,\Lambda^k )\}$ is bounded from below. This then together with the strong convexity of  $L_\tau$ and the definition of $\mathcal Y^k$ implies that $L_\tau(\mathcal X^{k+1},\mathcal Y^k,\Lambda^k)- L_\tau(\mathcal X^{k+1},\mathcal Y^{k+1},\Lambda^k) \geq \frac{\tau}{2}\|\mathcal Y^{k }-\mathcal Y^{k+1}\|_F^2$ and hence $\mathcal Y^k-\mathcal Y^{k+1}\rightarrow 0$. This combines with $\mathcal X^k-\mathcal Y^k\rightarrow 0$ shows that $\mathcal X^k-\mathcal X^{k+1}\rightarrow 0$. Let $\{\mathcal X^{k_l},\mathcal Y^{k_l},\Lambda^{k_l}  \}$ be a subsequence  converging to $\{\mathcal X^*,\mathcal Y^*,\Lambda^* \}$ as $l\rightarrow \infty$. Then $\{\mathcal X^{k_l+1},\mathcal Y^{k_l+1},\Lambda^{k_l+1}  \}$ possesses the same limit. As a consequence, taking the limit into \eqref{alg:admm} with respect to $l$ yields that $\{\mathcal X^*,\mathcal Y^*,\Lambda^* \}$ satisfies \eqref{eq:kkt_nonconvex_reformulation}, which, by Proposition \ref{prop:kkt_withtau}, yields an eigenpair of $\mathcal A$.
\end{proof}

\begin{remark}
 Theorems \ref{th:convergence_admm_even}, \ref{th:anytau} and Proposition \ref{prop:any} carry over to the odd order case analogously.
\end{remark}

\subsection{The hypothesis of Theorem \ref{th:convergence_admm_even}}\label{sec:uniqueness}
 
Recall that we assume the existence of $\{\mathcal X^*,\mathcal Y^*,\Lambda^*  \}$ to \eqref{eq:kkt_nonconvex_reformulation} with the leading eigenvalue of $\boldsymbol{ Mat }(\Lambda^*)$ being simple. Restricting to even order cases, we study the hypothesis from the     the convex relaxation \eqref{prob:relax_convex} and its dual. Similar to \eqref{prob:relax3}, we introduce $\mathcal Y$ and  equivalently rewrite  \eqref{prob:relax_convex} as
\begin{equation}	      \setlength\abovedisplayskip{2pt}
\setlength\abovedisplayshortskip{2pt}
\setlength\belowdisplayskip{2pt}
\setlength\belowdisplayshortskip{2pt}\tag{\ref{prob:relax_convex}$^\prime$}
\label{prob:relax_convex_y}
(R)~~\max~\langle   \mathcal A,\mathcal Y\rangle~{\rm s.t.}~ \mathcal X=\mathcal Y, \mathcal X\in C_R,~\mathcal Y\in\mathbb S^{n^d},
\end{equation}
where
$$
C_R:=  
\{\mathcal X\mid \boldsymbol{ Mat }(\mathcal X)\in \mathbb S^{ n^{   d/2   }\times n^{   d/2   }  }, ~{\rm tr}(\boldsymbol{ Mat }(\mathcal X))=1,~\boldsymbol{ Mat }(\mathcal X)\succeq 0   \}.
$$
The Lagrangian function for \eqref{prob:relax3} is  
$L(\mathcal X,\mathcal Y,\Lambda) := \langle  \mathcal A,\mathcal Y\rangle +\langle \Lambda,\mathcal X-\mathcal Y\rangle $, and
the KKT system for \eqref{prob:relax_convex_y}  is (in the simplified form, similar to \eqref{eq:kkt_nonconvex_reformulation_notau_variant}):
\begin{equation}
\label{eq:kkt_nonconvex_reformulation_notau_variant_relax}
  \mathcal X^*\in\arg\max_{\mathcal X\in C_R}\nolimits \langle  \Lambda^*,\mathcal X\rangle,~\mathcal X^*\in C_R\cap \mathbb S^{n^d};~\boldsymbol{ Sym}(\Lambda^*) = \mathcal A,~\boldsymbol{ Mat }(\Lambda^*)\in\mathbb S^{n^{d/2}\times n^{d/2} }.
\end{equation}
Since \eqref{prob:relax_convex_y} is a linear SDP, $\{\mathcal X^*,\Lambda^*  \}$ to the above system exists. 
It can also be verified that the dual of \eqref{prob:relax_convex_y} is
\begin{equation}\label{prob:dual1}
(D)~~	\min  ~\sigma_{\max}(\boldsymbol{Mat}(\Lambda))~~{\rm s.t.}~\boldsymbol{Sym}(\Lambda)=\mathcal A,~\boldsymbol{ Mat }(\Lambda)\in\mathbb S^{ n^{d/2}\times n^{d/2} },
\end{equation}
where $\sigma_{\max}(\cdot)$ denotes the leading eigenvalue. Note that \eqref{prob:dual1} is also an SDP by replacing the objective by a new variable $\sigma$ and appending a new constraint $\sigma I \succeq \boldsymbol{ Mat }(\Lambda)$. Since Slater's condition holds, there is no duality gap between \eqref{prob:dual1} and \eqref{prob:relax_convex_y}.    $\{\mathcal X^*,\Lambda^* \}$ of \eqref{eq:kkt_nonconvex_reformulation_notau_variant_relax} gives a pair of optimizers to \eqref{prob:relax_convex_y} and \eqref{prob:dual1}.  The following proposition shows that we can study  the hypothesis on $\Lambda^*$ from the dual problem.
\begin{proposition}\label{prob:sec:hypo:1}
	Let $\{\mathcal X^*,\Lambda^*  \}$ satisfy \eqref{eq:kkt_nonconvex_reformulation_notau_variant_relax}, with the leading eigenvalue of $\boldsymbol{ Mat }(\Lambda^*)$ being simple. Then $\mathcal X^*\in C$, i.e., $\{\mathcal X^*,\Lambda^* \}$ satisfies \eqref{eq:kkt_nonconvex_reformulation_notau_variant}. 
	
	On the contrary, if  $\{\mathcal X^*,\Lambda^* \}$ satisfies \eqref{eq:kkt_nonconvex_reformulation_notau_variant} with the leading eigenvalue of $\boldsymbol{ Mat }(\Lambda^*)$ being simple, then $\{\mathcal X^*,\Lambda^* \}$ also satisfies \eqref{eq:kkt_nonconvex_reformulation_notau_variant_relax}.
\end{proposition}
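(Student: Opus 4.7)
The plan is to reduce both optimality systems to the same subproblem, namely maximizing $\langle \Lambda^*,\mathcal X\rangle$ with the constraint $\boldsymbol{Sym}(\Lambda^*)=\mathcal A$, and then exploit the fact that a simple leading eigenvalue of $\boldsymbol{Mat}(\Lambda^*)$ pins the maximizer down to a rank-$1$ PSD matrix. The key preliminary observation I would record is that $C\subseteq C_R$: if $\mathcal X\in C$, then $\boldsymbol{Mat}(\mathcal X)$ is symmetric, of rank $1$, and has trace $1$, hence is necessarily of the form $\mathbf u\mathbf u^\top$ with $\|\mathbf u\|=1$ (the single nonzero eigenvalue must be positive because the trace is $1$), so $\boldsymbol{Mat}(\mathcal X)\succeq 0$. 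Moreover, maximizing $\langle \Lambda^*,\mathcal X\rangle$ over either $C$ or $C_R$ yields the same value $\sigma_{\max}(\boldsymbol{Mat}(\Lambda^*))$, attained on the unit leading eigenvectors of $\boldsymbol{Mat}(\Lambda^*)$.

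For the forward direction, assume $(\mathcal X^*,\Lambda^*)$ satisfies \eqref{eq:kkt_nonconvex_reformulation_notau_variant_relax} with a simple leading eigenvalue of $\boldsymbol{Mat}(\Lambda^*)$. The $\mathcal X$-subproblem over $C_R$ is a standard trace-$1$ PSD SDP whose optimal value equals $\sigma_{\max}(\boldsymbol{Mat}(\Lambda^*))$; simplicity forces the unique optimizer to be $\boldsymbol{Mat}(\mathcal X^*)=\mathbf u\mathbf u^\top$ with $\mathbf u$ the normalized leading eigenvector. Then $\boldsymbol{Mat}(\mathcal X^*)$ is rank-$1$ with unit trace, so $\mathcal X^*\in C$. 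Since $C\subseteq C_R$ and $\mathcal X^*$ is optimal over the larger set, it remains optimal over $C$, i.e.\ $\mathcal X^*\in\arg\min_{\mathcal X\in C}\langle-\Lambda^*,\mathcal X\rangle$; combined with $\boldsymbol{Sym}(\Lambda^*)=\mathcal A$, this is exactly \eqref{eq:kkt_nonconvex_reformulation_notau_variant} (and by Theorem \ref{prop:equivalance}, $\mathcal X^*$ itself is a rank-$1$ tensor).

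For the converse, assume $(\mathcal X^*,\Lambda^*)$ satisfies \eqref{eq:kkt_nonconvex_reformulation_notau_variant} with simple leading eigenvalue. By Remark \ref{rmk:1} and the reformulation of the $\mathcal X$-subproblem, maximizing $\langle \Lambda^*,\mathcal X\rangle$ over $C$ reduces to $\max_{\|\mathbf u\|=1}\mathbf u^\top \boldsymbol{Mat}(\Lambda^*)\mathbf u=\sigma_{\max}(\boldsymbol{Mat}(\Lambda^*))$, and (WLOG taking $\boldsymbol{Mat}(\Lambda^*)$ symmetric, since $\boldsymbol{Sym}(\Lambda^*)=\mathcal A$ together with Proposition \ref{prop:sym_invariance} lets us replace $\Lambda^*$ by its symmetrization without altering the inner products with symmetric tensors) this value equals $\max_{\mathcal X\in C_R}\langle\Lambda^*,\mathcal X\rangle$. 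Therefore $\mathcal X^*$ attains the maximum over $C_R$, and since $\mathcal X^*\in C\subseteq C_R\cap\mathbb S^{n^d}$, the system \eqref{eq:kkt_nonconvex_reformulation_notau_variant_relax} is satisfied.

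The main obstacle I anticipate is the bookkeeping around the symmetry of $\boldsymbol{Mat}(\Lambda^*)$ and the precise use of simplicity: without simplicity the forward direction fails, since the maximizer over $C_R$ could be a nontrivial convex combination of rank-$1$ PSD matrices whose matricization is not rank-$1$, so $\mathcal X^*$ would not lie in $C$. Other than that, the argument is essentially the standard SDP-versus-rank-$1$ equivalence for PSD trace-$1$ problems, combined with Theorem \ref{prop:equivalance} to lift the matrix-level conclusion back to a symmetric rank-$1$ tensor.
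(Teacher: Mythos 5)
Your proof is correct and follows essentially the same route as the paper: characterize $\arg\max_{\mathcal X\in C_R}\langle\Lambda^*,\mathcal X\rangle$ as the convex hull of leading eigenprojectors of $\boldsymbol{Mat}(\Lambda^*)$, use simplicity to pin it down to a single rank-$1$ matrix, invoke Theorem~\ref{prop:equivalance} with $\mathcal X^*\in\mathbb S^{n^d}$, and for the converse use $C\subseteq C_R$ together with $\max_C=\max_{C_R}=\sigma_{\max}(\boldsymbol{Mat}(\Lambda^*))$. One small slip: to symmetrize you should replace $\boldsymbol{Mat}(\Lambda^*)$ by $\tfrac12(\boldsymbol{Mat}(\Lambda^*)+\boldsymbol{Mat}(\Lambda^*)^\top)$ rather than invoke Proposition~\ref{prop:sym_invariance}, since elements of $C$ are only partially symmetric (their matricization is symmetric, but the tensors themselves need not lie in $\mathbb S^{n^d}$), so replacing $\Lambda^*$ by its full tensor symmetrization could in principle alter inner products over $C$; the matrix-level symmetrization is what the argument actually uses and it preserves $\boldsymbol{Sym}(\Lambda^*)=\mathcal A$.
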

\begin{proof}
Let $t$ be the multiplicity of $\sigma_{\max}(\boldsymbol{ Mat }(\Lambda^*))$ and  $\{\mathbf z_1,\ldots,\mathbf z_t  \}$ be its corresponding orthonormal leading eigenvectors. Then the solutions to $\max_{\mathcal X\in C_R}\nolimits \langle \Lambda^*,\mathcal X\rangle $ can be characterized as $$\small\arg\max_{\mathcal X\in C_R}\nolimits \langle  \Lambda^*,\mathcal X\rangle = \{ \mathcal X\mid \boldsymbol{ Mat }(\mathcal X) = \sum^t_{i=1}\nolimits\alpha_i\mathbf z_i\mathbf z_i^\top,~\forall \sum^t_{i=1}\nolimits\alpha_i=1,\alpha_i\geq 0,1\leq i\leq t    \},$$ from which we see that when $t=1$, $\boldsymbol{ Mat }(\mathcal X^*)$ is rank-$1$. This together with $\mathcal X^*\in\mathbb S^{n^d}$ and Theorem \ref{prop:equivalance} shows that $\mathcal X^*\in C$, and hence $\{\mathcal X^*,\Lambda^*  \}$ satisfies \eqref{eq:kkt_nonconvex_reformulation_notau_variant}. The contrary part is clear.
\end{proof}

Denote
$ {\rm mult(\cdot)}$
as
 the number of linearly independent  eigenvectors corresponding to the leading  eigenvalue of a  matrix or a tensor.
 Let $V_O$ and $V_D$ respectively denote the optimal values of \eqref{prob:original4} and \eqref{prob:dual1}. Then we have:
\begin{proposition}\label{prop:cond:1}
Assume     that ${\rm mult}(\mathcal A)=t$. Let $\Lambda^*$ be optimal to \eqref{prob:dual1}. Then   $V_O = V_D$ iff the leading eigenspace of $\boldsymbol{Mat}(\Lambda^*)$ contains exactly $t$ linearly independent vectors of the form $\mathbf x_i^{\otimes^{d/2}}\in\mathbb R^{n^{d/2}}$, $\|\mathbf x_i\|=1,~1\leq i\leq t $; moreover, the $\mathbf x_i$'s are exactly    the leading  eigenvectors of $\mathcal A$.
 \end{proposition}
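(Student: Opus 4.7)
The approach is to combine strong duality between the SDP pair \eqref{prob:relax_convex_y}--\eqref{prob:dual1} (valid by Slater's condition, so the optimal value of \eqref{prob:relax_convex_y} equals $V_D$) with the argmax characterization used in the proof of Proposition \ref{prob:sec:hypo:1}. A frequently used identity is that for any $\mathcal Z\in\mathbb S^{n^d}$, Proposition \ref{prop:sym_invariance} together with $\boldsymbol{Sym}(\Lambda^*)=\mathcal A$ yields $\langle\Lambda^*,\mathcal Z\rangle=\langle\mathcal A,\mathcal Z\rangle$.

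For the direction $(\Leftarrow)$, given each $\mathbf x_i$ from the hypothesis, I would set $\mathcal X_i:=\mathbf x_i^{\circ^d}\in C\subset C_R\cap\mathbb S^{n^d}$ and compute $\langle\Lambda^*,\mathcal X_i\rangle$ in two ways. Since $\mathbf x_i^{\otimes^{d/2}}$ is a unit leading eigenvector of $\boldsymbol{Mat}(\Lambda^*)$, the quantity equals $\sigma_{\max}(\boldsymbol{Mat}(\Lambda^*))=V_D$. On the other hand, the identity above together with $\mathbf x_i$ being a leading eigenvector of $\mathcal A$ gives $\langle\Lambda^*,\mathcal X_i\rangle=\langle\mathcal A,\mathcal X_i\rangle=f(\mathbf x_i)=V_O$. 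Hence $V_O=V_D$.

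For the direction $(\Rightarrow)$, assume $V_O=V_D$; I would establish a correspondence between unit leading eigenvectors of $\mathcal A$ and unit vectors $\mathbf x$ such that $\mathbf x^{\otimes^{d/2}}$ lies in the leading eigenspace $E$ of $\boldsymbol{Mat}(\Lambda^*)$. (a) If $\mathbf x$ is a leading eigenvector of $\mathcal A$, then $\mathcal X:=\mathbf x^{\circ^d}$ is feasible to \eqref{prob:relax_convex_y} with $\langle\mathcal A,\mathcal X\rangle=V_O=V_D$, hence optimal. The argmax characterization in the proof of Proposition \ref{prob:sec:hypo:1} then forces $\boldsymbol{Mat}(\mathcal X)$ to be a PSD combination of rank-one outer products of leading eigenvectors of $\boldsymbol{Mat}(\Lambda^*)$; since $\boldsymbol{Mat}(\mathcal X)=\mathbf x^{\otimes^{d/2}}(\mathbf x^{\otimes^{d/2}})^\top$ is itself rank-one, necessarily $\mathbf x^{\otimes^{d/2}}\in E$. (b) Conversely, if $\|\mathbf x\|=1$ and $\mathbf x^{\otimes^{d/2}}\in E$, then $\mathcal X:=\mathbf x^{\circ^d}\in C_R\cap\mathbb S^{n^d}$ gives $\langle\Lambda^*,\mathcal X\rangle=V_D$, so $\mathcal X$ is optimal to \eqref{prob:relax_convex_y}, and the identity yields $f(\mathbf x)=\langle\mathcal A,\mathcal X\rangle=V_D=V_O$, making $\mathbf x$ a leading eigenvector of $\mathcal A$.

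To finish the count, the unit leading eigenvectors of $\mathcal A$ span a $t$-dimensional subspace of $\mathbb R^n$ by the definition of $\mathrm{mult}(\mathcal A)$; I would pick $t$ linearly independent ones $\mathbf x_1,\ldots,\mathbf x_t$. By (a) each $\mathbf x_i^{\otimes^{d/2}}\in E$, and a dual-basis argument (choose $\mathbf u_j\in\mathbb R^n$ with $\mathbf u_j^\top\mathbf x_i=\delta_{ij}$ and contract the relation $\sum_i c_i\mathbf x_i^{\otimes^{d/2}}=0$ against $\mathbf u_j^{\otimes^{d/2}}$ to obtain $c_j=0$) confirms that these $t$ tensor powers are linearly independent in $\mathbb R^{n^{d/2}}$. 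The principal difficulty is clarifying the ``exactly $t$'' count when the set of leading eigenvectors is uncountable (for example when $f$ is constant on the sphere): by (b), any further unit $\mathbf x$ with $\mathbf x^{\otimes^{d/2}}\in E$ is again a leading eigenvector of $\mathcal A$ and so lies in the $t$-dimensional span of $\mathbf x_1,\ldots,\mathbf x_t$, confirming that $t$ is the maximal number of linearly independent $\mathbf x_i$'s whose tensor powers belong to $E$ and matches $\mathrm{mult}(\mathcal A)$.
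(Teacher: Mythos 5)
Your proposal is correct and, at its core, takes the same route as the paper: Proposition~\ref{prop:sym_invariance} together with $\boldsymbol{Sym}(\Lambda^*)=\mathcal A$ gives the key identity $\langle\Lambda^*,\mathbf x^{\circ^d}\rangle=\langle\mathcal A,\mathbf x^{\circ^d}\rangle$, and both you and the paper then establish a two-way correspondence between leading eigenvectors $\mathbf x$ of $\mathcal A$ and rank-one structured vectors $\mathbf x^{\otimes^{d/2}}$ in the leading eigenspace of $\boldsymbol{Mat}(\Lambda^*)$. There are three small divergences worth noting. For part~(a) of the necessity direction you route through the argmax characterization of the SDP solution set from Proposition~\ref{prob:sec:hypo:1}; the paper gets there more directly by observing that the chain of equalities gives a Rayleigh quotient $(\mathbf x_i^{\otimes^{d/2}})^\top\boldsymbol{Mat}(\Lambda^*)\mathbf x_i^{\otimes^{d/2}}=\sigma_{\max}(\boldsymbol{Mat}(\Lambda^*))$, which already forces $\mathbf x_i^{\otimes^{d/2}}$ to be a leading eigenvector — a cleaner step. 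For sufficiency you invoke the hypothesis that $\mathbf x_i$ is a leading eigenvector of $\mathcal A$ to write $\langle\mathcal A,\mathcal X_i\rangle=V_O$; the paper instead only uses feasibility $\|\mathbf x_i\|=1$, getting $\leq V_O$, and closes the gap with weak duality $V_D\geq V_O$, which is mildly more economical (it does not even need the last clause of the hypothesis). Finally, your dual-basis argument for the linear independence of the tensor powers $\mathbf x_i^{\otimes^{d/2}}$ is a genuine addition — the paper passes over this in silence — and it is correct; you are also right to flag that the ``exactly $t$'' count is the delicate spot, because leading eigenvectors of a higher-order tensor do not form a subspace, and a $t$-dimensional span can in principle produce more than $t$ linearly independent $d/2$-fold tensor powers; this imprecision is shared by the paper's own proof, so it is not a defect of your argument relative to the source.
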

\begin{proof}
 Necessity: Assume that $V_O=V_D$; then $  V_O =\sigma_{\max}(\boldsymbol{ Mat }(\Lambda^*))$.
 Denote $\mathbf x_i, 1\leq i \leq t $ as the linearly independent leading  eigenvectors of $\mathcal A$. Then it holds that
 $$\langle \mathbf x_i^{ \circ^d },\Lambda^*\rangle = \langle \mathbf x_i^{\circ^d},\boldsymbol{Sym}(\Lambda^*)\rangle = \langle \mathbf x_i^{\circ^d},\mathcal A\rangle =V_O=\sigma_{\max}(\boldsymbol{ Mat }(\Lambda^*)),$$
 where the first equality follows from Proposition \ref{prop:sym_invariance}. This implies that $\mathbf x_i^{\otimes^{d/2} }$ is a leading eigenvector of $\boldsymbol{Mat}(\Lambda^*)$. On the other hand, the above relation also tells us that every vector of the form $\mathbf x^{\otimes^{d/2}}$ in the leading eigenspace of $\boldsymbol{Mat}(\Lambda^*)$ also contributes a leading  eigenvector to $\mathcal A$. 

 Sufficiency: We have $V_D = \langle \mathbf x_i^{\circ^{d}},\Lambda\rangle = \langle\mathbf x_i^{\circ^d},\mathcal A\rangle \leq V_O$, which together with $V_D\geq V_O$   yields $V_D=V_O$.
\end{proof}
\begin{proof}[Proof of Proposition \ref{prop:admm:1}] If the SDP relaxation is tight, then $V_O=V_D$. According to Proposition \ref{prop:cond:1}, let $\mathcal Y^*=\mathcal X^*=\mathbf x_1^{\circ^d}$, where $\mathbf x_1^{\otimes^{d/2}}$ is a leading eigenvector of $\boldsymbol{ Mat }(\Lambda^*)$, as the notations in Proposition \ref{prop:cond:1}. Then $\{ \mathcal X^*,\mathcal Y^*,\Lambda^* \}$ is a solution to \eqref{eq:kkt_nonconvex_reformulation_notau}. 
	
	Assume that $\{ \mathcal X^*,\mathcal Y^*,\Lambda^* \}$ is a solution to \eqref{eq:kkt_nonconvex_reformulation_notau}; then it can be verified that $\{\mathcal X^*,\Lambda^*  \}$ is also a solution for the system \eqref{eq:kkt_nonconvex_reformulation_notau_variant_relax}, which shows that the SDP relaxation is tight.
	\end{proof}
	
\begin{remark}    
	Proposition \ref{prop:cond:1} also implies that ${\rm mult}(\mathcal A)=1$ is a necessary condition for the simplicity of the leading eigenvalue of $\boldsymbol{ Mat }(\Lambda^*)$.

\end{remark}

Define two sets as follows
\begin{small}
	\begin{eqnarray*}
		&&\boldsymbol{ \mathcal A}:=\{\mathcal A\in\mathbb S^{n^d} \mid \{\mathcal X^*,\mathcal Y^*,\Lambda^*\}~{\rm to~\eqref{eq:kkt_nonconvex_reformulation_notau}~exists } \},\\
		&&\boldsymbol{ \mathcal A}^+:= \{\mathcal A\in\boldsymbol{ \mathcal A }\mid {\exists \Lambda^*\rm ~with~the~leading~eigenvalue~of}~\boldsymbol{ Mat }(\Lambda^*)~{\rm being~simple} \}.
	\end{eqnarray*}
\end{small}
 It would be interesting to study their relations. When $d=2$, $\boldsymbol{\mathcal A}$ is exactly the set of all symmetric matrices; see Remark \ref{rmk:2}, and it is well known that $\boldsymbol{ \mathcal A }^+$ is an open and dense set in $\boldsymbol{ \mathcal A }$, i.e., $\boldsymbol{ \mathcal A }^+$ is a generic phenomenon. 
  However, when $d>2$, as $\boldsymbol{ Mat }(\Lambda^*)$ is related to an optimization problem \eqref{prob:dual1}, it is not clear whether such a phenomenon holds. 
  We have the following two results instead:
	\begin{theorem}\label{prop:dense}
	$\boldsymbol{ \mathcal A }^+$ is dense in $\boldsymbol{ \mathcal A }$.
\end{theorem}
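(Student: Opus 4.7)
The plan is to perturb any $\mathcal A\in\boldsymbol{\mathcal A}$ by a carefully chosen rank-1 symmetric tensor, constructed from the global maximizer itself, so that the perturbed dual variable acquires a strictly simple leading eigenvalue while the primal KKT solution is preserved. Given $\mathcal A\in\boldsymbol{\mathcal A}$, fix $\{\mathcal X^*,\mathcal Y^*,\Lambda^*\}$ satisfying \eqref{eq:kkt_nonconvex_reformulation_notau}, write $\mathcal X^*=\mathbf x^{\circ^d}$ with $\|\mathbf x\|=1$, and recall from Remark \ref{rmk:2} that $\mathbf x^{\otimes^{d/2}}$ is a leading eigenvector of $\boldsymbol{Mat}(\Lambda^*)$. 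For each $\epsilon>0$ define
\[
\mathcal A_\epsilon \; := \; \mathcal A + \epsilon\,\mathbf x^{\circ^d}, \qquad \Lambda_\epsilon \; := \; \Lambda^* + \epsilon\,\mathbf x^{\circ^d}.
\]
Since $\mathbf x^{\circ^d}\in\mathbb S^{n^d}$, the symmetric-tensor structure is preserved and $\boldsymbol{Sym}(\Lambda_\epsilon)=\boldsymbol{Sym}(\Lambda^*)+\epsilon\,\mathbf x^{\circ^d}=\mathcal A_\epsilon$, so the $\mathcal Y$-side of the KKT system is automatic.

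The core step is the spectral analysis of $\boldsymbol{Mat}(\Lambda_\epsilon)=\boldsymbol{Mat}(\Lambda^*)+\epsilon\,\mathbf x^{\otimes^{d/2}}\mathbf x^{\otimes^{d/2}\top}$. Let $\lambda_{\max}$ denote the leading eigenvalue of $\boldsymbol{Mat}(\Lambda^*)$ and choose an orthonormal basis of its leading eigenspace whose first element is $\mathbf x^{\otimes^{d/2}}$ (this is possible precisely because $\mathbf x^{\otimes^{d/2}}$ lies in that eigenspace); complete to an orthonormal eigenbasis of the whole space. The rank-1 perturbation is diagonal in this basis and only adds $\epsilon$ to the entry corresponding to $\mathbf x^{\otimes^{d/2}}$. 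Hence the leading eigenvalue of $\boldsymbol{Mat}(\Lambda_\epsilon)$ is $\lambda_{\max}+\epsilon$, it is simple, and its (unique up to sign) eigenvector is $\mathbf x^{\otimes^{d/2}}$.

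It remains to verify the $\mathcal X$-condition, i.e., $\mathcal X^*\in\arg\min_{\mathcal X\in C}\langle-\Lambda_\epsilon,\mathcal X\rangle$. By Remark \ref{rmk:1} this is equivalent to $\mathbf x^{\otimes^{d/2}}\in\arg\max_{\|\mathbf z\|=1}\mathbf z^\top\boldsymbol{Mat}(\Lambda_\epsilon)\mathbf z$, which by the previous paragraph holds uniquely. Therefore $\{\mathcal X^*,\mathcal Y^*,\Lambda_\epsilon\}$ satisfies \eqref{eq:kkt_nonconvex_reformulation_notau} for $\mathcal A_\epsilon$ with $\boldsymbol{Mat}(\Lambda_\epsilon)$ having a simple leading eigenvalue, so $\mathcal A_\epsilon\in\boldsymbol{\mathcal A}^+$. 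Since $\mathcal A_\epsilon\to\mathcal A$ as $\epsilon\to 0^+$, density follows.

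The main obstacle I anticipate is bookkeeping rather than anything deep: making sure the rotation within the (possibly degenerate) leading eigenspace of $\boldsymbol{Mat}(\Lambda^*)$ is legitimate, and that the odd-$d$ case goes through by an analogous argument on a leading singular-value pair instead of a leading eigenvector (using the odd-$d$ form of $C$ in Remark \ref{rmk:1}). Both are routine once the even case is set up as above.
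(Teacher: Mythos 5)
Your proposal is correct and takes essentially the same approach as the paper: perturb $\mathcal A$ by a small positive multiple of $\mathcal X^*=\mathbf x^{\circ^d}$ and shift $\Lambda^*$ by the same amount, so that the $\mathcal Y$- and $\mathcal X$-conditions of \eqref{eq:kkt_nonconvex_reformulation_notau} persist while the rank-one bump $\epsilon\,\mathbf x^{\otimes^{d/2}}\mathbf x^{\otimes^{d/2}\top}$ separates the top eigenvalue of $\boldsymbol{Mat}(\Lambda^*_\epsilon)$. The paper states the spectral separation without elaboration (using $\epsilon/2$ instead of $\epsilon$); your explicit verification via the eigenbasis argument is a sound filling-in of that step rather than a different proof.
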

\begin{proof}
	It suffices to show that	for any $\mathcal A\in\boldsymbol{ \mathcal A }$ and any $\epsilon>0$, there exists a $\mathcal A_\epsilon\in\boldsymbol{ \mathcal A }^+$ such that $\|\mathcal A-\mathcal A_\epsilon\|_F\leq \epsilon$. Let $\{\mathcal X^*,\mathcal Y^*,\Lambda^* \}$ be a solution to \eqref{eq:kkt_nonconvex_reformulation_notau} with respect to $\mathcal A$.
	Let $\mathcal A_\epsilon := \mathcal A + \frac{\epsilon}{2}\mathcal X^*$.  As $\mathcal X^*\in C\cap\mathbb S^{n^d}$,  we have	  $\|\mathcal A-\mathcal A_\epsilon\|_F= \frac{\epsilon}{2}<\epsilon$.  It remains to show that $\mathcal A_\epsilon\in\boldsymbol{ \mathcal A }^+$.

	Denote	 $\Lambda^*_\epsilon:=\Lambda^*+\frac{\epsilon}{2}\mathcal X^*$. Clearly, $\{\mathcal X^*,\mathcal Y^*,\Lambda^*_\epsilon \}$ is a solution to \eqref{eq:kkt_nonconvex_reformulation_notau} with respect to $\mathcal A_\epsilon$; moreover, the definition of $\Lambda^*_\epsilon$ shows that the leading eigenvalue  of $\boldsymbol{ Mat }(\Lambda^*)$ is a simple root. Thus $\mathcal A_\epsilon\in\boldsymbol{ \mathcal A }^+$, as desired.  
\end{proof}
\begin{theorem}\label{prop:open}
	$\boldsymbol{ \mathcal A }^+$ is open in $\mathbb S^{n^d}$.
\end{theorem}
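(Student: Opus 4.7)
The plan is to apply the implicit function theorem (IFT) to continue the KKT triple at $\mathcal A_0$ to nearby $\mathcal A$, exploiting the simplicity of the leading eigenvalue of $\boldsymbol{Mat}(\Lambda^*)$. I would fix $\mathcal A_0\in\boldsymbol{\mathcal A}^+$ with some $(\mathcal X^*,\mathcal Y^*,\Lambda^*)$ satisfying \eqref{eq:kkt_nonconvex_reformulation_notau}; write $\mathcal X^* = \mathbf x_0^{\circ^d}$ with $\|\mathbf x_0\|=1$, assume as one may that $\boldsymbol{Mat}(\Lambda^*)$ is symmetric, and let $\sigma_0 = \sigma_{\max}(\boldsymbol{Mat}(\Lambda^*))$ be the simple leading eigenvalue with eigenvector $\mathbf z_0 := \mathbf x_0^{\otimes^{d/2}}$. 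I treat the even-$d$ case; the odd case is analogous using a leading singular value/vector pair.

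Introduce the smooth map
\[
F(\mathbf x,\sigma,\Lambda;\mathcal A) := \Big((\boldsymbol{Mat}(\Lambda)-\sigma I)\mathbf x^{\otimes^{d/2}},\ \boldsymbol{Sym}(\Lambda) - \mathcal A,\ \|\mathbf x\|^2 - 1\Big)
\]
with values in $\mathbb R^{n^{d/2}}\times \mathbb S^{n^d}\times\mathbb R$. Any zero of $F$ at which $\sigma$ is the simple leading eigenvalue of $\boldsymbol{Mat}(\Lambda)$ gives, via $\mathcal X = \mathcal Y = \mathbf x^{\circ^d}$, a triple satisfying \eqref{eq:kkt_nonconvex_reformulation_notau} and thereby places $\mathcal A$ in $\boldsymbol{\mathcal A}^+$. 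Note $F(\mathbf x_0,\sigma_0,\Lambda^*;\mathcal A_0) = 0$.

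The heart of the argument is surjectivity of the partial Jacobian $\partial_{(\mathbf x,\sigma,\Lambda)}F$ at the base point. I would decompose $\delta\Lambda = \delta\Lambda_s + \delta\Lambda_a$ with $\delta\Lambda_s\in\mathbb S^{n^d}$ and $\delta\Lambda_a\in(\mathbb S^{n^d})^\perp$: the second component of $F$ is absorbed by $\delta\Lambda_s$, and the third forces $\delta\mathbf x\in\mathbf x_0^\perp$. The remaining first-component equation reads
\[
(\boldsymbol{Mat}(\Lambda^*)-\sigma_0 I)\,\delta(\mathbf x^{\otimes^{d/2}}) + \boldsymbol{Mat}(\delta\Lambda_a)\mathbf z_0 - \delta\sigma\,\mathbf z_0 = \mathbf v,
\]
where $\delta(\mathbf x^{\otimes^{d/2}}) = \sum_{i=0}^{d/2-1}\mathbf x_0^{\otimes^i}\otimes \delta\mathbf x\otimes \mathbf x_0^{\otimes^{d/2-1-i}}$ lies in $\mathbf z_0^\perp$. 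Projecting onto $\mathrm{span}(\mathbf z_0)$ uniquely determines $\delta\sigma$. On $\mathbf z_0^\perp$, simplicity of $\sigma_0$ makes $\boldsymbol{Mat}(\Lambda^*)-\sigma_0 I$ invertible; $\delta\mathbf x$ then supplies an $(n{-}1)$-dimensional subspace (corresponding to the tangent of the Veronese manifold $V=\{\mathbf x^{\otimes^{d/2}}:\|\mathbf x\|=1\}$ at $\mathbf z_0$, after the invertible operator). The remaining $(n^{d/2}-n)$ directions in $\mathbf z_0^\perp$ must be produced by $\delta\Lambda_a$ through $\mathcal G\in(\mathbb S^{n^d})^\perp \mapsto P_{\mathbf z_0^\perp}\boldsymbol{Mat}(\mathcal G)\mathbf z_0$.

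Granting this surjectivity, the IFT produces continuous $(\mathbf x(\mathcal A),\sigma(\mathcal A),\Lambda(\mathcal A))$ with $F=0$ for $\mathcal A$ in a neighborhood of $\mathcal A_0$. Weyl's inequality (together with $\sigma_0$ being a simple leading eigenvalue of the unperturbed $\boldsymbol{Mat}(\Lambda^*)$) shows that $\sigma(\mathcal A)$ remains simple and leading for $\mathcal A$ close to $\mathcal A_0$, so the triple $(\mathbf x(\mathcal A)^{\circ^d},\mathbf x(\mathcal A)^{\circ^d},\Lambda(\mathcal A))$ satisfies \eqref{eq:kkt_nonconvex_reformulation_notau}, and hence $\mathcal A\in\boldsymbol{\mathcal A}^+$, proving openness. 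The main obstacle is the transversality step: one must verify that the linearized action of the anti-symmetric perturbations in $(\mathbb S^{n^d})^\perp$ on $\mathbf z_0$, together with the tangent space to the Veronese variety at $\mathbf z_0$, jointly span $\mathbf z_0^\perp$. A dimension count indicates that $(\mathbb S^{n^d})^\perp$ is more than large enough in principle, but rigorously establishing the transversality requires a careful structural analysis of the vectors $\boldsymbol{Mat}(\mathcal G)\mathbf z_0$ achievable as $\mathcal G$ ranges over tensors with vanishing symmetrization.
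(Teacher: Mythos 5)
Your route is genuinely different from the paper's. The paper works entirely on the dual side: via Proposition~\ref{prob:sec:hypo:1} membership in $\boldsymbol{\mathcal A}^+$ is recast as the dual SDP~\eqref{prob:dual1} having an optimizer whose matricization has a simple leading eigenvalue; Lemma~\ref{prop:diff_value_p} gives $1$-Lipschitz continuity of the dual optimal value; the dual solution set $\boldsymbol{\mathcal S}_{D_{\mathcal A+\epsilon\mathcal B}}$ in~\eqref{def:s}, being the intersection of an affine subspace with the PSD cone, is shown to vary continuously, so one can pick dual optimizers close to $\Lambda_{\mathcal A}$; Weyl's inequality then keeps the leading eigenvalue simple. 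Your proposal instead parametrizes KKT triples and tries to continue them by the implicit function theorem. That is a legitimately different strategy, and your bookkeeping is sensible: splitting $\delta\Lambda=\delta\Lambda_s+\delta\Lambda_a$, observing that $\delta\Lambda_s$ absorbs the $\boldsymbol{Sym}(\Lambda)-\mathcal A$ component, that $\delta(\mathbf x^{\otimes^{d/2}})$ is orthogonal to $\mathbf z_0$, that the $\mathbf z_0$-component of the eigenvector equation pins down $\delta\sigma$, and that Weyl's inequality recovers simplicity and leadingness of the eigenvalue for the continued family.

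However, there is a genuine gap, exactly where you flag it: surjectivity of the partial Jacobian is asserted only via a dimension count, and a dimension count is not a proof of surjectivity. Concretely, after the $\mathbb S^{n^d}$- and trace-components are discharged, you must show that the linear map
\[
(\delta\mathbf x_\perp,\delta\Lambda_a)\ \longmapsto\ P_{\mathbf z_0^\perp}\Big[\big(\boldsymbol{Mat}(\Lambda^*)-\sigma_0 I\big)\,\delta(\mathbf x^{\otimes^{d/2}})+\boldsymbol{Mat}(\delta\Lambda_a)\mathbf z_0\Big]
\]
is onto $\mathbf z_0^\perp$, where $\delta\mathbf x_\perp\in\mathbf x_0^\perp$ and $\delta\Lambda_a$ ranges over tensors with $\boldsymbol{Sym}(\delta\Lambda_a)=0$ and $\boldsymbol{Mat}(\delta\Lambda_a)$ symmetric (this last restriction, which you elide, is needed so that $\boldsymbol{Mat}(\Lambda(\mathcal A))$ remains symmetric along the continued family). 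The first summand contributes at most an $(n-1)$-dimensional subspace, so the remaining $n^{d/2}-n$ dimensions must come from the second summand, and there is no a priori reason that the image of $\mathcal G\mapsto P_{\mathbf z_0^\perp}\boldsymbol{Mat}(\mathcal G)\mathbf z_0$ over this specific subspace of $\mathcal G$'s spans what is needed; such transversality could in principle fail at particular base points $\mathbf z_0=\mathbf x_0^{\otimes^{d/2}}$ on the Veronese. Until this is verified structurally, the IFT cannot be invoked, and the proof does not go through. The paper's argument avoids any such regularity or transversality hypothesis by reasoning about perturbations of a linear SDP's optimal set, for which continuity follows from convexity and the Lipschitz estimate on the optimal value; this is why it is the more robust route here.
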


The proof will be presented in the sequel. According to Theorems \ref{prop:dense} and \ref{prop:open},  we have the following conclusions.
\begin{remark}
	\begin{enumerate}
		\item $\boldsymbol{ \mathcal A }^+$ is generic in $\boldsymbol{ \mathcal A }$.
		\item The volume of $\boldsymbol{ \mathcal A }^+$ is positive in $\mathbb S^{n^d}$, namely, that the original problem \eqref{prob:original4} can be solved by the nonconvex ADMM with a theoretical convergence guarantee is not an isolated phenomenon.
		\item It would be more satisfied   if Theorem \ref{prop:dense} is replaced by that $\boldsymbol{ \mathcal A }^+$ is dense in $\mathbb S^{n^d}$; this would be true if $\boldsymbol{ \mathcal A }$ is dense in $\mathbb S^{n^d}$. If this is the case, together with Proposition \ref{prop:open} we can demonstrate that $\boldsymbol{ \mathcal A }^+$ is generic in $\mathbb S^{n^d}$ which confirms the numerical observations. Currently we do not know how to fill this gap, and we leave it as a conjecture. Nevertheless, Theorems \ref{prop:dense} and \ref{prop:open} partly explains   why the nonconvex ADMM is effective in reality.
	\end{enumerate}
\end{remark}

In what follows, we focus on proving Theorem \ref{prop:open}.
First we need the following lemma.
\begin{lemma}\label{prop:diff_value_p}
	Consider the relaxation problem \eqref{prob:relax_convex} rewritten as
	\[	 
	\label{prob:relax_convex_p}
	(R_{\mathcal A})~~\max ~\langle   \mathcal A,\mathcal X\rangle~{\rm s.t.}~       \mathcal X\in C_R\cap    \mathbb S^{n^d}.
	\]
	Denote its optimal value as $V_{R_{\mathcal A}}$. Then it holds that $| V_{R_{\mathcal A}} - V_{R_{\mathcal A+\epsilon\mathcal B}  }  | \leq  \epsilon$ for any $\mathcal A$ and $\mathcal B\in\mathbb S^{n^d}$ with $\|\mathcal B\|_F\leq 1$, where $\epsilon > 0$.
\end{lemma}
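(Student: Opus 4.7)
The plan is to use the standard trick that optimal values of linear optimization problems are Lipschitz in the objective, with the Lipschitz constant controlled by the Frobenius diameter of the feasible set. Let $\mathcal X^*_{\mathcal A}$ and $\mathcal X^*_{\mathcal A+\epsilon\mathcal B}$ be optimizers of $(R_{\mathcal A})$ and $(R_{\mathcal A+\epsilon\mathcal B})$ respectively (these exist since $C_R\cap \mathbb S^{n^d}$ is compact and the objective is linear). Both points are feasible for either problem, so
$$
\langle \mathcal A+\epsilon\mathcal B, \mathcal X^*_{\mathcal A+\epsilon\mathcal B}\rangle \geq \langle \mathcal A+\epsilon\mathcal B, \mathcal X^*_{\mathcal A}\rangle \quad\text{and}\quad \langle \mathcal A, \mathcal X^*_{\mathcal A}\rangle \geq \langle \mathcal A, \mathcal X^*_{\mathcal A+\epsilon\mathcal B}\rangle.
$$
Rearranging gives the two-sided bound
$$
-\epsilon\,\langle \mathcal B, \mathcal X^*_{\mathcal A+\epsilon\mathcal B}\rangle \;\leq\; V_{R_{\mathcal A}} - V_{R_{\mathcal A+\epsilon\mathcal B}} \;\leq\; -\epsilon\,\langle \mathcal B,\mathcal X^*_{\mathcal A}\rangle,
$$
so $|V_{R_{\mathcal A}} - V_{R_{\mathcal A+\epsilon\mathcal B}}| \leq \epsilon \cdot \sup_{\mathcal X\in C_R\cap\mathbb S^{n^d}} |\langle \mathcal B,\mathcal X\rangle|$.

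Next I would bound the inner product by Cauchy--Schwarz: $|\langle \mathcal B,\mathcal X\rangle| \leq \|\mathcal B\|_F\|\mathcal X\|_F \leq \|\mathcal X\|_F$, using the hypothesis $\|\mathcal B\|_F\leq 1$. It then remains to bound $\|\mathcal X\|_F$ uniformly over $C_R$. Since $\|\mathcal X\|_F = \|\boldsymbol{Mat}(\mathcal X)\|_F$, and $M:=\boldsymbol{Mat}(\mathcal X)$ is positive semidefinite with $\mathrm{tr}(M)=1$, writing the eigenvalues of $M$ as $\lambda_1,\ldots,\lambda_{n^{d/2}}\geq 0$ with $\sum_i \lambda_i = 1$ gives
$$
\|M\|_F^2 = \sum_i \lambda_i^2 \;\leq\; \Big(\sum_i \lambda_i\Big)^{\!2} = 1.
$$
Combining these estimates yields $|V_{R_{\mathcal A}} - V_{R_{\mathcal A+\epsilon\mathcal B}}|\leq \epsilon$, which is the claimed bound.

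There is essentially no obstacle here: the only non-routine observation is that the PSD$+$trace-one constraint forces $\|\mathcal X\|_F\leq 1$, which is a standard one-line eigenvalue computation. The argument is symmetric in $\mathcal A$ and $\mathcal A+\epsilon\mathcal B$, and does not use any structural property of the relaxation beyond linearity of the objective and boundedness of the feasible set in Frobenius norm.
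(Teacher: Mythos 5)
Your proof is correct and is essentially the same argument as in the paper: both rely on the linearity of the objective, feasibility of each problem's optimizer for the other problem, and the key estimate $\sup_{\mathcal X\in C_R\cap\mathbb S^{n^d}}|\langle\mathcal B,\mathcal X\rangle|\leq 1$. The only difference is presentational: you make the bound $\|\mathcal X\|_F\leq 1$ (from PSD plus unit trace) explicit, whereas the paper leaves that step implicit when it writes $\max_{\mathcal X}\langle\mathcal B,\mathcal X\rangle\leq 1$ and $\min_{\mathcal X}\langle\mathcal B,\mathcal X\rangle\geq -1$.
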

\begin{proof}
	It follows   from the definition of $\mathcal B$ and the feasible set of $\mathcal X$ that
	\[V_{R_{\mathcal A+\epsilon \mathcal B}  }  = \max_{\mathcal X\in C_R\cap\mathbb S^{n^d}   }\langle \mathcal A+\epsilon \mathcal B,\mathcal X\rangle \leq \max_{\mathcal X\in C_R\cap\mathbb S^{n^d}   }\langle \mathcal A ,\mathcal X\rangle  + \epsilon \max_{\mathcal X\in C_R\cap\mathbb S^{n^d}   }\langle  \mathcal   B,\mathcal X\rangle  \leq V_{R_{\mathcal A}} + \epsilon. \]
	On the other hand, let $\mathcal X_{\mathcal A}$ be optimal to $(R_{\mathcal A})$. Then 
	\[V_{R_{\mathcal A+\epsilon\mathcal B}  }  \geq   \langle \mathcal A+\epsilon\mathcal B,\mathcal X_{\mathcal A}\rangle  = V_{R_{\mathcal A}} +   \epsilon \langle   \mathcal B,\mathcal X_{\mathcal A}\rangle  \geq V_{R_{\mathcal A}} + \epsilon\min_{\mathcal X\in C_R\cap\mathbb S^{n^d}   }\langle \mathcal B,\mathcal X\rangle \geq V_{R_{\mathcal A}}  - \epsilon, \]
	and the assertion follows.
\end{proof}

As there is no duality gap between \eqref{prob:dual1} and \eqref{prob:relax_convex_y}, it   follows $| V_{D_{\mathcal A}} - V_{D_{\mathcal A+\epsilon\mathcal B}  }  | \leq  \epsilon$, where $V_{D_{\mathcal A+\epsilon\mathcal B}}$ denotes the optimal value of the perturbation problem of \eqref{prob:dual1}:
\begin{equation}\label{prob:dual1_p}
(D_{\mathcal A+\epsilon\mathcal B})~~	\min  ~\sigma_{\max}(\boldsymbol{Mat}(\Lambda))~~{\rm s.t.}~\boldsymbol{Sym}(\Lambda)=\mathcal A + \epsilon\mathcal B,~\boldsymbol{ Mat }(\Lambda)\in\mathbb S^{ n^{d/2}\times n^{d/2} },
\end{equation}
with $\mathcal B\in\mathbb S^{n^d}$, $\epsilon>0$.

 Define
\[
\boldsymbol{ \mathcal L}_{\mathcal A } := \{ \Lambda\mid  \boldsymbol{ Sym }(\Lambda) = \mathcal A ,~\boldsymbol{ Mat }(\Lambda)\in\mathbb S^{n^{d/2}\times n^{d/2}}  \}.
\]
Then the optimal solution set of \eqref{prob:dual1_p} can be written as\footnote{The solution set   may not be a singloten. Consider the very simple example: For any normalized $\mathbf x\in\mathbb R^n$, Let $\mathcal A = \mathbf x^{\circ^d}$. Clearly, $\Lambda^* = \mathcal A$ is a solution to \eqref{prob:dual1}. On the other hand, consider the linear subspace $\{\Lambda\mid \boldsymbol{ Mat }(\Lambda)\in\mathbb S^{n^{d/2}\times n^{d/2}},\boldsymbol{ Sym }(\Lambda)=0,\boldsymbol{ Mat }(\Lambda)\mathbf x^{\otimes^{d/2}} =0 \}$, which is nontrivial. Pick any $\Lambda_0\neq 0$ from the above subspace and let $\Lambda^\prime = \Lambda^* + \alpha\frac{ \Lambda
		_0  }{ \|\Lambda_0\|_F  }$. Then $\Lambda^\prime$ is also a minimizer to \eqref{prob:dual1} when $\alpha \leq 1$. }   
\begin{equation}\label{def:s} \setlength\abovedisplayskip{2pt}
\setlength\abovedisplayshortskip{2pt}
\setlength\belowdisplayskip{2pt}
\setlength\belowdisplayshortskip{2pt}
\boldsymbol{ \mathcal S}_{D_{\mathcal A+\epsilon\mathcal B}}  :=  \boldsymbol{ \mathcal L }_{\mathcal A+\epsilon\mathcal B } \cap \{ \Lambda\mid \boldsymbol{ Mat }(\Lambda) \preceq V_{D_{\mathcal A+\epsilon\mathcal B}} \cdot I  \},
\end{equation} 
where $I\in\mathbb S^{n^{d/2}\times n^{d/2}}$ denotes the identity matrix. In particular, $\boldsymbol{ \mathcal S}_{D_{\mathcal A}}$ is the optimal solution set of the unperturbation problem. Define ${\rm dist}(\mathbf x,X)$ as the distance from a point $\mathbf x$ to a set $X$.

\begin{proof}[Proof of Theorem \ref{prop:open}]
Let $\mathcal A\in\boldsymbol{ \mathcal A }^+$; according to Proposition \ref{prob:sec:hypo:1}, it is equivalent to that the dual \eqref{prob:dual1} with respect to $\mathcal A$ has a solution whose matricization admits a simple leading eigenvalue.  Write the solution as $\Lambda_\mathcal A$ with the   leading eigenvalue of $\boldsymbol{ Mat }(\Lambda_{\mathcal A})$    being simple.  It holds that $\Lambda_{\mathcal A} \in \boldsymbol{ \mathcal S}_{D_{
		\mathcal A}}$.

 We first show that 
 \begin{equation}
\label{eq:sec:hypo:1}
\lim_{\epsilon\rightarrow 0}\nolimits{\rm dist}(\Lambda_{\mathcal A}, \boldsymbol{ \mathcal S}_{D_{\mathcal A+\epsilon\mathcal B}}) =0,~ \forall\mathcal B\in\mathbb S^{n^d} {\rm ~with~} \|\mathcal B\|_F\leq 1,
 \end{equation}
 
 Denote $\mathcal I:= \boldsymbol{ Ten }(I)$, where  $I\in\mathbb R^{n^{d/2}\times n^{d/2}}$ denotes the identity matrix,   and let    $\mathcal I_{\boldsymbol{ Sym }}:= \boldsymbol{ Sym }(\mathcal I)$. By changing the variable as $\Lambda:= V_{ D_{\mathcal A+\epsilon\mathcal B}} \cdot\mathcal I - \Lambda$,  $\boldsymbol{ \mathcal S}_{D_{A+\epsilon\mathcal B}} $ is shifted as
\begin{equation*} \setlength\abovedisplayskip{2pt}
\setlength\abovedisplayshortskip{2pt}
\setlength\belowdisplayskip{2pt}
\setlength\belowdisplayshortskip{2pt}
\boldsymbol{ \mathcal S}_{D_{A+\epsilon\mathcal B}}^\prime: = \boldsymbol{ \mathcal L }_{-\mathcal A-\epsilon\mathcal B +V_{ D_{\mathcal A+\epsilon\mathcal B}} \cdot \mathcal I_{\boldsymbol{ Sym }}}\cap  \{\Lambda\mid\boldsymbol{Mat}(\Lambda)\succeq 0  \}.
\end{equation*}
Accordingly, denote $\Lambda^\prime_{\mathcal A}:= V_{D_{\mathcal A}}\cdot \mathcal I - \Lambda_{\mathcal A}$. Then $\Lambda^\prime_{\mathcal A}\in  \boldsymbol{ \mathcal S}_{D_{A}}^\prime$. By Lemma \ref{prop:diff_value_p} and that $\boldsymbol{ \mathcal S}_{D_{A+\epsilon\mathcal B}}^\prime$ is the intersection of an affine subspace and the positive semidefinite cone, we see that for any $\mathcal B$ with $\|\mathcal B\|_F\leq 1$, it holds that
\begin{equation*}\label{eq:sec:hypothesis:1}
\lim_{\epsilon\rightarrow 0}\nolimits {\rm dist}(\Lambda^\prime_{\mathcal A}, \boldsymbol{ \mathcal S}_{D_{A+\epsilon\mathcal B}}^\prime) = {\rm dist}(\Lambda^\prime_{\mathcal A}, \boldsymbol{ \mathcal S}_{D_{A}}^\prime) = 0,
\end{equation*}
implying that \eqref{eq:sec:hypo:1} holds.

As a result, there is a small enough $\epsilon_0>0$ such that for all $\mathcal B$ with $\|\mathcal B\|_F\leq 1$ and all $\epsilon<\epsilon_0$, there holds 
 $\|  \Lambda_{\mathcal A} - \Lambda_{\mathcal A+\epsilon \mathcal B}\|_F <  \frac{\sigma_{\max}(\boldsymbol{ Mat }(\Lambda_{\mathcal A}))   -\sigma_2( \boldsymbol{ Mat }(\Lambda_{\mathcal A} )  ) }{4}   ,$ 
where $\sigma_2(\cdot)$ is the second largest eigenvalue of the matrix. 
   This combining with Weyl's inequality tells us that 
$| \sigma_i( \boldsymbol{ Mat }(\Lambda_{\mathcal A+\epsilon\mathcal B}) )  -\sigma_i(\boldsymbol{ Mat }(\Lambda_{\mathcal A})) | < \frac{\sigma_{\max}(\boldsymbol{ Mat }(\Lambda_{\mathcal A}))   -\sigma_2( \boldsymbol{ Mat }(\Lambda_{\mathcal A} )  ) }{4}, ~\forall~i,$
   which implies that
\[
\sigma_{\max}(\Lambda_{\mathcal A+\epsilon\mathcal B}) > \frac{ 3\sigma_{\max}(\Lambda_{\mathcal A})   +\sigma_2(\Lambda_{\mathcal A}  )    }{4} > \frac{  \sigma_{\max}(\Lambda_{\mathcal A})   +3\sigma_2(\Lambda_{\mathcal A}  )    }{4}>\sigma_2(\Lambda_{\mathcal A+\epsilon\mathcal B}), 
\]
i.e., $\mathcal A+\epsilon\mathcal B\in\boldsymbol{ \mathcal A }^+$.  As a consequence, the open ball $\{\mathcal C\in\mathbb S^{n^d}\mid \|\mathcal A-\mathcal C\|_F < \epsilon_0  \} \subset \boldsymbol{ \mathcal A }^+$, and hence $\boldsymbol{ \mathcal A }^+$ is open in $\mathbb S^{n^d}$. The proof has been completed.
\end{proof}

\subsection{Practical considerations: feasible and $O(\epsilon)$-optimal solution}
Let $\{\mathcal X^k,\mathcal Y^k,\Lambda^k \}$ be generated by the algorithm. According to Theorem \ref{prop:equivalance},  one obtains a rank-1 tensor provided that $\mathcal X^k=\mathcal Y^k$. However, in reality, due to the rounding errors, or the stopping criterion, there may exist a gap between $\mathcal X^k$ and $\mathcal Y^k$, which might result in that neither $\mathcal X^k$ nor $\mathcal Y^k$ is a rank-1 tensor.  Using perturbation analysis, we show that  a normalized rank-1 solution that is close to $\mathcal X^k$ or $\mathcal Y^k$ can be computed in polynomial time. Moreover,  under checkable conditions, such feasible solution is
$O(\epsilon)$-optimal    to the original problem \eqref{prob:original4}. Here we do not assume any prior information between $\{\mathcal X^k,\mathcal Y^k,\Lambda^k \}$ and the optimal solution $\{\mathcal X^*,\mathcal Y^*,\Lambda^* \}$.

Suppose in practice, the algorithm stops at the $k$-th iteration, and the  following has been observed:
\begin{assumption}\label{ass:sec:eps}
	\begin{enumerate}
		\item $\{ \mathcal X^{k},\mathcal Y^{k}, \Lambda^{k}  \}$ is feasible to \eqref{prob:relax3};
		\item $ \|  \{ \mathcal X^{k },\mathcal Y^{k }, \Lambda^{k }  \}  - \{ \mathcal X^{k-1},\mathcal Y^{k-1}, \Lambda^{k-1}  \}\|_F= \epsilon    $, with $\epsilon>0$ sufficiently small.
	\end{enumerate}
\end{assumption} 
Clearly,  the above assumptions are quite natural. In addition, we require that
\begin{assumption}\label{ass:par_sym}
	$\mathcal X^k$ is partially symmetric.
\end{assumption}

Here partial symmetry is defined in $\mathbb R^{n^d}$ as follows.   If for   even   $d$, 
$\mathcal A_{i_1\cdots i_d} = \mathcal A_{ \pi_1 \pi_2} = \mathcal A_{i_{d/2+1}\cdots i_d,i_1\cdots i_{d/2}}$ for any $\pi_1\in \pi(i_1\cdots i_{  d/2 })$ and any $\pi_2\in\pi(i_{  d/2 +1}\cdots i_d)$, and if  for odd order $d$, $\mathcal A_{i_1\cdots i_d} = \mathcal A_{ \pi_1 \pi_2 }  $ for any $\pi_{1}\in \pi(i_1\cdots i_{\lfloor d/2\rfloor})$ and any $\pi_{2}\in\pi(i_{\lfloor d/2\rfloor+1}\cdots i_d)$, then we call $\mathcal A$ partially symmetric. Partially symmetric tensors are denoted as $\mathbb S^{n^d}_P$.
\begin{remark}
	The partial symmetry of $\mathcal X^k$ is easily preserved during the iterates once $\Lambda^0\in\mathbb S_P^{n^d}$ (in particular, $\Lambda^0=\mathcal A$). 
	We only discuss the even order case, while it  is analogous when $d$  is odd. Note that if $\{\Lambda^0,\mathcal Y^0\}\in\mathbb S_P^{n^d}\times \mathbb S^{n^d}$, then
	$-\Lambda^{0} - \tau\mathcal Y^{0} \in\mathbb S_P^{n^d}$, and the  smallest eigenvectors of $\boldsymbol{ Mat }(-\Lambda^{0} - \tau\mathcal Y^{0})$, denoted as $\mathbf x\in\mathbb R^{n^{d/2}}$, satisfies that its tensorization is symmetric\footnote{Here the tensorization of $\mathbf x\in\mathbb R^{n^{d/2}}$ means the tensor $\textsf{ reshape}(\mathbf x,\overbrace{n,\ldots,n}^{d/2})\in\mathbb S^{n^{d/2}}$.}. Since $\boldsymbol{ Mat }(\mathcal X^1) =\mathbf x\mathbf x^\top$, 
	it then holds that $\mathcal X^1\in\mathbb S^{n^{d/2}}_P$, and so $\Lambda^1 = \Lambda^0 - \tau(\mathcal X^1-\mathcal Y^1)\in\mathbb S_P^{n^{d/2}}$.
	Inductively, $\mathcal X^k\in\mathbb S_P^{n^{d}},~\forall k$.
\end{remark}

With the above practical assumptions, the following, which can be seen as a perturbation version of Theorem \ref{prop:equivalance}, is the first main result.
\begin{theorem}\label{th:sec:eps:epsilon_solution}
	Let $d$ be fixed. Let $ \{ \mathcal X^{k},\mathcal Y^{k}, \Lambda^{k}  \}$ satisfy Assumptions \ref{ass:sec:eps} and \ref{ass:par_sym}. Then in polynomial time, one can find a symmetric normalized rank-1 tensor $\overline{\mathcal X}$, namely, $\overline{\mathcal X}$ is feasible to \eqref{prob:original4}, such that $\|\mathcal X^k-\overline{\mathcal X}\|_F = O(\epsilon)$.
\end{theorem}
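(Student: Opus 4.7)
The first step is to turn Assumption \ref{ass:sec:eps}(2) into closeness of the primal variables. From the ADMM update $\Lambda^k = \Lambda^{k-1} - \tau(\mathcal X^k - \mathcal Y^k)$ in \eqref{alg:admm}, one gets $\|\mathcal X^k - \mathcal Y^k\|_F \leq \tau^{-1}\|\Lambda^k - \Lambda^{k-1}\|_F = O(\epsilon)$. Feasibility (Assumption \ref{ass:sec:eps}(1)) gives $\mathcal X^k \in C$, so in the even-$d$ case (the odd case is analogous, using singular-vector pairs in place of eigenvectors) one has $\boldsymbol{Mat}(\mathcal X^k) = \mathbf z\mathbf z^\top$ for some $\mathbf z \in \mathbb R^{n^{d/2}}$ with $\|\mathbf z\| = 1$. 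Assumption \ref{ass:par_sym} (partial symmetry of $\mathcal X^k$) then forces the $(d/2)$-way reshape $\mathcal Z$ of $\mathbf z$ to be a fully symmetric tensor in $\mathbb R^{n^{d/2}}$, while $\mathcal Y^k$ is fully symmetric in $\mathbb S^{n^d}$.

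The technical heart of the argument is to construct, in polynomial time, a normalized $\overline{\mathbf x}\in\mathbb R^n$ with $\|\mathbf z-\overline{\mathbf x}^{\otimes^{d/2}}\|_2 = O(\epsilon)$. My plan is a recursive balanced matricization driven by the closeness of $\mathcal X^k$ to the fully symmetric $\mathcal Y^k$. The key observation is that for any balanced splitting of the $d$ indices into two halves of size $d/2$, the corresponding matricization of $\mathcal X^k$ equals (up to a row/column permutation induced by the splitting) the Kronecker product $\boldsymbol{Mat}(\mathcal Z)\otimes\boldsymbol{Mat}(\mathcal Z)$, whereas the same matricization of $\mathcal Y^k$ coincides with $\boldsymbol{Mat}(\mathcal Y^k)$ under the same permutation, precisely because $\mathcal Y^k$ is fully symmetric. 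Since $\boldsymbol{Mat}(\mathcal Y^k)$ is $O(\epsilon)$-close to $\mathbf z\mathbf z^\top$ and the singular values of a Kronecker product multiply, this forces $\sigma_1(\boldsymbol{Mat}(\mathcal Z)) = 1-O(\epsilon)$ while $\sigma_2(\boldsymbol{Mat}(\mathcal Z)) = O(\epsilon)$. The Davis--Kahan theorem then yields a leading singular-vector pair $(\mathbf u,\mathbf v)$ with $\|\mathcal Z - \mathbf u\circ\mathbf v\|_F = O(\epsilon)$, and a permutation-commutativity argument (analogous to the one in the proof of Theorem \ref{prop:proof:1}) using the $1$-dimensionality of the leading subspace shows that the tensorizations of $\mathbf u$ and $\mathbf v$ are themselves symmetric lower-order tensors, coinciding up to $O(\epsilon)$ with a common symmetric $(d/4)$-tensor on which one recurses. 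Since $d$ is fixed, the recursion terminates in $O(\log d)$ levels, each requiring a single SVD of a matrix of size polynomial in $n$, producing $\overline{\mathbf x}$ in polynomial time with the claimed accuracy.

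Setting $\overline{\mathcal X}:=\overline{\mathbf x}^{\circ^d}$ yields a symmetric rank-one tensor with unit Frobenius norm (and unit matrix trace when $d$ is even), hence feasible to \eqref{prob:original4}. The elementary estimate $\|\mathbf u\mathbf u^\top-\mathbf v\mathbf v^\top\|_F \leq 2\|\mathbf u-\mathbf v\|_2$ for unit vectors (after an appropriate sign choice), combined with $\|\mathbf z - \overline{\mathbf x}^{\otimes^{d/2}}\|_2 = O(\epsilon)$, gives $\|\boldsymbol{Mat}(\mathcal X^k) - \boldsymbol{Mat}(\overline{\mathcal X})\|_F = O(\epsilon)$ and therefore $\|\mathcal X^k - \overline{\mathcal X}\|_F = O(\epsilon)$. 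The main obstacle I anticipate is the careful bookkeeping of the $O(\epsilon)$ error across the recursive steps while maintaining the Davis--Kahan spectral-gap hypothesis at every level; this is controlled by the fact that the leading singular value at each level remains $\geq 1-O(\epsilon)\geq 1/2$ for sufficiently small $\epsilon$, so the gap-based perturbation bounds apply uniformly and the errors compound only by a constant factor depending on $d$.
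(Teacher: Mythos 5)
Your plan is right in outline --- extract $\mathcal Z$ from the exactly rank-one $\boldsymbol{Mat}(\mathcal X^k)=\mathbf z\mathbf z^\top$, use Assumption~\ref{ass:par_sym} to see that $\mathcal Z$ is fully symmetric, and then exploit $\|\mathcal X^k-\mathcal Y^k\|_F=O(\epsilon)$ by comparing two balanced matricizations of $\mathcal X^k$ that must both be $O(\epsilon)$-close to the one common matricization of the fully symmetric $\mathcal Y^k$. But the ``key observation'' driving this is false as stated. The canonical split $(1,\dots,d/2)\mid(d/2+1,\dots,d)$ gives $\mathbf z\mathbf z^\top$, which has rank one; row/column permutations preserve rank, so this cannot equal a permuted $\boldsymbol{Mat}(\mathcal Z)\otimes\boldsymbol{Mat}(\mathcal Z)$ (whose rank is ${\rm rank}(\boldsymbol{Mat}(\mathcal Z))^2$) unless $\mathcal Z$ is \emph{already} rank-one, i.e.\ unless the conclusion you are trying to reach already holds. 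Only one particular interleaved split produces the Kronecker product, and the useful comparison is \emph{between} that split and the canonical one. More fundamentally, whenever $d/2$ is odd --- already for $d=6$, and, after a single recursive step, for almost every $d$ that is not a power of two --- the matrix $\boldsymbol{Mat}(\mathcal Z)$ has shape $n^{\lfloor d/4\rfloor}\times n^{\lceil d/4\rceil}$, so $\boldsymbol{Mat}(\mathcal Z)\otimes\boldsymbol{Mat}(\mathcal Z)$ has size $n^{2\lfloor d/4\rfloor}\times n^{2\lceil d/4\rceil}\neq n^{d/2}\times n^{d/2}$, and no balanced matricization of $\mathcal X^k$ can equal it under any reindexing: the construction does not type-check.

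A square substitute does exist --- the interleaved split yields $\boldsymbol{Mat}(\mathcal Z)^{\top}\otimes\boldsymbol{Mat}(\mathcal Z)$, which is $n^{d/2}\times n^{d/2}$ and is indeed $O(\epsilon)$-close to $\mathbf z\mathbf z^\top$ --- but it kills the recursive claim: the leading singular pair $(\mathbf u,\mathbf v)$ now has $\mathbf u\in\mathbb R^{n^{\lfloor d/4\rfloor}}$ and $\mathbf v\in\mathbb R^{n^{\lceil d/4\rceil}}$ of different lengths, so $\boldsymbol{Ten}(\mathbf u)$ and $\boldsymbol{Ten}(\mathbf v)$ are tensors of different orders and cannot ``coincide up to $O(\epsilon)$ with a common symmetric $(d/4)$-tensor'' as you claim. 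The descent would need to be reorganized (e.g.\ peeling one mode at a time from $\mathcal Z$ via the mode-$1$ unfolding, with a fresh permutation-trace estimate at each level to show the next-level factor is again close to symmetric and again has a nearly rank-one unfolding). The permutation-commutativity argument establishing symmetry of the tensorized singular vectors is asserted but not given, and the statement that the errors ``compound only by a constant factor depending on $d$'' is precisely what has to be proved at every level. In short, the sketch has the right driving idea, but the arithmetic of the splits and of the recursion contains genuine holes, and the odd-$d$ case --- where $\boldsymbol{Mat}(\mathcal X^k)=\mathbf z_1\mathbf z_2^{\top}$ already factors $\mathcal X^k$ into tensors of different orders --- is not merely ``analogous.''
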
 

The proof is left to the supplemental materials.  With the above $\overline{\mathcal X}$ at hand, we then have:
\begin{theorem}\label{th:sec:eps:epsilon_optimal_solution}
	Let $\overline{\mathcal X}$ be given as above. If   $\mathcal X^{k }\in\arg\min_{\mathcal X\in C}\langle -\Lambda^{k-1}-\tau\mathcal Y^{k-1}+\tau\mathcal X^{k },\mathcal X\rangle $, then there holds $V_O\leq \langle-\mathcal A,\overline{\mathcal X}\rangle\leq V_O+O(\epsilon)$, where $V_O$ denotes the optimal value of \eqref{prob:original4}.
\end{theorem}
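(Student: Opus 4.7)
The proof naturally splits into a trivial lower bound and a more delicate upper bound. The lower bound $V_O\le \langle-\mathcal A,\overline{\mathcal X}\rangle$ is immediate: by Theorem \ref{th:sec:eps:epsilon_solution}, $\overline{\mathcal X}$ is symmetric, rank-$1$, and normalized, hence feasible to \eqref{prob:original4}, and $V_O$ is the infimum of $\langle-\mathcal A,\cdot\rangle$ over the feasible set.

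For the upper bound, my plan is to convert the hypothesis into an approximate form of the exact KKT condition \eqref{eq:kkt_nonconvex_reformulation_notau_variant}. Using the multiplier update $\Lambda^k=\Lambda^{k-1}-\tau(\mathcal X^k-\mathcal Y^k)$, a one-line rearrangement gives
$$-\Lambda^{k-1}-\tau\mathcal Y^{k-1}+\tau\mathcal X^k \;=\; -\Lambda^k+\tau(\mathcal Y^k-\mathcal Y^{k-1}).$$
Letting $\mathcal X^*$ be any global optimizer of \eqref{prob:original4} (which is in $C$), the hypothesis $\mathcal X^k\in\arg\min_{\mathcal X\in C}\langle -\Lambda^{k-1}-\tau\mathcal Y^{k-1}+\tau\mathcal X^k,\mathcal X\rangle$ applied with $\mathcal X:=\mathcal X^*$ together with Cauchy--Schwarz, $\|\mathcal X\|_F=1$ on $C$, and the bound $\|\mathcal Y^k-\mathcal Y^{k-1}\|_F\le\epsilon$ from Assumption \ref{ass:sec:eps} yields
$$\langle-\Lambda^k,\mathcal X^k\rangle \;\le\; \langle-\Lambda^k,\mathcal X^*\rangle + 2\tau\epsilon.$$

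Next, I would invoke Remark \ref{rmk:sec:convergence:1} to use $\boldsymbol{Sym}(\Lambda^k)=\mathcal A$ for all $k\ge 1$. Since both $\mathcal X^*$ and $\overline{\mathcal X}$ lie in $\mathbb S^{n^d}$, Proposition \ref{prop:sym_invariance} gives the two key identities
$$\langle-\Lambda^k,\mathcal X^*\rangle=\langle-\mathcal A,\mathcal X^*\rangle=V_O, \qquad \langle-\Lambda^k,\overline{\mathcal X}\rangle=\langle-\mathcal A,\overline{\mathcal X}\rangle.$$
Combining these with the splitting $\langle-\Lambda^k,\overline{\mathcal X}\rangle=\langle-\Lambda^k,\mathcal X^k\rangle+\langle-\Lambda^k,\overline{\mathcal X}-\mathcal X^k\rangle$ and Theorem \ref{th:sec:eps:epsilon_solution}'s estimate $\|\overline{\mathcal X}-\mathcal X^k\|_F=O(\epsilon)$ gives
$$\langle-\mathcal A,\overline{\mathcal X}\rangle \;\le\; V_O + 2\tau\epsilon + \|\Lambda^k\|_F\cdot O(\epsilon).$$

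The main obstacle is the factor $\|\Lambda^k\|_F$ in the last term: to conclude $O(\epsilon)$ I need $\|\Lambda^k\|_F$ to be bounded by a constant independent of $\epsilon$. In the setting of the theorem, where the algorithm is stopped at a fixed iteration $k$, this is harmless since $\Lambda^k$ is a concrete finite object, but it is worth flagging explicitly (alternatively, $\boldsymbol{Sym}(\Lambda^k)=\mathcal A$ bounds the symmetric part, and the asymmetric part is controlled by the accumulated primal residuals which Assumption \ref{ass:sec:eps} keeps under control near termination). With this acknowledgement, the two bounds combine to $V_O\le\langle-\mathcal A,\overline{\mathcal X}\rangle\le V_O+O(\epsilon)$, as desired.
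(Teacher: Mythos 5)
Your proof is correct and follows essentially the same route as the paper's: both reduce the upper bound to the optimality hypothesis $\mathcal X^k\in\arg\min_{\mathcal X\in C}\langle -\Lambda^{k-1}-\tau\mathcal Y^{k-1}+\tau\mathcal X^k,\mathcal X\rangle$ (applied at $\mathcal X^*$), replace $\mathcal A$ by a symmetrically-equivalent $\Lambda$ via Proposition~\ref{prop:sym_invariance} and $\boldsymbol{Sym}(\Lambda^j)=\mathcal A$, and swap $\overline{\mathcal X}$ for $\mathcal X^k$ at an $O(\epsilon)$ cost via Theorem~\ref{th:sec:eps:epsilon_solution} and Assumption~\ref{ass:sec:eps}. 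The only cosmetic difference is that you repackage the linear functional through the multiplier update as $-\Lambda^k+\tau(\mathcal Y^k-\mathcal Y^{k-1})$ rather than keeping $-\Lambda^{k-1}-\tau\mathcal Y^{k-1}+\tau\mathcal X^k$ as the paper does; your explicit remark that the $O(\epsilon)$ constant hides $\|\Lambda^k\|_F$ is a valid observation that the paper leaves implicit.
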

\begin{proof}
	Similar to the proof of Theorem \ref{th:anytau}, the condition means that $\sigma_{n^{d/2}}(-\boldsymbol{ Mat }(\Lambda^{k-1} +\tau\mathcal Y^{k-1}))+\tau$ is the smallest eigenvalue of $-\boldsymbol{ Mat }(\Lambda^{k-1}+\tau\mathcal Y^{k-1}-\tau\mathcal X^{k })$. Let $\mathcal X^*$ be optimal to \eqref{prob:original4}. It   follows from Theorem \ref{th:sec:eps:epsilon_solution} and the conditions that
	\begin{eqnarray*}\setlength\abovedisplayskip{1pt}
		\setlength\abovedisplayshortskip{1pt}
		\setlength\belowdisplayskip{1pt}
		\setlength\belowdisplayshortskip{1pt}
		\langle -\mathcal A,\overline{\mathcal X}\rangle &=& \langle -\Lambda^{k-1},\overline{\mathcal X}\rangle \\
		&=&\langle -\Lambda^{k-1}-\tau\mathcal Y^{k-1}+\tau\mathcal X^{k},\overline{\mathcal X} \rangle+O(\epsilon)\\
		&=& \langle -\Lambda^{k-1}-\tau\mathcal Y^{k-1}+\tau\mathcal X^{k },\mathcal X^{k } \rangle + O(\epsilon)\\
		&\leq&  \langle -\Lambda^{k-1}-\tau\mathcal Y^{k-1}+\tau\mathcal X^{k },\mathcal X^* \rangle + O(\epsilon)\\
		&=&\langle -\Lambda^{k-1},\mathcal X^*\rangle +O(\epsilon)\\
		&=&\langle -\mathcal A,\mathcal X^*\rangle +O(\epsilon) = V_O+O(\epsilon),
	\end{eqnarray*}
	where the first and the fifth equalities are due to Proposition \ref{prop:sym_invariance} and that $\boldsymbol{ Sym }(\Lambda^{k-1})=\mathcal A$ (Remark \ref{rmk:sec:convergence:1}), the second and the fourth equalities come from Assumption \ref{ass:sec:eps}, and the third one follows from Theorem \ref{th:sec:eps:epsilon_solution}; the inequality is due to the optimality of $\mathcal X^{k}$.   On the other hand, it follows from the feasibility of $\overline{\mathcal X}$  to \eqref{prob:original4} that $V_O\leq \langle-\mathcal A,\overline{\mathcal X}\rangle$. The results follow.
\end{proof}

\section{Numerical Experiments}\label{sec:numer}

 All the   computations are conducted on an Intel i7-7770 CPU desktop computer with 32 GB of RAM. The supporting software is Matlab R2015b.  
 Our   code is available online for public use\footnote{\url{https://drive.google.com/drive/folders/1wTrDs-TQfvDERCQLSvP9PtDD8ZUGlDYl}}.
 
 \paragraph{Settings} Unless otherwise specified, the initial guess is $\{\mathcal X^0,\mathcal Y^0,\Lambda^0  \}= \{\boldsymbol{ 0 },\boldsymbol{ 0 },\mathcal A   \}$ where $\boldsymbol{ 0 }$ denote the tensor with each entry being zero; the stopping criterion is 
 $$\max\{ \|\mathcal X^k-\mathcal Y^k\|_F,\|\mathcal X^{k+1}-\mathcal X^k\|_F,\|\mathcal Y^{k+1}-\mathcal Y^k\|_F/\|\mathcal Y^k\|_F  \} \leq \epsilon$$
where $\epsilon=10^{-4}$ or $k\geq 1000$. 
 After $\mathcal X$ is computed, the procedure in the proof of Theorem \ref{th:sec:eps:epsilon_solution} can be used to get the associated eigenvector $\mathbf x$ in polynomial-time.  $\tau$ in the augmented Lagrangian function is the only parameter in the algorithm. To select $\tau$, except those small examples in Section \ref{sec:small}, we first normalize the data tensor $\mathcal A$ such that $\mathcal A = \mathcal A/\|\mathcal A\|_F$, and set $\tau=0.1$ when $d$ is even, and $\tau=0.5$ when $d$ is odd empirically. To compute the $\mathcal X$-subprolems, namely, to compute the leading eigenvalue/singular value of a matrix, we respectively employ the Matlab built-in function \textsf{eigs}, and the function \textsf{lansvd} available in the Matlab package PROPACK\footnote{\url{http://sun.stanford.edu/~rmunk/PROPACK/}}, which are found to be  relatively more efficient and stable, among   others.

\subsection{Small examples} \label{sec:small}

 \begin{example}(\cite[Example 3.6]{kolda2011shifted}, \cite[Example 3.2]{nie2014semidefinite}) Consider
	$\mathcal A\in\mathbb S^{3^3}$ which is given by
	\begin{eqnarray*}
&&\mathcal A_{111} = -0.1281,\mathcal A_{112} = 0.0516,\mathcal A_{113} = -0.0954,\mathcal A_{122} = -0.1958, \mathcal A_{123} = -0.1790,	\\
&&\mathcal A_{133} = -0.2676,\mathcal A_{222} = 0.3251,\mathcal A_{223} = 0.2513,\mathcal A_{233} = 0.1773,\mathcal A_{333} = 0.0338. 
	\end{eqnarray*}
\end{example}
Setting $\epsilon=10^{-5}$ and $\tau=1$, the nonconvex ADMM successfully finds the leading eigenpair $(\sigma_{\max},\mathbf x) = (0.8730, (-0.3921,	 0.7248,	 0.5664))$ of $\mathcal A$ in $0.1355$ seconds using $13$ iterates. $\{\mathcal X^*,\mathcal Y^*,\Lambda^* \}$ returned by the algorithm meets the optimality condition \eqref{eq:kkt_nonconvex_reformulation_notau}, which means that the algorithm automatically  identifies that $0.8730$ is the leading eigenvalue. We have tried other $\tau$. For any $\tau\in \{10^{-3},10^{-2},10^{-1},1,10,10^{2},10^{3}  \}$, we also observe that the algorithm all finds the leading eigenvalue.

\begin{example}(\cite[Example 3.3]{nie2014semidefinite})
	Consider	$\mathcal A\in\mathbb S^{3^3}$ which is given by
\begin{eqnarray*}
&&\mathcal A_{111} = 0.0517,\mathcal A_{112} = 0.3579,\mathcal A_{113} = 0.5298,\mathcal A_{122} = 0.7544,\mathcal A_{123} = 0.2156,\\
&&\mathcal A_{133} = 0.3612,\mathcal A_{222} = 0.3943,\mathcal A_{223} = 0.0146,\mathcal A_{233} = 0.6718,\mathcal A_{333} = 0.9723.
\end{eqnarray*}
\end{example}
Setting $\epsilon=10^{-5}$ and $\tau=1$, the nonconvex ADMM successfully finds the leading eigenpair $(\sigma_{\max},\mathbf x) = (2.1110, (0.5204,	0.5113,	0.6839))$ of $\mathcal A$ in $0.1265$ seconds using $37$ iterates.  We also observe that $\{\mathcal X^*,\mathcal Y^*,\Lambda^* \}$ returned by the algorithm meets \eqref{eq:kkt_nonconvex_reformulation_notau}, i.e., by checking \eqref{eq:kkt_nonconvex_reformulation_notau},  the algorithm automatically  identifies that $\sigma_{\max}$ is the leading eigenvalue. For any $\tau\in \{10^{-3},10^{-2},10^{-1},1,10,10^{2},10^{3}  \}$, we also observe that the  algorithm all finds the leading eigenvalue. 

\begin{example}(\cite[Example 3.1]{jiang2015tensor})
Consider				$\mathcal A\in\mathbb S^{3^4}$ which is given by
	\begin{eqnarray*}
&&\mathcal A_{1111} = 0.2883, \mathcal A_{1112} = -0.0031,\mathcal A_{1113} = 0.1973, \mathcal A_{1122} = -0.2485,\mathcal A_{1123} = -0.2939,\\
&&\mathcal A_{1133} = 0.3847, \mathcal A_{1222} = 0.2972, \mathcal A_{1223} = 0.1862, \mathcal A_{1233} = 0.0919,\mathcal A_{1333} = -0.3619,\\
&&\mathcal A_{2222} = 0.1241, \mathcal A_{2223} = -0.3420, \mathcal A_{2233} = 0.2127, \mathcal A_{2333} = 0.2727, \mathcal A_{3333} = -0.3054.
	\end{eqnarray*}
\end{example}
Setting $\epsilon=10^{-5}$ and $\tau=0.1$, the nonconvex ADMM successfully finds the leading eigenpair $(\sigma_{\max},\mathbf x) = (0.8893, (-0.6672,	-0.2471,	0.7027))$ of $\mathcal A$ in $0.1633$ seconds using $64$ iterates. We also observe that $\{\mathcal X^*,\mathcal Y^*,\Lambda^* \}$ returned by the algorithm meets \eqref{eq:kkt_nonconvex_reformulation_notau}. For $\tau\in \{10^{-3},10^{-2},10^{-1} \}$, the algorithm finds the leading eigenvalue; for $\tau\in \{1,10,10^2,10^3  \}$, $0.8169$ is returned, which is still an eigenvalue but not the leading one.

Setting $\tau=0.1$, the algorithm can also successfully find the leading eigenpair $(1.0954, (0.5915,   -0.7467,   -0.3043))$ of $ -\mathcal A $ in $0.2738$ seconds using $182$ iterates; $\{\mathcal X^*,\mathcal Y^*,\Lambda^* \}$ returned by the algorithm meets \eqref{eq:kkt_nonconvex_reformulation_notau}. For any $\tau\in \{10^{-3},10^{-2},10^{-1},1$, $10,10^{2},10^{3}  \}$, we also observe that the algorithm all finds the leading eigenvalue.

\begin{example}(\cite[Example 3.2]{jiang2015tensor})\label{ex:6.4}
			$\mathcal A\in\mathbb S^{3^4}$ which is given by
			\begin{small}
\begin{eqnarray*}
&&\mathcal A_{1111} = 0.74694, \mathcal A_{1112} =-0.435103, \mathcal A_{1122} = 0.454945, \mathcal A_{1222}=0.0657818,\mathcal A_{2222}=1,\\
&&\mathcal A_{1113}=0.37089, \mathcal A_{1123} = -0.29883, \mathcal A_{1223} = -0.795157, \mathcal A_{2223}=0.139751, \mathcal A_{1133}=1.24733,\\
&&\mathcal A_{1233}=0.714359, \mathcal A_{2233}=0.316264, \mathcal A_{1333}=-0.397391, \mathcal A_{2223}=-0.405544, \mathcal A_{3333}= 0.794869, \\
&&\mathcal A = \boldsymbol{ Sym }(\mathcal A).
\end{eqnarray*}
\end{small}
\end{example}
Setting $\epsilon=10^{-5}$ and $\tau=0.1$, the nonconvex ADMM successfully finds the leading eigenpair $(\sigma_{\max},\mathbf x) = (1.0031, (-0.0116,   -0.9992,   -0.0382))$ of $\mathcal A$ in $0.1961$ seconds using $68$ iterates. $\{\mathcal X^*,\mathcal Y^*,\Lambda^* \}$ returned by the algorithm meets \eqref{eq:kkt_nonconvex_reformulation_notau}. 
 For any $\tau\in \{10^{-3},10^{-2},10^{-1},1,10,10^{2},10^{3}  \}$, we also observe that the algorithm all finds the leading eigenvalue.

Setting $\tau=0.1$, the algorithm can also   find  the leading eigenpair \begin{small}$(\sigma_{\max},\mathbf x) = (-0.3837, (-0.4360 ,  -0.5954 ,  -0.6748))$\end{small} of   $ -\mathcal A$ in $0.1485$ seconds using $67$ iterates. Although $-0.3837$ is the leading eigenvalue which is verified by the SDP relaxation, $\{\mathcal X^*,\mathcal Y^*,\Lambda^* \}$ meets \eqref{eq:kkt_nonconvex_reformulation} instead of \eqref{eq:kkt_nonconvex_reformulation_notau}. This confirms iterm 2 of the discussions right after Theorem \ref{th:convergence_admm_even}.
For other $\tau$, the results are listed in Table \ref{tab:effect_tau}.

\begin{example}(\cite[Example 3.8]{nie2014semidefinite})
	$\mathcal A\in\mathbb S^{3^6}$ which is associated with the following polynomial
	$$ \setlength\abovedisplayskip{2pt}
	\setlength\abovedisplayshortskip{2pt}
	\setlength\belowdisplayskip{2pt}
	\setlength\belowdisplayshortskip{2pt} f(\mathbf x) = 2\|\mathbf x\|^6 - ( \mathbf x_1^4\mathbf x_2^2 +\mathbf x_1^2\mathbf x_2^4 +\mathbf x_3^6 - 3\mathbf x_1^2\mathbf x_2^2\mathbf x_3^2 ).$$
\end{example}
Normalizing such that $\|\mathcal A\|_F=1$, setting $\epsilon=10^{-5}$ and $\tau=0.1$, the nonconvex ADMM successfully finds the leading eigenpair $(\sigma_{\max},\mathbf x) = (2, (0,   1,   0))$ of $\mathcal A$ in $0.0548$ seconds using $29$ iterates. If setting $\tau=0.5$, then it will find $(2, (0,   1,   0))$ which is still global.  For any $\tau\in \{10^{-2},10^{-1},1,10,10^{2},10^{3}  \}$, we also observe that the algorithm all finds the leading eigenvalue. However, the algorithm does not converge when $\tau=10^{-3}$. This may because there does not exist $\Lambda^*$ satisfying the hypothesis of Theorem \ref{th:convergence_admm_even} (using SDP relaxation  starting from different initial points, we always observe that the leading eigenvalue of the resulting $\boldsymbol{ Mat }(\Lambda^*)$ is not simple).

For the same $\tau$, the algorithm can also   find  the leading eigenpair $(\sigma_{\max},\mathbf x) = (-1, (0 ,  0 ,  1))$ of   $ -\mathcal A$ in $0.2568$ seconds using $47$ iterates. For any $\tau\in \{10^{-3},10^{-2},10^{-1}$, $1,10,10^{2},10^{3}  \}$, we also observe that the algorithm all finds the leading eigenvalue.
\paragraph{Influence  of $\tau$}
  We illustrate the results   with $\tau$ varying from $10^{-3}$ to $10^3$ in details, where $\epsilon=10^{-8}$. The   tensor is $-\mathcal A$ where $\mathcal A$ is defined in Example \ref{ex:6.4}, with results shown in Table \ref{tab:effect_tau}. Varying from $10^{-3}$ to $0.5$, the algorithm finds the leading eigenvalue of $-\mathcal A$. Varying from $1$ to $10^3$, the algorithm gets $-0.3904$ which is not the leading one. Besides $-0.3837$ and $-0.3904$, no other eigenvalues have been found by the algorithm no matter what $\tau$ is chosen. This shows that even if the nonconvex ADMM cannot find the global solution, it can still find a high-quality one. We can also observe that the algorithm is more efficient with a reasonable $\tau$.
 
\begin{table}[htbp] \setlength\abovedisplayskip{2pt}
	\setlength\abovedisplayshortskip{2pt}
	\setlength\belowdisplayskip{2pt}
	\setlength\belowdisplayshortskip{2pt}
	\centering
	\caption{Eigenvalue of $-\mathcal A$ where $\mathcal A$ is defined in Example \ref{ex:6.4} with different $\tau$}
	\begin{mytabular}{ccccccccc}
		\toprule
		$\tau$ & $10^{-3}$ & $10^{-2}$ & $10^{-1}$ & $0.5$  & $1$ & $10$ & $10^2$ & $10^3$ \\
		\midrule
		$V$     & -0.3837  & -0.3837  & -0.3837  & -0.3837  & -0.3904  & -0.3904  & -0.3904  & -0.3904  \\
		Iter. & 5502  & 550   & 96    & 99    & 46    & 147    & 1033   & 6940  \\
		\bottomrule
	\end{mytabular}%
	\label{tab:effect_tau}%
\end{table}%

\paragraph{Summary} For all the examples, we find that  the algorithm: 1) converges with most   $\tau>0$; 2) converges even if the hypothesis on $\Lambda^*$ cannot be met;   3) converges to the global solution efficiently, when $\tau$ lies in a certain range; 4) satisfies \eqref{eq:kkt_nonconvex_reformulation_notau}, namely, $\sigma_{\max}$ of $\mathcal A$ is also the leading eigenvalue of $\boldsymbol{ Mat }(\Lambda^*)$, when $\tau$ lies in a certain range; 5) if $\tau$ is chosen larger, then the algorithm might converge to other eigenvalues, but the solution quality is still good.

\subsection{Structured tensors}
Four classes of large-scale structured tensors are considered in this section. The first class is the Hilbert tensors  \cite{song2014infinite}, which is a generalization of the Hilbert matrix. The other three classes follow those of \cite{nie2014semidefinite}.

\paragraph{Hilbert tensors} The Hilbert tensor  $\mathcal A\in\mathbb S^{n^d}$   is defined by   \cite{song2014infinite}
\begin{equation}\label{tensor:hilbert}\small  \setlength\abovedisplayskip{2pt}
\setlength\abovedisplayshortskip{2pt}
\setlength\belowdisplayskip{2pt}
\setlength\belowdisplayshortskip{2pt}
\mathcal A(i_1,\ldots,i_d) = \frac{1}{i_1+\cdots + i_d-d+1}.
\end{equation}
\paragraph{Structured tensors defined by  logarithm functions} (\cite[Example 3.7]{nie2014semidefinite})     
\begin{equation}\label{tensor:log}\small  \setlength\abovedisplayskip{2pt}
\setlength\abovedisplayshortskip{2pt}
\setlength\belowdisplayskip{2pt}
\setlength\belowdisplayshortskip{2pt}
\mathcal A(i_1,\ldots,i_d) = (-1)^{i_1}\ln (i_1) + \cdots + (-1)^{i_d}\ln(i_d),
\end{equation}
\paragraph{Structured tensors defined by arctangent functions} (\cite[Example 3.6]{nie2014semidefinite}) 
\begin{equation}\label{tensor:atan}\small  \setlength\abovedisplayskip{2pt}
\setlength\abovedisplayshortskip{2pt}
\setlength\belowdisplayskip{2pt}
\setlength\belowdisplayshortskip{2pt}
\mathcal A(i_1,\ldots,i_d) =    \arctan \left(  (-1)^{i_1} \frac{i_1}{n} \right) + \cdots +  \arctan \left(  (-1)^{i_d} \frac{i_d}{n} \right),
\end{equation}
\paragraph{Structured tensors defined by fraction functions} (\cite[Example 3.5]{nie2014semidefinite}) 
\begin{equation}\label{tensor:fraction}\small  \setlength\abovedisplayskip{2pt}
\setlength\abovedisplayshortskip{2pt}
\setlength\belowdisplayskip{2pt}
\setlength\belowdisplayshortskip{2pt}
\mathcal A(i_1,\ldots,i_d) = \frac{(-1)^{i_1}}{i_1} + \cdots + \frac{(-1)^{i_d}}{i_d}.
\end{equation}

\begin{table*}[h] \footnotesize
		\renewcommand\arraystretch{0.7}
		
	\begin{floatrow}
		\capbtabbox{
			
			\setlength{\tabcolsep}{0.3mm}			
			
			\begin{tabular}{cc|cc|cccc}
				\toprule
				&       & \multicolumn{2}{c}{SDP \cite{nie2014semidefinite}} & \multicolumn{4}{c}{Algorithm \ref{alg:admm}} \\
				\cmidrule{3-4}  \cmidrule{5-8}  $d$     & $n$     & Time  & $V$     & Time  & Iter. & $V$     & Opt? \\
				\midrule
				& 20    & 2.12  & 4.18  & 0.15  & 24    & 4.18  & Y \\
				& 40    & 45.17  & 5.81  & 0.20  & 24    & 5.81  & Y \\
				& 60    & 565.34  & 7.06  & 0.56  & 24    & 7.06  & Y \\
				3     & 80    & 4503.37  & 8.12  & 0.84  & 24    & 8.12  & Y \\
				& 200   & -    & -    & 8.90  & 24    & 12.76  & Y \\
				& 300   & -    & -    & 47.51  & 24    & 15.60  & Y \\
				& 400   & -    & -    & 73.93  & 24    & 18.00  & Y \\
				& 500   &   -   &    - &   96.87    &    24   &    20.11   & Y \\
				\midrule
					&10 & 1.60 &	6.53 &	0.26 &	82 &	6.53 &	Y\\	
				& 20    & 11.87  & 12.51  & 5.28  & 71    & 12.51  & Y \\
				& 40    & 176.73  & 24.50  & 35.20  & 69    & 24.50  & Y \\
				    4 & 60    & 1000.18  & 36.50  & 118.75  & 66    & 36.50  & Y \\
				& 80    & 6205.97  & 48.50  & 264.83  & 66    & 48.50  & Y \\
			 	&90   & -    &-    & 304.55 & 63    & 54.50  & Y \\
				& 100   & -    & -    & 396.50  & 65    & 60.50  & Y \\
				 	& 120   & -     & -     & 607.93 & 63    & 72.50  & Y \\
				\midrule
				& 5&	1.23 &	6.11 &	0.22 &	24 &	6.11 &	Y\\				
			5	& 10    & 24.47  & 15.75  & 1.35  & 23    & 15.75  & Y \\
				     & 20    & 925.58  & 42.61  & 10.01  & 23    & 42.61  & Y \\
				& 30    & -     & -     & 42.43  & 23    & 77.16  & Y \\
				\midrule
				& 5     & 1.14  & 11.14  & 0.83  & 77    & 11.14  & Y \\
				6     & 10    & 29.21  & 40.43  & 8.43  & 71    & 40.43  & Y \\
				& 15    & 183.10  & 88.23  & 80.14  & 69    & 88.23  & Y \\
				& 20    & 771.84  & 154.53  & 306.71  & 68    & 154.53  & Y \\
				\bottomrule
			\end{tabular}%

		}{
			\caption{Hilbert tensors \eqref{tensor:hilbert}.}
			\label{tab:hilbert}
		}
		\capbtabbox{
			
			\setlength{\tabcolsep}{0.3mm}

			\begin{tabular}{cc|cc|cccc}
				\toprule
				&       & \multicolumn{2}{c}{SDP \cite{nie2014semidefinite}} & \multicolumn{4}{c}{Algorithm \ref{alg:admm}} \\
				\cmidrule{3-4}\cmidrule{5-8}  $d$&  $n$     & Time  & $V$     & Time  & Iter. & $V$     & Opt? \\
				\midrule
				&    20    & 2.55  & 246.19  & 0.17  & 22    & 246.19  & Y \\
				&  40    & 72.58  & 865.65  & 0.32  & 22    & 865.65  & Y \\
				&60    & 555.58  & 1782.79  & 1.11  & 23    & 1782.79  & Y \\
				3&    80    & 4053.95  & 2960.46  & 2.03  & 23    & 2960.46  & Y \\
				&  200   & -     & -     & 15.09  & 24    & 14501.86  & U \\
				&300   & -     & -    & 65.42  & 25    & 28970.18  & U \\
				&400   & -    & -    & 90.30  & 26    & 47166.83  & U \\
				&500   & -    & -    & 205.08  & 27    & 68710.81  & U \\
				\midrule	
					&	10	&1.21 &	248.30 &	0.29 &	90 &	248.30 &	Y\\		
				&20    & 2.94  & 1253.38  & 4.00  & 88    & 1253.38  & Y \\
				&40    & 56.04  & 6193.99  & 22.41  & 86    & 6193.99  & Y \\
				4&60    & 519.35  & 15592.50  & 90.37  & 86    & 15592.50  & Y \\
				&80    & 2970.55  & 29869.49  & 247.34  & 85    & 29869.49  & Y \\
				& 90 & - & - & 340.92 & 	85  & 	38932.76	&Y\\
				&100   & -    & -    & 451.32  & 85    & 49320.50  & Y \\
				& 120 & -& - & 705.07 &	83 & 	74174.21 &	Y\\
				\midrule
				&5&	0.63 &	110.01 &	0.20& 	26& 	110.01 &	Y\\				
			5	&10    & 4.60  & 883.28  & 4.55  & 25    & 883.28  & Y \\
				&20    & 912.35  & 6236.72  & 26.06  & 25    & 6236.72  & Y \\
				&30    & -    & -     & 134.09  & 25    & 19438.60  & U \\
				\midrule
				&5     & 2.14  & 164.46  & 0.92  & 64    & 164.46  & Y \\
				6&10    & 29.89  & 3086.58  & 5.74  & 80    & 3086.58  & Y \\
				&15    & 192.46  & 9642.00  & 55.99  & 71    & 9642.00  & Y \\
				&20    & 1019.70  & 30528.86  & 394.02  & 77    & 30528.86  & Y \\
				\bottomrule
			\end{tabular}%

		}{
			\caption{Structured tensors with log. functions \eqref{tensor:log}.}
			\label{tab:log}
		}
	\end{floatrow}
\end{table*}

\paragraph{Settings} The order $d$ varies from $3$ to $6$. Depending on $d$,  the dimension varies from $5$ to $500$. The method based on SDP relaxation is used as a baseline for comparisons. In particular, we employ that of Nie and Wang \cite{nie2014semidefinite} as the baseline, because their method is based on SDPNAL \cite{zhao2010newton}, which is  usually faster than the implement of \cite{jiang2015tensor}. {Here we remark that, as the method of \cite{nie2014semidefinite} is proposed to find a best rank-1 approximation, when $d$ is even, it will solve both  $\max_{\|\mathbf x\|=1} \langle \mathcal A,\mathbf x^{\circ^d}\rangle$ and $\min_{\|\mathbf x\|=1}\langle \mathcal A,\mathbf x^{\circ^d}\rangle$. In view of this, to give a fair comparison, when $d$ is even, we have modified the code of \cite{nie2014semidefinite} so that it only solves $\max_{\|\mathbf x\|=1} \langle \mathcal A,\mathbf x^{\circ^d}\rangle$.  } In the cases that $d=3$ and $n\geq 200$, $d=4$ and $n=100$, $d=5$ and $n=30$, SDP is too time-consuming, and we do not run it on the corresponding instances, where we mark the results as `-'.
For our algorithm, 
all the settings follow those introduced in the beginning of this section. 

\paragraph{Remarks on the tables} The results are illustrated in Tables \ref{tab:hilbert}, \ref{tab:log}, \ref{tab:atan} and \ref{tab:fraction}, respectively. All the tables have the same format. The first two columns stand for order and dimension, respectively; the next two columns refer to the CPU time and the objective value returned by the SDP. The unit of time is second. The five to the seven columns stand for the CPU time, iterates and the objective value returned by Algorithm \ref{alg:admm}. The last column denotes whether our approach can find the optimal solution. Here by optimal solution, we mean that, if it is equal to that found by SDP, or it satisfies the optimality condition \eqref{eq:kkt_nonconvex_reformulation}, then we mark it as `Y'; if it is  not equal to the one found by SDP, then we mark it as `N'. Another case is that when SDP is not available, and \eqref{eq:kkt_nonconvex_reformulation} is not met, which means that we are unclear whether the solution is optimal or not. In this case, we mark it as `U(nclear)'.

\begin{table*}\footnotesize
			\renewcommand\arraystretch{0.7}
	\begin{floatrow}
		\capbtabbox{

			\setlength{\tabcolsep}{0.3mm}			
			\begin{tabular}{cc|cc|cccc}
				\toprule
				&       & \multicolumn{2}{c}{SDP \cite{nie2014semidefinite}} & \multicolumn{4}{c}{Algorithm \ref{alg:admm}} \\
				\cmidrule{3-8}    $d$     & $n$     & Time  & $V$     & Time  & Iter. & $V$     & Opt? \\
				\midrule
				& 20    & 2.00  & 55.59  & 0.16  & 22    & 55.59  & Y \\
				& 40    & 43.83  & 150.98  & 0.22  & 22    & 150.98  & Y \\
				& 60    & 319.15  & 273.52  & 0.35  & 22    & 273.52  & Y \\
				3     & 80    & 3308.25  & 418.14  & 0.74  & 23    & 418.14  & Y \\
				& 200   &-    &-    & 15.65  & 24    & 1631.65  & U \\
				& 300   & -     & -    & 51.35  & 25    & 2988.88  & U \\
				& 400   & -     & -    & 113.03  & 26    & 4595.01  & U \\
				& 500   & -     & -    & 229.93  & 26    & 6416.13  & U \\
				\midrule	
				&10	&0.78 &	77.07 &	0.28 &	90 &	77.07 &	Y\\					
				& 20    & 2.56  & 282.97  & 1.20  & 88    & 282.97  & Y \\
					& 40    & 68.24  & 1080.77  & 13.94  & 86    & 1080.77  & Y \\
				4& 60    & 883.72  & 2393.25  & 60.46  & 86    & 2393.25  & Y \\
				& 80    & 8738.37  & 4220.42  & 254.23  & 86    & 4220.42  & Y \\
				& 90 & - & -& 390.39	& 85 &	5327.02 & Y\\
				& 100   & -    & -    & 467.00  & 85    & 6562.28  & Y \\
				& 120 & -& - & 767.75 &	86 &	9418.83 & Y\\
				\midrule
				&5&	1.25 &	60.37 &	0.32 &	27 &	60.37 &	Y	\\			
				5	& 10    & 9.17  & 273.40  & 0.93  & 25    & 273.40  & Y \\
				& 20    & 556.23  & 1407.83  & 7.39  & 25    & 1407.83  & Y \\
				& 30    & -     & -     & 59.73  & 25    & 3751.26  & U \\
				\midrule
				& 5     & 0.71  & 70.66  & 0.70  & 60    & 70.66  & Y \\
				6     & 10    & 64.98  & 953.06  & 10.44  & 79    & 953.06  & Y \\
				& 15    & 296.97  & 2385.36  & 80.52  & 69    & 2385.36  & Y \\
				& 20    & 1364.63  & 6890.50  & 395.45  & 77    & 6890.50  & Y \\
				\bottomrule
			\end{tabular}%

		}{
			\caption{Structured tensors with arctan. functions \eqref{tensor:atan}.}
			\label{tab:atan}
		}
		\capbtabbox{
			
			\setlength{\tabcolsep}{0.3mm}

			\begin{tabular}{cc|cc|cccc}
				\toprule
				&	& \multicolumn{2}{c}{SDP \cite{nie2014semidefinite}} & \multicolumn{4}{c}{Algorithm \ref{alg:admm}} \\
				\cmidrule{3-8} $d$&   $n$     & Time  & $V$     & Time  & Iter. & $V$     & Opt? \\
				\midrule
         & 20    & 2.55  & 34.16  & 0.24  & 24    & 34.16  & Y \\
          & 40    & 45.20  & 65.93  & 0.23  & 24    & 65.93  & Y \\
          & 60    & 625.85  & 97.18  & 1.22  & 25    & 97.18  & Y \\
    3     & 80    & 2629.70  & 128.17  & 0.93  & 24    & 128.17  & Y \\
          & 200   & -   & -    & 9.91  & 25    & 311.80  & U \\
          & 300   & -    & -    & 28.56  & 25    & 463.50  & U \\
          & 400   & -    & -    & 74.29  & 24    & 614.64  & U \\
          & 500   & -    & -     & 141.63  & 24    & 765.41  & U \\
          \midrule
          & 10    & 1.49  & 37.35  & 0.37  & 53    & 37.35  & Y \\
          & 20    & 4.28  & 117.77  & 0.76  & 56    & 117.77  & Y \\
          & 40    & 85.18  & 358.59  & 8.07  & 59    & 358.59  & Y \\
    4     & 60    & 907.87  & 679.86  & 45.64  & 60    & 679.86  & Y \\
          & 80    & 2730.08  & 1066.19  & 123.07  & 60    & 1066.19  & Y \\
          & 90    &   -    &  -    & 131.97  & 61    & 1280.83  & Y \\
          & 100   & -    &-     & 192.51  & 61    & 1508.65  & Y \\
          & 120   &   -   &  -     & 398.78  & 61    & 2001.25  & Y \\
          \midrule
          & 5     & 0.94  & 70.76  & 0.42  & 26    & 70.76  & Y \\
    5     & 10    & 8.38  & 239.94  & 0.77  & 25    & 239.94  & Y \\
          & 20    & 545.40  & 901.64  & 16.54  & 25    & 901.64  & Y \\
          & 30    & -    & -     & 129.61  & 25    & 1964.78  & U \\
\midrule
         & 5     & 3.15  & 46.93  & 1.00  & 42    & 46.93  & Y \\
    6     & 10    & 11.13  & 404.76  & 5.96  & 56    & 404.76  & Y \\
          & 15    & 133.89  & 1174.69  & 55.38  & 57    & 1174.69  & Y \\
          & 20    & 673.92  & 2632.34  & 361.83  & 61    & 2632.34  & Y \\
				\bottomrule
			\end{tabular}%

		}{
			\caption{Structured tensors with  fraction functions \eqref{tensor:fraction}.}
			\label{tab:fraction}
		}
	\end{floatrow}
\end{table*}

\paragraph{Discussions on the results} 
Concerning the solutions, we can see that when SDP is available,  the proposed method always returns the same objective value as that of SDP. For Hilbert tensors listed in Table \ref{tab:hilbert}, when SDP is not available, namely, when SDP is too time-consuming, our method can still find the optimal solutions, which have been identified by using \eqref{eq:kkt_nonconvex_reformulation}. In fact, for Hilbert tensors, all the results meet \eqref{eq:kkt_nonconvex_reformulation_notau}. For the other three classes of tensors, when $d$ is even, our method can find the optimal solutions as well, and in fact, all the results meet \eqref{eq:kkt_nonconvex_reformulation_notau}; when $d$ is odd, it is not sure whether the results are global or not, as \eqref{eq:kkt_nonconvex_reformulation} is not met, for which we mark `U'. Nevertheless, since when $d$ is odd and when the size is not so large, our method all have found the optimal solutions, it can be indicated that when the size is large, the returned solutions may still be optimal. 

Concerning CPU time, we can see that the proposed method has a significant improvement, compared with the SDP relaxation. This is not surprising, as our method is based on only computing the leading eigenvalue/singular value of a matrix, while SDP relaxation relies on full/partial EVD, which is more time-consuming. With this advantage, we can see that the proposed method is more scalable and more efficiencient; e.g., when for Hilbert tensors of $d=3$ and $n=500$, the algorithm returns the results within 100 seconds.

Concerning the iterates, we can observe that our method always requires less than 100 iterates to reach the stopping criterion. This also helps to improve   efficiency. On the other hand, it is also interesting to see that the method is stable for the four classes of tensors, in that when  $d$ is fixed, the number of iterates does not vary a lot when $n$ increases. This helps in the scalability of the method. Of course, such a feature does not always hold, as can be seen from Table \ref{tab:randn}.

The influence of $\tau$ on a Hilbert tensor of order $4$ dimension $5$ is shown in Table \ref{tab:effect_tau2}, where $\epsilon=10^{-8}$. For all $\tau$ except $10^3$, the algorithm all finds the global solution, where for $\tau=10^3$, if setting $\epsilon=10^{-10}$, then the value is also $3.5432$. Moreover, for all   $\tau$ except $10^{3}$, the results meet the optimality condition \eqref{eq:kkt_nonconvex_reformulation_notau}.
\begin{table}[htbp]
	\centering
	\caption{Eigenvalue of a Hilbert tensor $\mathcal A\in\mathbb S^{5^4}$ with different $\tau$}
	\begin{mytabular}{ccccccccc}
		\toprule
		$\tau$ & $10^{-3}$ & $10^{-2}$ & $10^{-1}$ & $0.5$  & $1$ & $10$ & $10^2$ & $10^3$ \\
		\midrule
		$V$     & 3.5432  & 3.5432 & 3.5432  & 3.5432 & 3.5432  & 3.5432  & 3.5432   & 3.5431 \\
		Iter. & 22787  & 2286   & 236   & 50    & 23    & 74    & 126   & 28  \\
		\bottomrule
	\end{mytabular}%
	\label{tab:effect_tau2}%
\end{table}%

\subsection{Randomly generated tensors}


\begin{table}[h]\footnotesize
	\renewcommand\arraystretch{0.61}
	\centering
	\caption{Randomly nonnegative and sparse tensors with sparsity level $0.9$}
	\begin{tabular}{cc|cc|ccccc}
		\toprule
		&       & \multicolumn{2}{c}{SDP \cite{nie2014semidefinite}} & \multicolumn{5}{c}{Algorithm \ref{alg:admm}} \\
		\cmidrule{3-9}    d     & n     & Time  & V     & Time  & Iter. & V     & \#Opt & \#Meet \eqref{eq:kkt_nonconvex_reformulation_notau}\\
		\midrule
		& 20    & 9.90  & 6.82  & 0.50  & 49    & 6.82  & 50  & 50 \\
		& 40    & 151.80  & 18.20  & 1.61  & 45    & 18.20  & 50  & 50 \\
		& 60    & 512.57  & 33.10  & 2.19  & 43    & 33.10  & 50 & 50  \\
		3     & 80    & 2924.45  & 50.46  & 3.02  & 42    & 50.46  & 50  & 50 \\
		& 200   & -     &-     & 13.84  & 40    & 199.44  & 50 & 50  \\
		& 300   & -     & -    & 46.67  & 39    & 364.48  & 50  & 50 \\
		& 400   & -   & -    & 100.84  & 38    & 559.69  & 50  & 50 \\
		& 500   & -    & -     & 190.60  & 37    & 780.92  & 50  & 50 \\
		\midrule
		& 10&	2.82 &	6.78 &	2.51 &	49 &	6.78 &	50 & 50 \\    
		& 20    & 10.94  & 24.74  & 1.63  & 40    & 24.74  & 50  & 50 \\
		4     & 40    & 41.39  & 96.89  & 3.32  & 35    & 96.89  & 50  & 50 \\
		& 60    & 387.27  & 216.80  & 19.24  & 33    & 216.80  & 50 & 50  \\
		& 80    & - &-  & 59.67  & 32    & 386.98  & 50  & 50 \\
		& 100   & -    & -    & 398.75  & 31    & 597.36  & 50  & 50 \\
		\midrule
		& 5     & 2.28  & 4.26  & 0.44  & 32    & 4.26  & 50  & 19 \\
		5     & 10    & 205.88  & 18.17  & 1.49  & 21    & 18.17  & 50  & 50 \\
		& 20    & 827.38  & 97.83  & 6.33  & 20    & 97.83  & 50  & 50 \\
		& 30    &-     & -     & 41.67  & 19    & 157.01  & 50  & 50 \\
		\midrule
		& 5     & 1.48  & 7.54  & 2.92  & 67    & 7.54  & 50 & 50  \\
		6     & 10    & 21.05  & 47.81  & 8.07  & 57    & 47.81  & 50  & 50 \\
		& 15    & 149.63  & 172.69  & 64.47  & 57    & 172.69  & 50  & 50 \\
		& 20    & 408.20  & 204.13  & 262.63  & 34    & 204.13  & 50 & 50  \\
		\bottomrule
	\end{tabular}%
	\label{tab:nonnegative_sparse_tensor}%
\end{table}%

Two classes of randomly generated tensors are considered. The first one is the class of sparse and nonnegative symmetric tensors, where the entries are uniformly drawn from $[0,1]$ and symmetrized at first,   $90\%$ of which are set zero then.  It is known that a hypergraph is corresponding to a nonnegative symmetric tensor; see e.g., \cite{qi2017tensor}, and each entry represents an edge of the hypergraph. In reality, the number of edges is often very small, resulting in that the associated tensor is very sparse.    The entries of the second class of tensors are firstly drawn from the Gaussian distribution, and then the tensors are symmetrized. For each $d$ and each $n$, we run $50$ instances. 

\paragraph{Remarks on the tables}  The format is similar to the previous tables, whereas the results are presented as averages over $m$ instances for each $d$ and $n$. $m=50$ for all except   some cases that SDP relaxation cannot return   reasonable results (the optimal value is over $10^{30}$), which may be due to that the SDP solver experiences numerical troubles, as noted in \cite[p. 16]{nie2014semidefinite}. For these cases, we only average the reasonable results.
Note that in Tables \ref{tab:nonnegative_sparse_tensor} and \ref{tab:randn}, the 8th column counts the times that the proposed method finds the optimal solutions; in Table \ref{tab:nonnegative_sparse_tensor}, the last column represents the times that the proposed method identifies the optimal solutions using \eqref{eq:kkt_nonconvex_reformulation_notau}; in Table \ref{tab:randn}, the last column stands for the averaged ratio of the objective value  returned by our method to that of SDP, i.e., the ratio is given by $\sum^m_{i=1} \frac{V_{ours,i}}{V_{SDP,i}}/m$.

\begin{table}[h]\footnotesize
	\renewcommand\arraystretch{0.61}
	\centering
	\caption{Randomly tensors with   Gaussian distribution and symmetrization.}
	\begin{tabular}{cc|cc|ccccc}
		\toprule
		&       & \multicolumn{2}{c}{SDP \cite{nie2014semidefinite}} & \multicolumn{5}{c}{Algorithm \ref{alg:admm}} \\
		\cmidrule{3-9}    d     & n     & Time  & V     & Time  & Iter. & V     & \#Opt/$m$ & Ratio $\sum^m_{i=1} \frac{V_{ours,i}}{V_{SDP,i}}/m$\\
		\midrule
          & 20    & 3.60  & 6.95  & 0.49  & 99.20  & 6.85  & {39/50} & 98.50\% \\
          & 25    & 7.96  & 7.80  & 0.76  & 160.20  & 7.63  &  {31/50} & 97.76\% \\
          & 30    & 17.85  & 8.68  & 0.83  & 174.70  & 8.45  &  {25/50} & 97.38\% \\
    3     & 35    & 20.68  & 9.35  & 0.63  & 247.10  & 9.12  &  {22/50} & 97.54\% \\
          & 40    & 77.65  & 10.11  & 1.38  & 280.84  & 9.83  &  {23/50} & 97.21\% \\
          & 45    & 237.85  & 10.82  & 2.78  & 235.73  & 10.54  &  {21/50} & 97.41\% \\
          \midrule
          & 10    & 0.97  & 4.90  & 0.42  & 46.38  & 4.85  &  {40/50} & 98.94\% \\
          & 15    & 1.80  & 6.17  & 0.47  & 38.22  & 6.10  &  {38/50} & 98.91\% \\
    4     & 20    & 3.45  & 7.38  & 0.78  & 53.90  & 7.31  &  {39/50} & 99.06\% \\
          & 25    & 10.52  & 8.32  & 2.13  & 66.22  & 8.17  &  {36/50} & 98.23\% \\
          & 30    & 29.32  & 9.33  & 7.25  & 135.96  & 9.05  &  {25/47} & 97.07\% \\
          & 35    & 63.57  & 10.17  & 12.01  & 137.50  & 9.88  &  {16/32} & 97.11\% \\
          \midrule
          & 5     & 0.80  & 3.43  & 0.71  & 217.88  & 3.34  &  {39/50} & 97.04\% \\
    5     & 10    & 4.12  & 5.43  & 1.03  & 119.26  & 5.12  &  {27/50} & 94.15\% \\
          & 15    & 61.84  & 6.75  & 13.74  & 201.42  & 6.39  & 15/50 & 94.76\% \\
          & 20    & 640.68  & 7.92  & 82.96  & 344.92  & 7.17  &  {9/50} & 90.57\% \\
          \midrule
          & 5     & 0.43  & 3.45  & 0.27  & 37.04  & 3.44  & 48/50 & 99.86\% \\
    6     & 10    & 5.40  & 5.36  & 10.41  & 117.14  & 5.31  & 40/50 & 99.05\% \\
          & 15    & 100.73  & 6.93  & 153.34  & 166.85  & 6.56  & 18/49 & 94.75\% \\
		\bottomrule
	\end{tabular}%
	\label{tab:randn}%
\end{table}%

\paragraph{Discussions on the results}
From Table \ref{tab:nonnegative_sparse_tensor}, we see that for sparse and nonnegative tensors,  our method is still quite effective, which can be seen from the 8th column. On the other hand, the last column shows that for all but the case $d=5$ and $n=5$, the results returned by our method meet the optimality condition \ref{eq:kkt_nonconvex_reformulation_notau}, indicating that our method can identify the results to be optimal without relying on SDP in most cases.   The method is still efficient and scalable; except  $d=6$ and $n=5$, all other cases show that our method is faster, which is much more evident when $d=3,4$ and $n$ is large. It can also be observed that the iterates are still stable.

For the second class  of tensors, we first note that when   $(d,n)=(4,30),(4,35),(6,15)$, the SDP solver experiences numerical troubles for some instances, and the results are unreasonable. In view of this, we do not take the related instances into consideration. From Table \ref{tab:randn}, we see that not all of the global solutions of the cases can be found by our method. We can observe that when $n$ is small,     about     $80\%$ percent of the global solution of the instances can be found by our method, which can be seen from the 8th column; when $n$ increases, the percentage gradually decreases. The reason may   because that for such unstructured tensors, the gap between the largest eigenvalue/singular value and the second largest one of $\boldsymbol{ Mat }(\Lambda^*)$ is small, resulting in that it is hard to find a global solution. Nevertheless, we have observed that for all such instances, the algorithm still converges to an eigenpair, whose solution quality is still high; this can be seen from the last column, which shows that for most cases, the ratio $\sum^m_{i=1}\frac{V_{ours,i}}{V_{SDP,i}}$ is larger than $95\%$, and is close to $99\%$ when $n$ is small. Concerning   efficiency, our method is still faster except when $(d,n)=(6,10),(6,15)$, where   the efficiency is decreased by computing the symmetrization. On the iterates, unlike the other classes of tensors, now the iterates gradually increase as $n$ becomes large.

To see the influence of $\tau$, we randomly generate   $\mathcal A\in\mathbb S^{5^4}$ of the second class of tensors, where the results are illustrated in Table \ref{tab:effect_tau3}. Here $2.4775$ is the largest eigenvalue of $\mathcal A$. The results are similar to those of Table \ref{tab:effect_tau}.
\begin{table}[htbp]
	\centering
	\caption{Eigenvalue of a randomly generated $\mathcal A\in\mathbb S^{5^4}$ with different $\tau$}
	\begin{mytabular}{ccccccccc}
		\toprule
		$\tau$ & $10^{-3}$ & $10^{-2}$ & $10^{-1}$ & $0.5$  & $1$ & $10$ & $10^2$ & $10^3$ \\
		\midrule
		$V$     & 2.4775  & 2.4775 & 2.4775  & 2.4609 & 2.4609  & 2.4609  & 2.4609   & 2.4609  \\
		Iter. & 17614  & 2352   & 130   & 62    & 122    & 776    & 4209   & 19925  \\
		\bottomrule
	\end{mytabular}%
	\label{tab:effect_tau3}%
\end{table}%

\paragraph{Summary} From all the experiments especially Tables \ref{tab:hilbert}-\ref{tab:randn}, we have observed that compared with SDP relaxation, the proposed approach is  more efficient and more   scalable, due to that each iterate involves only computing the   leading eigenvalue/singular value of a certain matrix. When $n$ is large and $d\leq 5$,  the proposed method has a significant improvement considering the efficiency.
On the other hand,  the approach is effective to find the optimal solutions in most cases, especially for structured tensors. Among the cases that our method finds the optimal solutions, some of which satisfy the optimality condition \eqref{eq:kkt_nonconvex_reformulation_notau}, namely, the method automatically identifies the optimal solutions. Even if the method cannot find the global optimizer, the solution is still of high quality. In a reasonable range of $\tau$, the algorithm performs well    concerning both iterates and the solution quality, which means that it may not be very hard to choose a good $\tau$. 

In view of Theorem \ref{th:sec:eps:epsilon_optimal_solution}, it is possible to stop the algorithm earlier, e.g.,, setting $\epsilon=10^{-2}$, and then apply a more efficient local search method to   converge to the optimal solution, so as to further accelerate the method.

 \section{Concluding Remarks}\label{sec:conc}
 To tackle the problem of maximizing a homogeneous multivariate polynomial over the unit sphere, a nonconvex approach from the tensor perspective, whose goal is to be more efficient and more scalable than SDP relaxation, and to keep the effectiveness of SDP as much as possible, has been proposed in this work. The approach is built upon a nonconvex matrix program with a matrix rank-1 constraint, which is  equivalent to the original problem by revealing an equivalence property between rank-1 symmetric tensors and matrices of any order. A nonconvex ADMM is then developed to directly solve the   matrix program, whose theoretically computational complexity is linear to the input tensor. Although being nonconvex, the algorithm is proved to converge to a global optimizer under certain reasonable hypothesis. Numerical experiments on different classes of tensors demonstrate the efficiency, scalability,  and effectiveness in most cases, especially in structured tensors.
 
 Several questions remain, and several potential improvements can be made: 
 
 1) For what kinds of tensors can the hypothesis on $\Lambda^*$ of Theorem \ref{th:convergence_admm_even} hold. As discussed in Sect. \ref{sec:conv} and \ref{sec:uniqueness}, this also provides a sufficiency for the tightness of the convex relaxation.

  2) How to further improve the effectiveness on unstructured tensors. 
  
  3) The efficiency and scalability may be further improved by using more efficient algorithms for finding the leading matrix eigenvalue/singular values, e.g., using randomized algorithms. 
  
  4) We only use a vanilla ADMM, while recent advances in splitting methods may be applied. 
  
  5) While this work only focuses on symmetric tensors, it is possible to design similar approaches for finding the leading eigenvalue/singular value of partially symmetric/nonsymmetric tensors, and for copositive tensor detection, etc.
  
   6) It has been mentioned in Sect. \ref{sec:conv} that \eqref{prob:relax3} is an instance of the problem: $\min_{A\mathbf x+B\mathbf y=0} f(\mathbf x) + g(\mathbf y)$, where $f$ is nonconvex and nonsmooth, and $g$ is nonconvex and smooth, where   ${\rm Im}(B)\subset {\rm Im}(A)$ (a reverse relation in contrast to those in the literature).  How to prove the convergence of splitting methods for this kind of general problems?
  
  These will be our  further work.
  

   \bibliography{nonconvex,tensor,bib_tensor}
\bibliographystyle{plain}

  \includepdf[pages=1-]{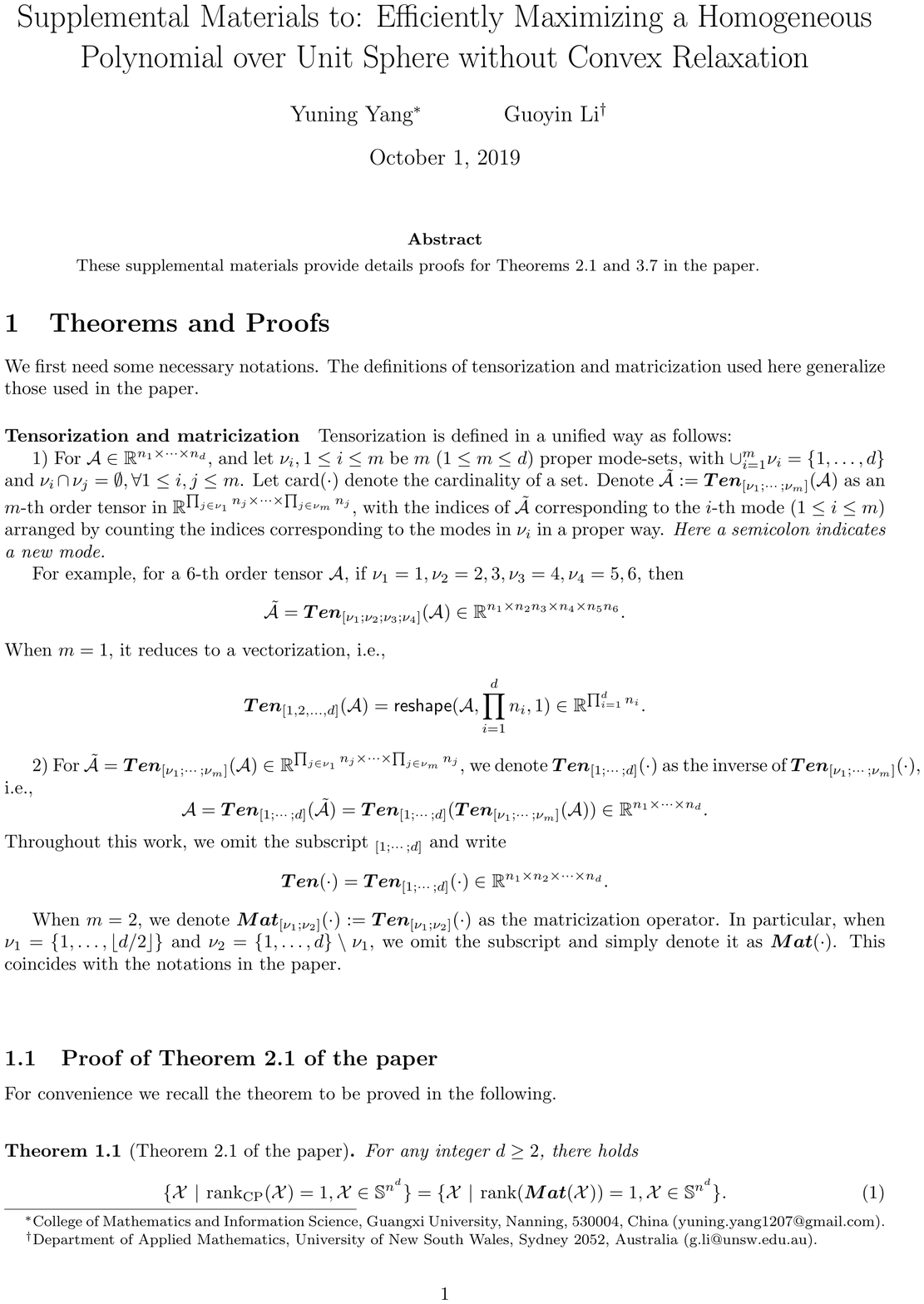}

\end{document}